\newtheorem{theorem}{Theorem}[section]
\newtheorem{corollary}[theorem]{Corollary}
\newtheorem{lemma}[theorem]{Lemma}
\newtheorem{proposition}[theorem]{Proposition}
\newtheorem{remark}[theorem]{Remark}
\numberwithin{equation}{section}
\newenvironment{proof}[1][Proof]{\noindent \textbf{#1.} }{\  \rule{0.5em}{0.5em}}
\begin{document}

\title{Smooth maps with singularities of bounded $\mathcal{K}$-codimensions
\thanks{2000 \textit{Mathematics Subject Classification.} Primary 58K30;
Secondary 57R45, 58A20} \thanks{Key Words and Phrases: smooth map,
singularity, homotopy principle}}
\author{Yoshifumi ANDO \thanks{This research was partially supported by Grand-in-Aid
for Scientific Research (No. 16540072).}}
\date{}
\maketitle

\begin{abstract}
Let $N$ and $P$ be smooth manifolds of dimensions $n$ and $p$ respectively
such that $n \geqq p\geqq2$ or $n<p$. Let $\mathcal{O}_{\ell}(N,P)$ denote a
$\mathcal{K}$-invarinat open subspace of $J^{\infty}(N,P)$ which consists of
all regular jets and singular jets $z$ with codim$\mathcal{K}z \leqq\ell$
(including fold jets if $n \geqq p$). An $\mathcal{O}_{\ell}$-regular map
$f:N\rightarrow P$ refers to a smooth map such that $j^{\infty}f(N)\subset
\mathcal{O}_{\ell}(N,P)$. We will prove that a continuous section $s$ of
$\mathcal{O}_{\ell}(N,P)$ over $N$ has an $\mathcal{O}_{\ell}$-regular map $f$
such that $s$ and $j^{\infty}f$ are homotopic as sections. We next study the
filtration of the group of homotopy self-equivalences of a manifold $P$ which
is constructed by the sets of $\mathcal{O}_{\ell}$-regular homotopy
self-equivalences for nonnegative integers $\ell$.

\end{abstract}

\section{Introduction}

Let $N$ and $P$ be smooth ($C^{\infty}$) manifolds of dimensions $n$ and $p$
respectively. Let $J^{k}(N,P)$ denote the $k$-jet space of the manifolds $N$
and $P$ with the projections $\pi_{N}^{k}$ and $\pi_{P}^{k}$ onto $N$ and $P$
mapping a jet onto its source and target respectively. The canonical fiber is
the $k$-jet space $J^{k}(n,p)$ of $C^{\infty}$-map germs $(\mathbb{R}%
^{n},0)\rightarrow(\mathbb{R}^{p},0)$. Let $\mathcal{K}$ denote the contact
group defined in [MaIII]. Let $\mathcal{O}(n,p)$ denote a $\mathcal{K}%
$-invariant nonempty open subset of $J^{k}(n,p)$ and let $\mathcal{O}(N,P)$
denote an open subbundle of $J^{k}(N,P)$\ associated to $\mathcal{O}(n,p)$. In
this paper a smooth map $f:N\rightarrow P$ is called an $\mathcal{O}%
$-\textit{regular map} if $j^{k}f(N)\subset\mathcal{O}(N,P)$.

We will study what is called the homotopy principle for $\mathcal{O}$-regular
maps. As for the long history of the several types of homotopy principles and
their applications we refer to the Smale-Hirsch Immersion Theorem ([Sm] and
[H]), the Feit $k$-mersion Theorem ([F]), the Phillips Submersion Theorem
([P]) and the general theorems due to Gromov ([G1]) and du Plessis ([duP1],
[duP2] and [duP3]). Furthermore, we should refer to the homotopy principle on
the $1$-jet level for fold-maps due to \`{E}lia\v{s}berg ([E1] and [E2]) (see
further references in [G2]).

Let $C_{\mathcal{O}}^{\infty}(N,P)$ denote the space consisting of all
$\mathcal{O}$-regular maps, $N\rightarrow P$ equipped with the $C^{\infty}%
$-topology. Let $\Gamma_{\mathcal{O}}(N,P)$ denote the space consisting of all
continuous sections of the fiber bundle $\pi_{N}^{k}|\mathcal{O}%
(N,P):\mathcal{O}(N,P)\rightarrow N$ equipped with the compact-open topology.
Then there exists a continuous map $j_{\mathcal{O}}:C_{\mathcal{O}}^{\infty
}(N,P)\rightarrow\Gamma_{\mathcal{O}}(N,P)$ defined by $j_{\mathcal{O}%
}(f)=j^{k}f$. If the following property (h-P) holds, then we say in this paper
that the \emph{relative homotopy principle on the existence level} holds for
$\mathcal{O}$-regular maps.

(h-P) Let $C$ be a closed subset of $N$ with $\partial N=\varnothing$. Let $s$
be a section in $\Gamma_{\mathcal{O}}(N,P)$ which has an $\mathcal{O}$-regular
map $g$ defined on a neighborhood of $C$ to $P$, where $j^{k}g=s$. Then there
exists an $\mathcal{O}$-regular map $f:N\rightarrow P$ such that $s$ and
$j^{k}f$\ are homotopic relative to a neighborhood of $C$\ by a homotopy
$s_{\lambda}$ in $\Gamma_{\mathcal{O}}(N,P)$ with $s_{0}=s$ and $s_{1}=j^{k}f$.

As important applications of [An7, Theorem 0.1] we will prove the following
relative homotopy principles in (h-P). Here, $\Sigma^{n-p+1,0}(n,p)$ refers to
\ the space consisting of all fold jets in $J^{k}(n,p)$.

\begin{theorem}
Let $n$ and $p$ be positive integers with $n\geqq p\geqq2$ or $n<p$. Let $k$
be a positive integer with $k\geqq n-|n-p|+2$. Let $\mathcal{O}(n,p)$ denote a
$\mathcal{K}$-invariant open subspace of $J^{k}(n,p)$ containing all regular
jets such that if $n\geqq p\geqq2$, then $\mathcal{O}(n,p)$ contains
$\Sigma^{n-p+1,0}(n,p)$ at least. Let $N$ and $P$ be connected smooth
manifolds of dimensions $n$ and $p$ respectively with $\partial N=\emptyset$.
Let $C$ be a closed subset of $N$. Let $s$ be a section in $\Gamma
_{\mathcal{O}}(N,P)$ which has an $\mathcal{O}$-regular map $g$ defined on a
neighborhood of $C$ to $P$, where $j^{k}g=s$.

Then there exists an $\mathcal{O}$-regular map $f:N\rightarrow P$ such that
$j^{k}f$ is homotopic to $s$ relative to a neighborhood of $C$ as sections in
$\Gamma_{\mathcal{O}}(N,P)$.
\end{theorem}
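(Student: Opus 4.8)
The plan is to reduce the statement to [An7, Theorem 0.1] by moving the data from the $k$-jet level, where they are given, up to the $\infty$-jet level, where the cited homotopy principle operates, applying that result there, and then projecting the resulting holonomic section back down. No geometry beyond [An7, Theorem 0.1] is needed; the work is the translation together with a little relative bookkeeping.

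Let $\pi_{k}:J^{\infty}(N,P)\to J^{k}(N,P)$ be the natural forgetful projection and put $\mathcal{O}^{\infty}(n,p)=\pi_{k}^{-1}(\mathcal{O}(n,p))$, with associated open subbundle $\mathcal{O}^{\infty}(N,P)=\pi_{k}^{-1}(\mathcal{O}(N,P))\subset J^{\infty}(N,P)$. Since $\mathcal{O}(n,p)$ is $\mathcal{K}$-invariant and open, so is $\mathcal{O}^{\infty}(n,p)$, and a smooth map $f:N\to P$ has $j^{\infty}f(N)\subset\mathcal{O}^{\infty}(N,P)$ if and only if $j^{k}f(N)\subset\mathcal{O}(N,P)$, so $\mathcal{O}^{\infty}$-regularity and $\mathcal{O}$-regularity coincide. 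The fibres of $\pi_{k}$ are contractible affine spaces, so $\pi_{k}$ admits sections; consequently the given $s\in\Gamma_{\mathcal{O}}(N,P)$ lifts to some $\widetilde{s}\in\Gamma_{\mathcal{O}^{\infty}}(N,P)$, and since $C$ is closed and the $\mathcal{O}$-regular map $g$ with $j^{k}g=s$ is defined near $C$, we may choose the lift so that $\widetilde{s}=j^{\infty}g$ on a neighbourhood of $C$; in particular $\widetilde{s}$ is holonomic there.

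Next I would check that $\mathcal{O}^{\infty}(n,p)$ satisfies the hypotheses of [An7, Theorem 0.1]. The dimension condition $n\geqq p\geqq 2$ or $n<p$ is assumed. As $\mathcal{O}(n,p)$ contains every regular $k$-jet, $\mathcal{O}^{\infty}(n,p)$ contains every regular $\infty$-jet. When $n\geqq p\geqq 2$, fold jets are $2$-determined, so the hypothesis $\mathcal{O}(n,p)\supset\Sigma^{n-p+1,0}(n,p)$, together with $k\geqq n-|n-p|+2$, forces $\mathcal{O}^{\infty}(n,p)$ to contain the fold stratum of $J^{\infty}(n,p)$; when $n<p$ no fold hypothesis is needed and this case is simpler. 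Applying [An7, Theorem 0.1] to $\widetilde{s}$, which is already holonomic near $C$, yields an $\mathcal{O}^{\infty}$-regular map $f:N\to P$ together with a homotopy in $\Gamma_{\mathcal{O}^{\infty}}(N,P)$ from $\widetilde{s}$ to $j^{\infty}f$ that is stationary on a (possibly smaller) neighbourhood of $C$. Composing that homotopy with $\pi_{k}$ produces an $\mathcal{O}$-regular map $f$ and a homotopy in $\Gamma_{\mathcal{O}}(N,P)$ from $\pi_{k}\circ\widetilde{s}=s$ to $\pi_{k}\circ j^{\infty}f=j^{k}f$ relative to a neighbourhood of $C$, which is the assertion.

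I expect the real content — the reason this is billed as an application rather than a restatement — to sit in the verification that [An7, Theorem 0.1] genuinely applies under the single requirement that $\mathcal{O}(n,p)$ contain the fold jets when $n\geqq p$: this is exactly the input consumed by the folding construction of [An7] (of Eliashberg--Gromov type), which removes the obstruction to holonomy over handles of index exceeding $p$ — equivalently over the top cells when $N$ is closed — by manufacturing fold singularities inside the allowed set $\mathcal{O}^{\infty}$. The remaining delicate item is the relative form over $C$: if [An7, Theorem 0.1] is already stated relative to a closed subset one is finished, and otherwise one must rerun its argument on a handle decomposition of $N$ built on a closed neighbourhood of $C$, leaving $\widetilde{s}=j^{\infty}g$ untouched there throughout — the bound $k\geqq n-|n-p|+2$ being precisely what supplies the jet order that argument requires. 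In every case the transverse approximations and the fold surgeries are imported wholesale from [An7], and what is added here is only the passage through $\pi_{k}$ and this bookkeeping.
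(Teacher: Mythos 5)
There is a genuine gap, and it stems from a misreading of what [An7, Theorem~0.1] actually requires. In the paper that theorem (reproduced as Theorem~3.4) is stated at the \emph{finite} $k$-jet level, for $k\geqq n-|n-p|+2$, so the entire apparatus you build of lifting $s$ to $\widetilde s\in\Gamma_{\mathcal{O}^{\infty}}(N,P)$, applying the result at the $\infty$-jet level, and projecting back down is both unnecessary and beside the point. More importantly, the hypothesis of [An7, Theorem~0.1] is not the triple ``$\mathcal{K}$-invariant, open, contains regular and fold jets'' --- that triple is the hypothesis of Theorem~1.1, the thing you are asked to prove. The hypothesis actually consumed by [An7, Theorem~0.1] is that $\mathcal{O}(n,p)$ be \emph{admissible}: it must decompose into regular jets together with finitely many disjoint $\mathcal{K}$-invariant submanifolds $V^{i}(n,p)$ satisfying (H-i)--(H-v), in particular the projection property (H-iii) asking for $\mathcal{K}$-invariant models $V^{i}(n,p)^{(k-1)}\subset J^{k-1}(n,p)$ with $V^{i}(n,p)$ open in $(\pi_{k-1}^{k})^{-1}(V^{i}(n,p)^{(k-1)})$, and the kernel-transversality property (H-v) of display (3.2). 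Your ``verification'' checks only (H-iv) (fold containment) and the regular-jet containment, and says nothing about how to produce the stratification or why (H-iii) and (H-v) hold.

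That verification is the whole content of the paper's proof, and it is not cosmetic. The paper takes the Boardman stratification $V_{J}(n,p)=\mathcal{O}(n,p)\cap\Sigma^{J}(n,p)$; $\mathcal{K}$-invariance of each stratum is Lemma~2.3; openness of $\mathcal{O}(n,p)\cap\Omega^{J}(n,p)$ gives (H-ii) via Lemma~2.2; Proposition~2.1 combined with $k\geqq n-|n-p|+2$ forces any stratum of codimension $\leqq n$ to have symbol of the form $(j_{1},\ldots,j_{k-2},0,0)$, which is exactly what lets one set $J^{\ast}=(j_{1},\ldots,j_{k-2},0)$ and apply Lemma~3.5 to get (H-iii); and (H-v) is imported from Boardman via Theorem~3.3. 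None of this appears in your argument, so the step ``Applying [An7, Theorem~0.1]\ldots yields an $\mathcal{O}^{\infty}$-regular map $f$'' is simply not licensed: you have not shown the cited theorem applies. The proposal as written reduces Theorem~1.1 to itself.
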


Let $\rho$\ be an integer with $\rho\geqq1$. Let $W_{\rho}^{k}$ denote the
subset consisting of all $z\in J^{k}(n,p)$ such that the codimension of
$\mathcal{K}z$ in $J^{k}(n,p)$ is not less than $\rho$ ($k$ may be $\infty
$).\ Let $\mathcal{O}_{\ell}^{k}(n,p)$ denote a $\mathcal{K}$-invariant
nonempty open subset of $J^{k}(n,p)\backslash W_{\ell+1}^{k}$. By applying
Theorem 1.1 we will prove the following theorem.

\begin{theorem}
Let $\ell$\ be a positive integer. Let $k\geqq\max\{\ell+1,n-|n-p|+2\}$ or
$k=\infty$. Let $\mathcal{O}_{\ell}^{k}(n,p)$ denote a $\mathcal{K}$-invariant
open subspace of $J^{k}(n,p)$ containing all regular jets such that if $n\geqq
p\geqq2$, then $\mathcal{O}_{\ell}^{k}(n,p)$ contains $\Sigma^{n-p+1,0}(n,p)$
at least. Then the relative homotopy principle in (h-P) holds for
$\mathcal{O}_{\ell}^{k}$-regular maps.
\end{theorem}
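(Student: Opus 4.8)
The plan is to deduce Theorem 1.2 from Theorem 1.1 by reducing the $J^{\infty}$ problem to a finite jet level. Set $k_{0}=\max\{\ell+1,\,n-|n-p|+2\}$. When $k$ is finite (so $k\geqq k_{0}$) there is nothing to do but quote Theorem 1.1: by hypothesis $\mathcal{O}_{\ell}^{k}(n,p)$ is a $\mathcal{K}$-invariant open subset of $J^{k}(n,p)$ containing all regular jets and, when $n\geqq p\geqq 2$, the fold jets $\Sigma^{n-p+1,0}(n,p)$, while $k\geqq n-|n-p|+2$; hence Theorem 1.1 gives (h-P) verbatim. The substance is therefore the case $k=\infty$, to which the rest of the plan is devoted.

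For $k=\infty$, write $\pi$ for the truncation $J^{\infty}(N,P)\to J^{k_{0}}(N,P)$ and, by abuse, also for the induced map of fibers $J^{\infty}(n,p)\to J^{k_{0}}(n,p)$. The heart of the proof is the identity
\[
\mathcal{O}_{\ell}^{\infty}(n,p)=\pi^{-1}\bigl(\mathcal{O}_{\ell}^{k_{0}}(n,p)\bigr),\qquad \mathcal{O}_{\ell}^{k_{0}}(n,p):=\pi\bigl(\mathcal{O}_{\ell}^{\infty}(n,p)\bigr).
\]
I would establish it via Mather's finite determinacy theorem in the form: a jet $z$ with $\mathrm{codim}\,\mathcal{K}z\leqq\ell$ is $(\ell+1)$-$\mathcal{K}$-determined, hence $k_{0}$-$\mathcal{K}$-determined. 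Since $\mathcal{O}_{\ell}^{\infty}(n,p)\subset J^{\infty}(n,p)\setminus W_{\ell+1}^{\infty}$, every $z$ in it has $\mathrm{codim}\,\mathcal{K}z\leqq\ell$, so $\pi^{-1}(\pi(z))\subset\mathcal{K}z$; combining this with the surjectivity of $\mathcal{K}\to\mathcal{K}^{k_{0}}$ one gets $\mathcal{K}z=\pi^{-1}(\mathcal{K}^{k_{0}}\pi(z))$, i.e.\ every $\mathcal{K}$-orbit meeting $\mathcal{O}_{\ell}^{\infty}(n,p)$ is a full $\pi$-preimage, and the identity follows by taking unions. Consequently $\mathcal{O}_{\ell}^{k_{0}}(n,p)$ is open ($\pi$ being a fiber-bundle projection, hence an open map, and $\mathcal{O}_{\ell}^{\infty}(n,p)$ being open), $\mathcal{K}$-invariant, and contains all regular $k_{0}$-jets and, when $n\geqq p\geqq 2$, the fold jets $\Sigma^{n-p+1,0}(n,p)$ (these being determined already at level $2\leqq k_{0}$); since also $k_{0}\geqq n-|n-p|+2$, Theorem 1.1 applies to $\mathcal{O}_{\ell}^{k_{0}}$.

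Applying Theorem 1.1 to the truncated data $s_{k_{0}}:=\pi\circ s$ (which agrees with $j^{k_{0}}g$ near $C$, where $g$ is $\mathcal{O}_{\ell}^{k_{0}}$-regular there) produces an $\mathcal{O}_{\ell}^{k_{0}}$-regular map $f\colon N\to P$ and an open $V\supset C$ with $j^{k_{0}}f$ homotopic to $s_{k_{0}}$ relative to $V$. In particular $j^{k_{0}}f=s_{k_{0}}=j^{k_{0}}g$ on $V$, hence $f=g$ on $V$, hence $j^{\infty}f=s$ on $V$; and since $j^{k_{0}}f(N)\subset\mathcal{O}_{\ell}^{k_{0}}(N,P)$ and $\mathcal{O}_{\ell}^{\infty}(N,P)=\pi^{-1}(\mathcal{O}_{\ell}^{k_{0}}(N,P))$, the map $f$ is $\mathcal{O}_{\ell}^{\infty}$-regular. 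Finally, I would lift the homotopy to level $\infty$: the fibers of $J^{\infty}(N,P)\to J^{k_{0}}(N,P)$ are contractible affine spaces, so the forgetful map $\Gamma_{\mathcal{O}_{\ell}^{\infty}}(N,P)\to\Gamma_{\mathcal{O}_{\ell}^{k_{0}}}(N,P)$, $\sigma\mapsto\pi\circ\sigma$, has contractible fibers (spaces of sections of affine bundles over $N$), and the same holds after restricting both sides to sections equal to $s$, resp.\ $s_{k_{0}}$, on a fixed closed neighborhood of $C$ inside $V$; hence these forgetful maps are weak homotopy equivalences, and the relative homotopy $j^{k_{0}}f\simeq s_{k_{0}}$ downstairs lifts to a relative homotopy $j^{\infty}f\simeq s$ upstairs in $\Gamma_{\mathcal{O}_{\ell}^{\infty}}(N,P)$. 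This is exactly (h-P) for $\mathcal{O}_{\ell}^{\infty}$-regular maps.

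The main obstacle is the identity $\mathcal{O}_{\ell}^{\infty}(n,p)=\pi^{-1}(\mathcal{O}_{\ell}^{k_{0}}(n,p))$: it hinges on knowing both that $\mathcal{K}$-codimension $\leqq\ell$ forces $(\ell+1)$-$\mathcal{K}$-determinacy and that orbit codimensions in $J^{k}(n,p)$ have stabilized once $k\geqq\ell+1$, so that the hypothesis $\mathrm{codim}\,\mathcal{K}z\leqq\ell$ means the same thing at levels $k_{0}$ and $\infty$. Once that is in place, the openness of $\mathcal{O}_{\ell}^{k_{0}}(n,p)$, the affine-fiber argument for the section spaces, and the relative-to-$C$ bookkeeping (in particular the remark that a relative homotopy of $k_{0}$-jet sections forces $f=g$ outright, not merely to high order, on the neighborhood in question) are all routine.
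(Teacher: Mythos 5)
Your proof is correct and follows essentially the same route as the paper: reduce the $k=\infty$ case to the finite level $k_{0}=\max\{\ell+1,n-|n-p|+2\}$ via Mather's finite determinacy (Proposition 3.2 in the paper, which gives that $\mathrm{codim}\,\mathcal{K}z\leqq\ell$ forces $\mathcal{K}$-$k_{0}$-determinacy), establish the identity $\mathcal{O}_{\ell}^{\infty}(n,p)=(\pi_{k_{0}}^{\infty})^{-1}(\mathcal{O}_{\ell}^{k_{0}}(n,p))$, and then apply Theorem 1.1. The paper's own proof is considerably terser and simply asserts that the principle follows "also for $\mathcal{O}_{\ell}^{\infty}$-regular maps"; you fill in the omitted bookkeeping, in particular the verification that $\pi(\mathcal{O}_{\ell}^{\infty}(n,p))$ is open, $\mathcal{K}$-invariant, and contained in $J^{k_{0}}(n,p)\setminus W_{\ell+1}^{k_{0}}$, and the lifting of the relative homotopy from the $k_{0}$-jet level back to the $\infty$-jet level using the contractibility of the affine fibers of $J^{\infty}(N,P)\to J^{k_{0}}(N,P)$.
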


It is well known that any smooth map $f:N\rightarrow P$ is homotopic to a
smooth map $g:N\rightarrow P$ such that $j_{x}^{\infty}g$ is of finite
$\mathcal{K}$-codimension for any $x\in N$ (see, for example, [W, Theorem 5.1]).

There have been described many important applications of the homotopy
principles in [G2]. We only refer to the recent applications of the relative
homotopy principle on the existence level to the problems in topology such as
the elimination of singularities and the existence of $\mathcal{O}_{l}^{k}%
$-regular maps in [An1-7] and [Sa] and the relation between the stable
homotopy groups of spheres and higher singularities in [An4].

Let $P$\ be a closed manifold of dimension $p$. Let $\mathfrak{h}(P)$\ denote
the group of all homotopy classes of homotopy equivalences of $P$. Let
$\mathfrak{h}_{\ell}(P)$\ denote the subset of $\mathfrak{h}(P)$\ which
consists of all homotopy classes of maps which are homotopic\ to
$\mathcal{O}_{l}^{\infty}$-regular homotopy equivalences. In particular,
$\mathfrak{h}_{0}(P)$\ is the subset of all homotopy classes of maps\ which
are homotopic\ to diffeomorphisms of $P$.\ In this paper we will prove that
the following\ filtration%
\begin{equation}
\mathfrak{h}_{0}(P)\subset\mathfrak{h}_{1}(P)\subset\cdots\subset
\mathfrak{h}_{\ell}(P)\subset\cdots\subset\mathfrak{h}(P).
\end{equation}
is never trivial in general.

\begin{theorem}
For a given positive integer $d$, there exists a closed oriented $p$-manifold
$P$ and a sequence of positive integers $\ell_{1}$, $\ell_{2},\cdots,\ell_{d}$
with $\ell_{j}<\ell_{j+1}$ for $1\leq j<d$ such that%
\[
\mathfrak{h}_{0}(P)\varsubsetneqq\mathfrak{h}_{\ell_{1}}(P)\varsubsetneqq
\mathfrak{h}_{\ell_{2}}(P)\varsubsetneqq\cdots\varsubsetneqq\mathfrak{h}%
_{\ell_{d}}(P)\varsubsetneqq\mathfrak{h}(P).
\]

\end{theorem}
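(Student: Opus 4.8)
The plan is to realize the strict inclusions by exhibiting, for each prescribed depth $d$, a single closed oriented $p$-manifold $P$ together with $d$ homotopy self-equivalences $\varphi_1,\dots,\varphi_d$ of $P$ such that $\varphi_j$ is homotopic to an $\mathcal{O}_{\ell_j}^{\infty}$-regular map but is \emph{not} homotopic to any $\mathcal{O}_{\ell_{j-1}}^{\infty}$-regular map, for a strictly increasing sequence $\ell_1<\ell_2<\cdots<\ell_d$ (and with $\varphi_j$ not homotopic to a diffeomorphism, handling the first inclusion $\mathfrak{h}_0(P)\varsubsetneqq\mathfrak{h}_{\ell_1}(P)$). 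The positive direction — that $\varphi_j$ \emph{does} lie in $\mathfrak{h}_{\ell_j}(P)$ — is exactly where Theorem 1.2 is used: it suffices to produce a continuous section $s$ of $\mathcal{O}_{\ell_j}^{\infty}(P,P)$ covering $\varphi_j$ (an ``$\ell_j$-formal solution''), since by Theorem 1.2 any such section is homotopic to $j^{\infty}f$ for an honest $\mathcal{O}_{\ell_j}^{\infty}$-regular $f$, and $f\simeq\varphi_j$ is then automatically a homotopy equivalence. Such a section is built by a standard obstruction-theoretic / Gromov-type construction over the skeleta of $P$: away from a codimension-reason the equivalence is already an immersion, and near finitely many points one plugs in local $\mathcal{K}$-finite germs (e.g. stabilized Morin or Thom–Boardman germs) whose $\mathcal{K}$-codimension can be controlled to be exactly $\ell_j$.

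The obstruction to the \emph{negative} statements — ``$\varphi_j$ is not $\mathcal{O}_{\ell}^{\infty}$-regular for $\ell<\ell_j$'' — must come from an invariant of the homotopy class of $\varphi_j$ that is forced to vanish (or to be constrained) whenever a representative has only singularities of $\mathcal{K}$-codimension $\le\ell$. The natural candidate is a characteristic-class obstruction: if $f:P\to P$ is $\mathcal{O}_\ell^\infty$-regular then $f^{*}TP\ominus TP$ (a virtual bundle of rank $0$) is stably represented, via the singularity structure, by a bundle supported on the singular set, whose dimension is bounded in terms of $\ell$; consequently certain characteristic numbers of $f$ — e.g. the evaluation of Pontryagin classes $p_I(f^{*}TP - TP)$, or of the ``singularity cohomology classes'' dual to Thom–Boardman strata — must vanish above a range depending on $\ell$. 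So the construction of $P$ should be reverse-engineered: choose $P$ so that $H^{*}(P)$ carries enough room, and choose $\varphi_j$ (for instance self-maps built from connected sums, from framed surgery on embedded spheres, or from exotic smoothings glued along a codimension-$\ell_j$ sphere) so that a degree-$2m_j$ characteristic number is nonzero precisely when $m_j$ exceeds the bound corresponding to $\ell_{j-1}$ but is within the bound for $\ell_j$. A clean way to get a controllable family is to take $P$ to be (a high-dimensional manifold containing) a product or connected sum involving factors $\mathbb{C}P^{2}$, $K3$, or other manifolds with nonzero Pontryagin numbers, and let $\varphi_j$ act by an explicit diffeomorphism-up-to-singularity on an appropriate summand; then the jet-codimension lower bound follows from a dimension count on the Thom–Boardman stratification (the class of $\Sigma^{I}$ has codimension growing with $\ell$, so a map avoiding $W_{\ell+1}$ cannot carry a nonzero class of too high degree).

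I would organize the write-up as: (1) fix $d$ and define the target $\ell_j$'s abstractly as ``the smallest $\ell$ for which a certain local model fits''; (2) state and prove a \emph{realization lemma}: for any $\ell$ and any homotopy self-equivalence admitting an $\ell$-formal solution, Theorem 1.2 promotes it to an $\mathcal{O}_\ell^\infty$-regular representative (this is essentially immediate); (3) state and prove an \emph{obstruction lemma}: if $\varphi$ is homotopic to an $\mathcal{O}_\ell^\infty$-regular map then a specified characteristic number $c_\ell(\varphi)$ vanishes — the core computation, using the Thom–Boardman codimension bounds and the fact that outside $W_{\ell+1}$ the singular locus has a bounded-dimensional fundamental class; (4) \emph{construct} $P$ and $\varphi_1,\dots,\varphi_d$ so that $c_{\ell_{j-1}}(\varphi_j)\neq 0$ while $\varphi_j$ admits an $\ell_j$-formal solution, and check $\varphi_j\not\simeq\mathrm{id}$ up to diffeomorphism to separate $\mathfrak{h}_0$. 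The main obstacle is step (3): one needs a genuinely homotopy-invariant obstruction that is \emph{sharp} at each level $\ell_j$, i.e. neither so weak that it fails to separate consecutive stages nor so strong that no $\mathcal{O}_{\ell_j}^\infty$-regular map can realize the class — balancing these is the heart of the argument, and I expect it to rely on the detailed $\mathcal{K}$-codimension stratification of $J^{k}(n,p)$ recalled in the earlier sections together with the precise range $k\geqq\max\{\ell+1,\,n-|n-p|+2\}$ in Theorem 1.2.
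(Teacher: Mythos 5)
Your high-level strategy is the right one and matches the paper's: produce self-homotopy-equivalences of a manifold $P$ detected by characteristic-class obstructions dual to Thom--Boardman strata, realize them as $\mathcal{O}_{\ell_j}$-regular for suitably large $\ell_j$ via the homotopy principle (Theorem 1.2 and Remark 3.8), and rule out lower-level regularity by nonvanishing Thom polynomials. You also correctly isolate the obstruction lemma as the crux. However, there are two concrete gaps in the proposal, and the place where it actively goes wrong is precisely the step you yourself flag as ``the heart of the argument.''

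First, you do not actually produce the homotopy self-equivalences. Your suggestion to take $P$ built from $\mathbb{C}P^{2}$, $K3$, etc.\ and let $\varphi_j$ ``act by an explicit diffeomorphism-up-to-singularity'' cannot work: any map homotopic to a diffeomorphism satisfies $\tau_P\cong f^{*}\tau_P$ stably, so every characteristic class of $\tau_P-f^{*}(\tau_P)$ vanishes and the obstruction you want to measure is identically zero. Producing a homotopy self-equivalence $f\colon P\to P$ of degree $1$ with $\mathbf{c}_{\mathbb{Z}}(\Sigma^{2i},\tau_P-f^{*}(\tau_P))\ne 0$ is not elementary; the paper does it in Theorems 5.1--5.2 using surgery theory: the Sullivan exact sequence $0\to\mathcal{S}(P)\to[P,G/O]\to\mathbb{Z}$, the cobordism group $\Omega_{4q}^{h\text{-}eq}$ of degree-$1$ homotopy equivalences from [An5], the rational nontriviality of the characteristic homomorphism $\mathcal{C}_{2i}$ on that group, and then the doubling trick $g(x,y)=(f^{-1}(y),f(x))$ on $N\times P$ to convert a homotopy equivalence $f\colon N\to P$ between \emph{different} manifolds into a \emph{self}-equivalence. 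None of this machinery appears in your proposal, and without it the construction step is missing.

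Second, your obstruction lemma is not quite the right statement and is left unproved. The heuristic that ``$f^{*}TP\ominus TP$ is supported on the singular set of dimension bounded by $\ell$'' is not the mechanism the argument needs. The actual lemma (Proposition 4.2, after Izumiya--Kogo) is a codimension inequality: if $(p-n+i)\bigl(\tfrac12 i(i+1)-p+n\bigr)-i^{2}\geqq n+\ell$, then $\Sigma^{i}(n,p)\subset W_{\ell+1}^{k}(n,p)$, so any $\mathcal{O}_{\ell}^{k}$-regular map avoids $\Sigma^{i}$, and hence (Corollary 4.4) its Thom polynomial $\mathbf{c}_{\mathbb{Z}}(\Sigma^{i},\tau_N-f^{*}\tau_P)$ must vanish. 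This is what makes the obstruction both homotopy-invariant and sharp in $\ell$; your proposal gestures at a dimension count but does not supply an inequality of this kind, and without it you cannot separate the levels $\ell_{j-1}$ and $\ell_j$. Finally, a smaller structural difference: the paper does not build a single $P$ from the start; it constructs manifolds $P_0,\dots,P_d$ and self-equivalences $f_t\colon P_t\to P_t$ inductively, defines $\ell_{t+1}$ as the minimal $\ell$ making $f_t$ an $\mathcal{O}_{\ell}^{\infty}$-regular map (using Remark 3.8 and compactness), and only afterwards forms $P=P_0\times\cdots\times P_d$ with $F_t=\mathrm{id}\times\cdots\times f_t\times\cdots\times\mathrm{id}$, checking $\tau_P-F_t^{*}\tau_P=q_t^{*}(\tau_{P_t}-f_t^{*}\tau_{P_t})$ and using injectivity of $q_t^{*}$ and Proposition 4.3 (stability of the inclusion $\Sigma^i\subset W_{\ell+1}$ under adding a trivial factor) to transfer the obstruction to $P$. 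You would need all of these ingredients to complete your outline.
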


In Section 2\ we will review the results on the Boardman manifolds and the
fundamental properties of $\mathcal{K}$-equivalence and $\mathcal{K}%
$-determinacy which are necessary in this paper. In Section 3 we will recall
[An7, Theorem 0.1] and apply it in the proofs of Theorems 1.1 and 1.2. In
Section 4 we will study the nonexistence problem of $\mathcal{O}_{l}^{k}%
$-regular maps. In Section 5 we will study the filtration in (1.1) and prove
Theorem 1.3.

\section{Boardman manifolds and $\mathcal{K}$-orbits}

Throughout the paper all manifolds are Hausdorff, paracompact and smooth of
class $C^{\infty}$. Maps are basically smooth (of class $C^{\infty}$) unless
otherwise stated.

For a Boardman symbol (simply symbol) $I=(i_{1},\cdots,i_{k})$ with $n\geqq
i_{1}\geqq\cdots\geqq i_{k}\geqq0$, let $\Sigma^{I}(n,p)$ denote the Boardman
manifold of symbol $I$\ in $J^{k}(n,p)$\ which has been defined in [T], [L],
[Bo] and [MaTB]. Let $A_{n}=\mathbb{R}[[x_{1},\cdots,x_{n}]]$ denote the
formal power series of algebra on variables $x_{1},\cdots,x_{n}$. Let
$\mathfrak{m}_{n}$ be its maximal ideal and $A_{n}(k)=A_{n}/\mathfrak{m}%
_{n}^{k+1}$.\ Let $z=j_{0}^{k}f\in J^{k}(n,p)$\ where $f=(f^{1},\cdots
,f^{p}):(\mathbb{R}^{n},0)\rightarrow(\mathbb{R}^{p},0)$. We define
$\mathcal{I}(z)$\ to be the ideal in $A_{n}(k)$\ generated by the image in
$A_{n}(k)$\ of the Taylor expansions of $f^{1},\cdots,f^{p}$. It has been
proved in [Bo]\ and [MaTB] that the Boardman symbol $I(z)$\ of $z$ depends
only on the ideal $\mathcal{I}(z)$ by the notion of the Jacobian extension.
Let $\Sigma^{I}(N,P)$ denote the subbundle of $J^{k}(N,P)$\ over $N\times
P$\ associated to $\Sigma^{I}(n,p)$. Let $\Sigma_{x,y}^{I}(N,P)$ denote the
fiber of $\Sigma^{I}(N,P)$ over $(x,y)\in N\times P$.

Since codim$\Sigma^{i_{1}}(n,p)=(p-n+i_{1})i_{1}$, the following proposition
follows from [An6, Remark 2.1], which has been proved by using the results in
[Bo, Section 6].

\begin{proposition}
Let $I=(i_{1},\cdots,i_{\ell})$ be a symbol such that $i_{1}\geqq
\max\{n-p+1,1\}$ and $\Sigma^{I}(n,p)$ is nonempty. Then we have%
\[
\mathrm{codim}\Sigma^{I}(n,p)\geqq(p-n+i_{1})i_{1}+(1/2)\Sigma_{j=2}^{\ell
}\text{ }i_{j}(i_{j}+1).
\]
In particular, if $i_{\ell}>0$, then we have $\mathrm{codim}\Sigma
^{I}(n,p)\geqq|n-p|+\ell$.
\end{proposition}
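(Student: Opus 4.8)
The plan is to bound the codimension of $\Sigma^{I}(n,p)$ by adding up the codimensions contributed at each successive step of the Boardman filtration. First I would handle the first step: since $i_{1}\geqq\max\{n-p+1,1\}$ and $\Sigma^{i_{1}}(n,p)$ is nonempty, the classical formula $\mathrm{codim}\,\Sigma^{i_{1}}(n,p)=(p-n+i_{1})i_{1}$ gives the leading term directly. The core of the argument is then to control the \emph{increments} $\mathrm{codim}\,\Sigma^{i_{1},\dots,i_{j}}(n,p)-\mathrm{codim}\,\Sigma^{i_{1},\dots,i_{j-1}}(n,p)$ for $2\leq j\leq\ell$. Here I would invoke the results of [Bo, Section 6] (as already used in [An6, Remark 2.1]): the stratum $\Sigma^{i_{1},\dots,i_{j}}$ sits inside $\Sigma^{i_{1},\dots,i_{j-1}}$ as the locus where the appropriate intrinsic derivative has kernel rank $i_{j}$, and the codimension of this locus inside the ambient Boardman manifold is at least that of a Thom–Porteous-type degeneracy condition, which is $i_{j}(i_{j}+1)/2$ when the relevant source and target ranks are large enough (the symbol being a nonincreasing sequence guarantees the inequality $i_{j-1}\geq i_{j}$ needed for this to make sense). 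Summing the increment bound over $j=2,\dots,\ell$ yields the stated inequality
\[
\mathrm{codim}\,\Sigma^{I}(n,p)\geqq(p-n+i_{1})i_{1}+\tfrac12\sum_{j=2}^{\ell} i_{j}(i_{j}+1).
\]

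For the "in particular" clause, assume $i_{\ell}>0$, so that all of $i_{1},\dots,i_{\ell}$ are positive (again using that the symbol is nonincreasing). Then each term $\tfrac12 i_{j}(i_{j}+1)$ with $j\geq 2$ is at least $1$, contributing at least $\ell-1$ in total. For the first term, $i_{1}\geq 1$ and $i_{1}\geq n-p+1$ give $(p-n+i_{1})i_{1}\geq p-n+i_{1}\geq p-n+1$ when $p\geq n$, and when $n>p$ one uses $i_{1}\geq 1$ together with $(p-n+i_{1})\geq 1$ (nonemptiness forces $p-n+i_{1}\geq 0$, and if it were $0$ the first stratum would be a submersion stratum contributing nothing, but then $i_{1}\geq n-p+1$ is violated unless $\ldots$) — more cleanly, $(p-n+i_{1})i_{1}\geq |n-p|+1$ follows by checking the two cases $p\geq n$ and $n>p$ directly from $i_{1}\geq\max\{n-p+1,1\}$. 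Adding $|n-p|+1$ from the first term and $\ell-1$ from the remaining terms gives $\mathrm{codim}\,\Sigma^{I}(n,p)\geq|n-p|+\ell$.

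The step I expect to be the main obstacle is justifying the increment bound $i_{j}(i_{j}+1)/2$ rigorously rather than by analogy: one must be sure that at each stage of the filtration the intrinsic derivative is being taken on a space whose dimension is large enough that the symmetric degeneracy locus genuinely has the full expected codimension, and that the hypothesis $i_{1}\geq\max\{n-p+1,1\}$ (rather than just $i_{1}\geq 1$) is exactly what is needed to push the first-step contribution up to $(p-n+i_{1})i_{1}$ with the correct sign of $p-n+i_{1}$. This is precisely the content extracted from [Bo, Section 6] via [An6, Remark 2.1], so in the paper I would simply cite that and verify the elementary arithmetic of the "in particular" statement, which is the only genuinely new computation here.
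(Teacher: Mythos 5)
Your proposal matches the paper's approach: the paper gives no self-contained proof but simply observes that $\mathrm{codim}\,\Sigma^{i_1}(n,p) = (p-n+i_1)i_1$ and then cites [An6, Remark 2.1] (itself relying on [Bo, Section 6]) for the full inequality, leaving only the arithmetic of the ``in particular'' clause to check. You identify the same structure: leading Thom--Porteous term for $\Sigma^{i_1}$, increment bound $\tfrac12 i_j(i_j+1)$ for each further Boardman refinement via the symmetric intrinsic derivative, all deferred to the cited references, followed by the elementary case-split $(p-n+i_1)i_1 \geq |n-p|+1$ (which you carry out correctly in both the $p\geq n$ and $n>p$ cases) combined with $\tfrac12 i_j(i_j+1)\geq 1$ for $j\geq 2$. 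Your digression about ``nonemptiness forces $p-n+i_1\geq 0$'' in the $n>p$ case is a false start you immediately retract, and the clean argument you give in its place is the right one. In short: same approach, correct arithmetic, and an honest acknowledgment that the substantive codimension bound is imported from the reference rather than re-proved, exactly as the paper does.
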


Let $\Omega^{I}(n,p)$ denote the union of all Boardman manifolds $\Sigma
^{J}(N,P)$ with $J\leq I$ in the lexicographic order. We have the following
lemma (see [duP1]).

\begin{lemma}
The space $\Omega^{I}(n,p)$ is open in $J^{k}(n,p)$.
\end{lemma}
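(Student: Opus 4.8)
The plan is to show that the complement of $\Omega^I(n,p)$ in $J^k(n,p)$ is closed, equivalently that a jet which fails to lie in $\Omega^I(n,p)$ has a whole neighbourhood of jets with the same failure. Recall that $\Omega^I(n,p)$ is by definition the union of the Boardman strata $\Sigma^J(n,p)$ with $J\leq I$ in the lexicographic order; since the Boardman strata partition $J^k(n,p)$, the complement of $\Omega^I(n,p)$ is precisely the union of the strata $\Sigma^J(n,p)$ with $J>I$. So I must prove that $\bigcup_{J>I}\Sigma^J(n,p)$ is closed.

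First I would use the well-known description of the Boardman symbol of a jet $z$ in terms of the ideal $\mathcal{I}(z)\subset A_n(k)$ and its Jacobian extensions, as recalled in Section~2 (following [Bo], [MaTB]): the first index $i_1(z)$ is the corank of the differential, the higher indices are determined successively by the ranks of the iterated Jacobian extension ideals, and each of these ranks is an \emph{upper semicontinuous} function of $z$ (the rank of a matrix of polynomials whose entries depend smoothly on $z$ can only drop on a closed set, hence the corank can only jump up on a closed set). Concretely, for each symbol $K$ the set $\{z : I(z)\geq K \text{ lexicographically}\}$, i.e. the union of strata with symbol $\geq K$, is the intersection of finitely many such rank-degeneracy loci and is therefore closed; this is exactly the statement in [duP1] that the ``$\geq$'' sets are closed, equivalently the ``$\leq$'' sets (the $\Omega$'s) are open. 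Then I would simply note that $\{z: I(z)\geq K\}$ for $K$ the immediate lexicographic successor symbols of $I$ covers the complement of $\Omega^I(n,p)$, so the complement is a finite union of closed sets, hence closed, and $\Omega^I(n,p)$ is open.

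The one genuinely delicate point — and the step I expect to be the main obstacle — is making the lexicographic bookkeeping precise: unlike a single semialgebraic inequality, ``$I(z)>I$ lexicographically'' is not literally one rank condition but a disjunction over the finitely many ways the symbol of $z$ can first exceed $I$. One must check that each such ``first exceeds at position $m$'' locus is closed, using that fixing $i_1(z)=i_1,\dots,i_{m-1}(z)=i_{m-1}$ is itself a locally closed (indeed, on the relevant stratum, an open-in-closed) condition and that on that locus the next index $i_m(z)$ is again upper semicontinuous. This is exactly the content of Boardman's analysis of the Jacobian extension and is carried out in [Bo, Section~6] and in [duP1]; I would invoke those references for the semicontinuity of the successive indices rather than reprove it. Granting that, the proof is a short formal argument: $\Omega^I(n,p) = J^k(n,p)\setminus\bigcup_{J>I}\Sigma^J(n,p)$ is the complement of a finite union of closed sets, hence open.
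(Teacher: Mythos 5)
The paper gives no proof of this lemma at all: the sentence immediately preceding it simply refers the reader to [duP1]. Your proposal correctly reconstructs the argument that reference supplies — namely, that the complement $\bigcup_{J>I}\Sigma^{J}(n,p)$ is closed because the Boardman symbol $z\mapsto I(z)$ is upper semicontinuous for the lexicographic order, which in turn comes from the rank-degeneracy description of the iterated Jacobian extensions — and you correctly isolate the only genuinely delicate point (that ``$I(z)>I$'' is a disjunction of conditions, each requiring the semicontinuity of the next index on the locus where the earlier indices are held fixed) and defer it to [Bo, Section 6] and [duP1], which is precisely the level of justification the paper itself adopts by citing du~Plessis.
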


Let us review the $\mathcal{K}$-equivalence of two smooth map germs
$f,g:(N,x)\rightarrow(P,y)$, which has been introduced in [MaIII, (2.6)], by
following [Mart, II, 1]. We say\ that the above two map germs $f$\ and
$g$\ are $\mathcal{K}$-equivalent if there exists a smooth map germ
$\phi:(N,x)\rightarrow GL(\mathbb{R}^{p})$\ and a local diffeomorphism
$h:(N,x)\rightarrow(N,x)$\ such that $f(x)=\phi(x)g(h(x))$. It is known that
this $\mathcal{K}$-equivalence is nothing but the contact equivalence
introduced in [MaIII]. The contact group $\mathcal{K}$ is defined as a certain
subgroup of the group of germs of local diffeomorphisms $(N,x)\times(P,y)$ and
acts on $J_{x,y}^{k}(N,P)$. For a $k$-jet $z$ in $J_{x,y}^{k}(N,P)$ let
$\mathcal{K}z$ denote the orbit of $\mathcal{K}$ through $z$. As is well
known, $\mathcal{K}z$ is an orbit of a Lie group. Hence, $\mathcal{K}z$ is a
submanifold of $J_{x,y}^{k}(N,P)$. This fact is also observed from the above
definition. The following lemma is important in this paper.

\begin{lemma}
The Boardman manifold $\Sigma_{x,y}^{I}(N,P)$ in $J_{x,y}^{k}(N,P)$\ is
invariant with respect to the action of $\mathcal{K}$.
\end{lemma}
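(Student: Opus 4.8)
The plan is to exploit the fact, already recalled in this section, that the Boardman symbol $I(z)$ of a jet $z=j_{0}^{k}f\in J^{k}(n,p)$ depends only on the ideal $\mathcal{I}(z)\subset A_{n}(k)$ generated by the Taylor expansions of the components $f^{1},\cdots,f^{p}$, together with its iterated Jacobian extensions as described in [Bo, Section 6] and [MaTB]. That construction is intrinsic on the source: it uses only the ring structure of $A_{n}(k)$ and the associated Jacobian extension ideals, which are carried to one another by any ring automorphism of $A_{n}(k)$ induced by a local diffeomorphism germ of $(\mathbb{R}^{n},0)$. Hence the symbol is unchanged under such automorphisms, and it suffices to prove that the action of an element of $\mathcal{K}$ changes $\mathcal{I}(z)$ only by a source automorphism of this kind.

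To see this, fix local coordinates near $x\in N$ and $y\in P$ and identify a jet in $J_{x,y}^{k}(N,P)$ with $j_{0}^{k}f$ for a germ $f:(\mathbb{R}^{n},0)\rightarrow(\mathbb{R}^{p},0)$. Represent an element of $\mathcal{K}$ by a germ of diffeomorphism $H(u,v)=(h(u),\theta(u,v))$ of $(\mathbb{R}^{n}\times\mathbb{R}^{p},0)$, where $h$ is a diffeomorphism germ of $(\mathbb{R}^{n},0)$ and $\theta(u,\cdot)$ is a diffeomorphism germ of $(\mathbb{R}^{p},0)$ with $\theta(u,0)=0$. Then $z\cdot H=j_{0}^{k}f'$, where the graph of $f'$ is the image under $H$ of the graph of $f$; unwinding this gives $f'(w)=\theta\bigl(h^{-1}(w),f(h^{-1}(w))\bigr)$. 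By Hadamard's lemma there is a germ of matrix $\Theta(u,v)\in GL(\mathbb{R}^{p})$ with $\theta(u,v)=\Theta(u,v)\,v$, so $f'(w)=\Theta\bigl(h^{-1}(w),f(h^{-1}(w))\bigr)\cdot(f^{1}\circ h^{-1},\cdots,f^{p}\circ h^{-1})(w)$. Thus the components of $f'$ are obtained from the components of $f\circ h^{-1}$ by multiplication by a matrix invertible over $A_{n}$, so they generate the same ideal in $A_{n}(k)$; that is, $\mathcal{I}(z\cdot H)$ equals the image of $\mathcal{I}(z)$ under the automorphism of $A_{n}(k)$ induced by $h^{-1}$. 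Combining this with the previous paragraph, $I(z\cdot H)=I(z)$ for every $H\in\mathcal{K}$, so $\mathcal{K}z\subset\Sigma_{x,y}^{I(z)}(N,P)$ whenever $z\in\Sigma_{x,y}^{I(z)}(N,P)$; since $\mathcal{K}$ acts as a group, $\Sigma_{x,y}^{I}(N,P)$ is a union of $\mathcal{K}$-orbits, which is the assertion.

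The only delicate point is the claim used in the first paragraph, namely that Boardman's Jacobian extension procedure — whether phrased via the ideals of subdeterminants of Jacobian matrices or via the coordinate-free formulation of [Bo, Section 6] — is genuinely invariant under source diffeomorphisms and does not covertly depend on the chosen coordinates on $\mathbb{R}^{n}$. This is precisely the content of the statement, recalled above and proved in [Bo] and [MaTB], that $I(z)$ depends only on $\mathcal{I}(z)$; granting it, the remainder is the elementary ideal computation carried out here, and no further estimate is needed. (One may also note that the argument is unaffected by whether $k$ is finite or $k=\infty$, since the truncation $A_{n}\to A_{n}(k)$ is compatible with the automorphism induced by $h$ and with the multiplication by $\Theta$.)
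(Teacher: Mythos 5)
Your proof is correct and is essentially the paper's own argument. Both you and the paper reduce the lemma to the two facts that (a) a $\mathcal{K}$-move changes the ideal generated by the component germs only by the source-ring automorphism induced by the diffeomorphism $h$, and (b) the Boardman symbol of a jet depends only on that ideal, citing [Bo] and [MaTB]. The only cosmetic difference is that you begin from the graph-diffeomorphism description of $\mathcal{K}$ and use Hadamard's lemma to extract the $GL(\mathbb{R}^{p})$-valued factor $\Theta$, whereas the paper starts directly from the equivalent formulation $f=\phi\cdot(g\circ h)$ with $\phi$ a $GL(\mathbb{R}^{p})$-valued germ (the definition it recalled from [Mart]) and immediately writes $h_{\ast}(\mathfrak{I}(g))=\mathfrak{I}(f)$; your extra paragraph just rederives that equivalence of definitions.
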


\begin{proof}
Let $z=j_{x}^{k}f$\ and $w=j_{x}^{k}g$\ be $k$-jets in $J_{x,y}^{k}(N,P)$ such
that two map germs $f$\ and $g$\ are $\mathcal{K}$-equivalent as above. Let
$h_{\ast}:C_{x}\rightarrow C_{x}$ be the isomorphism defined by $h_{\ast}%
(\phi)=\phi\circ h$. By the definition of $\mathcal{K}$-equivalence we have
$h_{\ast}(\mathfrak{I}(g))=\mathfrak{I}(f)$. The Thom-Boardman symbols of
$j_{x}^{k}f$ and $j_{x}^{k}g$ are determined by $\mathfrak{I}(f)$ and
$\mathfrak{I}(g)$, and are the same by [MaTB, 2, Corollary]. This proves the assertion.
\end{proof}

Let us review the results in [MaIII], [MaIV] and [MaV] which are necessary in
this paper. Let $C^{\infty}(N,x)$\ and $C^{\infty}(P,y)$\ denote the rings of
smooth function germs on $(N,x)$ and $(P,y)$\ respectively. Let $\mathfrak{m}%
_{x}$ and $\mathfrak{m}_{y}$\ denote their maximal ideals respectively. Let
$f:(N,x)\rightarrow(P,y)$\ be a germ of a smooth map. Let $f^{\ast}:C^{\infty
}(P,y)\rightarrow C^{\infty}(N,x)$\ denote the homomorphism defined
by\ $f^{\ast}(a)=a\circ f$. Let $\theta(N)_{x}$\ denote the $C^{\infty}%
(N,x)$-module of all germs at $x$ of smooth vector fields on $(N,x).$ We
define $\theta(P)_{y}$ similarly for $y\in P$.\ Let $\theta(f)_{x}$\ denote
the $C^{\infty}(N,x)$-module of germs at $x$ of smooth vector fields along
$f$, namely which consists of all smooth germs $\varsigma:(N,x)\rightarrow
TP$\ such that $p_{P}\circ\varsigma=f$. Here, $p_{P}:TP\rightarrow P$\ is the
canonical projection. Then we have the homomorphisms%
\begin{equation}
tf:\theta(N)_{x}\rightarrow\theta(f)_{x}%
\end{equation}
defined by $tf(u_{N})=df\circ u_{N}$\ for $u_{N}\in\theta(N)_{x}$. For a
singular jet $z=j_{0}^{k}f\in J^{k}(N,P)$ there has been defined the
isomorphism%
\begin{equation}
T_{z}(J_{x,y}^{k}(N,P))\longrightarrow\mathfrak{m}_{x}\theta(f)_{x}%
/\mathfrak{m}_{x}^{k+1}\theta(f)_{x}%
\end{equation}
in [MaIII, (7.3)] such that $T_{z}(\mathcal{K}z)$ corresponds to
$tf(\mathfrak{m}_{x}\theta(N)_{x})+f^{\ast}(\mathfrak{m}_{y})(\theta(f)_{x})$
modulo $\mathfrak{m}_{x}^{k+1}\theta(f)_{x}$. We do not here explain the
definition. According to [MaIII] we define $d(f,\mathcal{K})$ to be%
\begin{equation}
\dim\mathfrak{m}_{x}\theta(f)_{x}/(tf(\mathfrak{m}_{x}\theta(N)_{x})+f^{\ast
}(\mathfrak{m}_{y})(\theta(f)_{x})),
\end{equation}
which is equal to codim$\mathcal{K}z$.

\section{Proofs of Theorems 1.1 and 1.2.}

In this section we prove Theorems 1.1 and 1.2.

Let $k$ be a positive integer. Let $W_{\rho}^{k}=W_{\rho}^{k}(n,p)$ denote the
subset consisting of all $z\in J^{k}(n,p)$\ such that the codimension of
$\mathcal{K}z$ in $J^{k}(n,p)$ is not less than $\rho$. The following
lemma\ has been observed in [MaV, Section 7 and Proof of Theorem 8.1].

\begin{lemma}
Let $\rho$ be an integer with $\rho\geqq1$. Then $W_{\rho}^{k}$ is an
algebraic subset of $J^{k}(n,p)$.
\end{lemma}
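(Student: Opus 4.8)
The plan is to show that $W_{\rho}^{k}$ is the zero set of a finite collection of polynomial functions on the jet space $J^{k}(n,p)$, using the fact from Section 2 that $\mathrm{codim}\,\mathcal{K}z = d(f,\mathcal{K})$ is the dimension of a quotient of finite-dimensional vector spaces depending algebraically on the jet $z$. First I would recall that, by the isomorphism (2.3) taken from [MaIII, (7.3)], the tangent space $T_z(\mathcal{K}z)$ is identified with the image of a linear map whose matrix entries are polynomial in the coefficients of $z$; concretely, working in the finite-dimensional model $\mathfrak{m}_{x}\theta(f)_x/\mathfrak{m}_x^{k+1}\theta(f)_x$, the subspace $tf(\mathfrak{m}_x\theta(N)_x) + f^{\ast}(\mathfrak{m}_y)(\theta(f)_x)$ modulo $\mathfrak{m}_x^{k+1}$ is spanned by finitely many vectors (images of the basis monomial vector fields and of $f^i$ times the basis vectors of $\theta(f)_x$) whose coordinates are polynomials in the entries of $z$.

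The key step is then the standard linear-algebra observation that the rank of a matrix whose entries vary polynomially is lower semicontinuous, and that the locus where the rank drops to at most a fixed value $r$ is precisely the common zero set of all $(r+1)\times(r+1)$ minors — an algebraic subset. Since $\mathrm{codim}\,\mathcal{K}z \geqq \rho$ is equivalent to $\dim T_z(\mathcal{K}z) \leqq \dim J^k_{x,y}(n,p) - \rho =: m - \rho$, i.e. to the matrix $M(z)$ representing the spanning set of $T_z(\mathcal{K}z)$ having rank at most $m-\rho$, the set $W_{\rho}^{k}$ is cut out inside the singular jet locus by the vanishing of all $(m-\rho+1)$-minors of $M(z)$. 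One must also incorporate the condition that $z$ be singular (regular jets have $\mathcal{K}$-codimension $0 < \rho$); the non-singular jets form the complement of the algebraic set $\Sigma^1(n,p)$ where a certain set of minors of the differential vanish, so intersecting with (the Zariski closure handling of) $\Sigma^{\geqq 1}$ keeps us in the algebraic category — alternatively one observes directly that on regular jets the relevant quotient is zero so the minor equations are automatically inconsistent with $\rho \geqq 1$, and no separate treatment is needed.

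The main obstacle I anticipate is bookkeeping rather than conceptual: one must make explicit (or cite precisely from [MaIII], [MaIV], [MaV]) that the identification (2.3) and the generators of $T_z(\mathcal{K}z)$ can be chosen to depend polynomially — not merely smoothly — on $z$, including the truncation modulo $\mathfrak{m}_x^{k+1}\theta(f)_x$, which is itself a linear (hence polynomial) operation. Since the paper explicitly says this lemma "has been observed in [MaV, Section 7 and Proof of Theorem 8.1]," I would keep the argument short: set up the polynomial family of matrices $M(z)$ as above, note $\mathrm{rank}\,M(z) = \dim T_z(\mathcal{K}z)$ off the regular locus, and conclude that $W_{\rho}^{k} = \{z : \text{all }(m-\rho+1)\text{-minors of }M(z)\text{ vanish}\}$ is algebraic, citing Mather for the polynomiality of the construction. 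A remark that $W_{\rho}^{k}$ is in particular closed (which is all that is used later, e.g. in forming the open sets $J^k(n,p)\setminus W_{\ell+1}^k$) would round off the proof.
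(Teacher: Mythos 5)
The paper does not give a proof of Lemma 3.1 — it only cites [MaV, Section 7 and Proof of Theorem 8.1] — and your sketch correctly reconstructs the argument behind that citation. The key point you identify — that under the isomorphism (2.2), $T_z(\mathcal{K}z)$ is the column space of a matrix $M(z)$ whose entries are polynomial (in fact linear) in the coordinates of $z$, so that $\mathrm{codim}\,\mathcal{K}z\geqq\rho$ becomes the algebraic condition that all $(m-\rho+1)\times(m-\rho+1)$ minors of $M(z)$ vanish, where $m=\dim J^{k}(n,p)$ — is exactly Mather's argument, and your observation that regular jets are excluded automatically (since there $M(z)$ has full rank) correctly disposes of the only point needing separate care.
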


The order of $\mathcal{K}$-determinacy is estimated by the codimension of a
$\mathcal{K}$-orbit as follows.

\begin{proposition}
Let $k$ be an integer with $k>\rho$. Let $z=j^{k}f$\ be a singular jet in
$J^{k}(n,p)\backslash W_{\rho+1}^{k}$. Then $z$\ is $\mathcal{K}$-$k$-determined.
\end{proposition}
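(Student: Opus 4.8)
The plan is to reduce the claim to Mather's infinitesimal criterion for finite $\mathcal{K}$-determinacy and then read off the numerical bound from the hypothesis $z\notin W_{\rho+1}^{k}$ by an iterated Nakayama argument. Write $z=j_{x}^{k}f$ with $f:(N,x)\rightarrow(P,y)$, and put $T\mathcal{K}f:=tf(\mathfrak{m}_{x}\theta(N)_{x})+f^{\ast}(\mathfrak{m}_{y})\theta(f)_{x}$; as noted in Section 2 this is a $C^{\infty}(N,x)$-submodule of $\mathfrak{m}_{x}\theta(f)_{x}$. By the isomorphism (2.3) together with (2.4), the hypothesis $z\notin W_{\rho+1}^{k}$ says precisely that
\[
d(f,\mathcal{K})=\dim_{\mathbb{R}}\bigl(\mathfrak{m}_{x}\theta(f)_{x}/T\mathcal{K}f\bigr)=\mathrm{codim}_{J^{k}(n,p)}\mathcal{K}z\leqq\rho ;
\]
in particular $Q:=\mathfrak{m}_{x}\theta(f)_{x}/T\mathcal{K}f$ is a finite-dimensional real vector space, and since $\rho<k$ this codimension coincides with the jet-order-independent $\mathcal{K}$-codimension. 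Mather's finite determinacy theorem in [MaIII] guarantees that $f$ is $\mathcal{K}$-$k$-determined as soon as $\mathfrak{m}_{x}^{k+1}\theta(f)_{x}\subseteq\mathfrak{m}_{x}\,T\mathcal{K}f$, so it suffices to establish this inclusion.

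For that I would run the standard iterated Nakayama argument on $Q$. Choosing coordinates near $y$, $\theta(f)_{x}$ is free of rank $p$ over the local ring $C^{\infty}(N,x)$, and $\mathfrak{m}_{x}=(x_{1},\dots,x_{n})$ is finitely generated, so $\mathfrak{m}_{x}\theta(f)_{x}$ is a finitely generated module; hence $Q$, and each $\mathfrak{m}_{x}^{j}Q$, is finitely generated. Consider the descending chain
\[
Q\supseteq\mathfrak{m}_{x}Q\supseteq\mathfrak{m}_{x}^{2}Q\supseteq\cdots .
\]
If $\mathfrak{m}_{x}^{j+1}Q=\mathfrak{m}_{x}^{j}Q$ for some $j$, then Nakayama's lemma forces $\mathfrak{m}_{x}^{j}Q=0$; otherwise each successive strict inclusion of these finite-dimensional spaces drops the $\mathbb{R}$-dimension by at least $1$. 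Since $\dim_{\mathbb{R}}Q\leqq\rho$, we conclude $\mathfrak{m}_{x}^{\rho}Q=0$, i.e.\ $\mathfrak{m}_{x}^{\rho+1}\theta(f)_{x}\subseteq T\mathcal{K}f$, and multiplying once more by $\mathfrak{m}_{x}$ gives $\mathfrak{m}_{x}^{\rho+2}\theta(f)_{x}\subseteq\mathfrak{m}_{x}\,T\mathcal{K}f$. As $k$ and $\rho$ are integers with $k>\rho$, we have $k+1\geqq\rho+2$, so $\mathfrak{m}_{x}^{k+1}\theta(f)_{x}\subseteq\mathfrak{m}_{x}^{\rho+2}\theta(f)_{x}\subseteq\mathfrak{m}_{x}\,T\mathcal{K}f$, which is exactly the required condition; therefore $z$ is $\mathcal{K}$-$k$-determined.

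The main obstacle is bookkeeping rather than ideas: one must pin down the exact shape of the infinitesimal determinacy criterion being invoked — whether the hypothesis needed is $\mathfrak{m}_{x}^{k+1}\theta(f)_{x}\subseteq T\mathcal{K}f$ or the slightly stronger $\mathfrak{m}_{x}^{k+1}\theta(f)_{x}\subseteq\mathfrak{m}_{x}\,T\mathcal{K}f$ — and check that the Nakayama step supplies whichever version is used; as shown above, the gap between the two is precisely absorbed by the strict inequality $k>\rho$, so the argument is insensitive to this choice. A secondary point to handle carefully is that it is finite generation, not Noetherianity, that makes Nakayama applicable over $C^{\infty}(N,x)$, so one should record explicitly that $\mathfrak{m}_{x}^{j}Q$ is finitely generated for every $j$. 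No transversality input from [MaV] is needed here beyond the already-quoted Lemma 3.1, and the hypothesis that $z$ be singular is not actually essential — the same computation applies verbatim to any $z$ with $\mathrm{codim}\,\mathcal{K}z\leqq\rho$.
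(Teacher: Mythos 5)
The paper's proof is a one-line citation: [W, Theorem 1.2(iii)] says that a germ whose $\mathcal{K}$-orbit has codimension $d$ is $\mathcal{K}$-$(d+1)$-determined, and $d\leq\rho<k$ finishes it. You instead unfold that citation into its standard proof (an iterated Nakayama argument plus Mather's infinitesimal criterion from [MaIII]); this is a legitimate, more self-contained route, and your numerics are right. But the setup has a circularity. The hypothesis $z=j_x^k f\notin W_{\rho+1}^k$ bounds the codimension of the orbit of the $k$-jet, which by the isomorphism (2.3) is
\begin{equation*}
\dim_{\mathbb{R}}\bigl(\mathfrak{m}_x\theta(f)_x/(T\mathcal{K}f+\mathfrak{m}_x^{k+1}\theta(f)_x)\bigr)\leqq\rho,
\end{equation*}
\emph{not} directly $\dim Q=\dim\bigl(\mathfrak{m}_x\theta(f)_x/T\mathcal{K}f\bigr)\leqq\rho$. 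These two quantities agree precisely when $\mathfrak{m}_x^{k+1}\theta(f)_x\subseteq T\mathcal{K}f$, which is essentially the determinacy you are trying to prove. Your parenthetical "since $\rho<k$ this codimension coincides with the jet-order-independent $\mathcal{K}$-codimension" is true, but it \emph{is} the Nakayama chain argument; you cannot invoke it to set that argument up, and in particular you cannot yet assert that $Q$ is finite-dimensional.

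The repair is to run the chain in the truncated module from the start. Put $\bar M=\mathfrak{m}_x\theta(f)_x/T\mathcal{K}f$, a finitely generated $C^\infty(N,x)$-module not yet known to be finite-dimensional, and set $Q_k=\bar M/\mathfrak{m}_x^k\bar M$. By (2.3), $\dim Q_k=\mathrm{codim}\,\mathcal{K}z\leqq\rho$, and $Q_k$ is finite-dimensional \emph{a priori}. The descending chain $Q_k\supseteq\mathfrak{m}_xQ_k\supseteq\cdots\supseteq\mathfrak{m}_x^kQ_k=0$ has $k>\rho$ steps, so some step stabilizes at an index $j\leqq\rho$, giving $\mathfrak{m}_x^j\bar M\subseteq\mathfrak{m}_x^k\bar M$; since $j+1\leqq k$ this forces $\mathfrak{m}_x^j\bar M=\mathfrak{m}_x^{j+1}\bar M$, and Nakayama applied to the finitely generated module $\mathfrak{m}_x^j\bar M$ gives $\mathfrak{m}_x^j\bar M=0$, hence $\mathfrak{m}_x^{\rho+1}\theta(f)_x\subseteq T\mathcal{K}f$. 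Only now is $Q$ known to be finite-dimensional (and equal to the $k$-jet codimension), and your remaining steps — multiply by $\mathfrak{m}_x$, invoke Mather's criterion, use $k+1\geqq\rho+2$ — go through unchanged. The strict inequality $k>\rho$ is thus used twice, once to make the chain stabilize below level $k$ and once at the end, rather than only once as your write-up suggests.
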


\begin{proof}
It follows from [W, Theorem 1.2 (iii)] that if $d=$\textrm{codim}%
$\mathcal{K}z$, then $z$ is $\mathcal{K}$-$(d+1)$-determined. Hence, if $z\in
J^{k}(n,p)\backslash W_{\rho+1}^{k}$, then $d\leq\rho$ and $z$ is
$\mathcal{K}$-$k$-determined.
\end{proof}

We define the bundle homomorphism%
\begin{align}
\mathbf{d}  &  :(\pi_{N}^{k})^{\ast}(TN)\longrightarrow(\pi_{k-1}^{k})^{\ast
}(TJ^{k-1}(N,P)),\\
\mathbf{d}_{1}  &  :(\pi_{N}^{k})^{\ast}(TN)\longrightarrow(\pi_{P}^{k}%
)^{\ast}(TP).\nonumber
\end{align}
Let $w=j_{x}^{k}f\in J_{x,y}^{k}(N,P)$ and $z=\pi_{k}^{k}(w)$. Then we have
$j^{k-1}f:(N,x)\rightarrow(J^{k-1}(N,P),z)$ and $d(j^{k-1}f):T_{x}N\rightarrow
T_{z}(J^{k-1}(N,P))$. We set%
\[
\mathbf{d}_{z}(w,\mathbf{v})=(w,d(j^{k-1}f)(\mathbf{v}))\text{ \ and
\ }(\mathbf{d}_{1})_{z}(w,\mathbf{v})=(w,df(\mathbf{v})).
\]
Let $I^{\prime}$ be a symbol of length $k$. Let $\mathbf{K}(\Sigma^{I^{\prime
}})$ denote the kernel subbundle of $(\pi_{N}^{k}|\Sigma^{I^{\prime}%
}(N,P))^{\ast}(TN)$ defined by%
\[
\mathbf{K}(\Sigma^{I^{\prime}})_{w}=(w,\mathrm{Ker}(d_{x}f)).
\]

The following theorem follows from the corresponding assertion for the case
$k=\infty$\ in [B, (7.7)]. This is very important in the proof of Theorem 1.1.

\begin{theorem}
If $I^{\prime}=(i_{1},\cdots,i_{k-2},0,0)$ and $I=(i_{1},\cdots,i_{k-2,}0)$,
then we have%
\[
\mathbf{d}(\mathbf{K}(\Sigma^{I^{\prime}})_{w})\cap(\pi_{k-1}^{k}%
|\Sigma^{I^{\prime}}(N,P))^{\ast}(T(\Sigma^{I}(N,P))_{w}=\{0\}
\]
for any $w\in\Sigma^{I^{\prime}}(N,P)$.
\end{theorem}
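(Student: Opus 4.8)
The plan is to derive this finite-jet statement from its $k=\infty$ version, [B, (7.7)], by lifting everything along the tower projection $\pi_{k}^{\infty}:J^{\infty}(N,P)\to J^{k}(N,P)$. First I would unwind the notation: since $\pi_{k-1}^{k}$ truncates the symbol $I^{\prime}=(i_{1},\cdots,i_{k-2},0,0)$ to length $k-1$, giving $I$, we have $\pi_{k-1}^{k}(\Sigma^{I^{\prime}}(N,P))\subset\Sigma^{I}(N,P)$ (which is what makes the pullback in the statement legitimate), and the assertion amounts to the following: for each $w\in\Sigma^{I^{\prime}}(N,P)$ over a pair $(x,y)$, writing $z=\pi_{k-1}^{k}(w)$ and choosing a smooth germ $f:(N,x)\to(P,y)$ with $j_{x}^{k}f=w$ (a polynomial representative exists, so this is harmless), one has $d(j^{k-1}f)\bigl(\mathrm{Ker}(d_{x}f)\bigr)\cap T_{z}\Sigma^{I}(N,P)=\{0\}$ inside $T_{z}J^{k-1}(N,P)$.

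Next I would pass to $J^{\infty}$. Put $w^{\infty}=j_{x}^{\infty}f$ and let $\hat{I}=(i_{1},\cdots,i_{k-2},0,0,0,\cdots)$ be $I^{\prime}$ with zeros appended. A Thom--Boardman symbol is non-increasing, so a vanishing entry forces all later entries to vanish; hence the full symbol of $w^{\infty}$ is exactly $\hat{I}$, giving $w^{\infty}\in\Sigma^{\hat{I}}(N,P)$ and $\pi_{k-1}^{\infty}(w^{\infty})=z$. The same remark at the level of $(k-1)$-jets shows $(\pi_{k-1}^{\infty})^{-1}(\Sigma^{I}(N,P))=\Sigma^{\hat{I}}(N,P)$, and since $\pi_{k-1}^{\infty}$ is a submersion this identification gives $T_{w^{\infty}}\Sigma^{\hat{I}}(N,P)=(d\pi_{k-1}^{\infty})^{-1}\bigl(T_{z}\Sigma^{I}(N,P)\bigr)$. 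Now suppose $\mathbf{v}\in\mathrm{Ker}(d_{x}f)$ with $d(j^{k-1}f)(\mathbf{v})\in T_{z}\Sigma^{I}(N,P)$. Because $\pi_{k-1}^{\infty}\circ j^{\infty}f=j^{k-1}f$, we have $d(j^{k-1}f)(\mathbf{v})=d\pi_{k-1}^{\infty}\bigl(d(j^{\infty}f)(\mathbf{v})\bigr)$, so $d(j^{\infty}f)(\mathbf{v})\in T_{w^{\infty}}\Sigma^{\hat{I}}(N,P)$; and as $\mathbf{v}\in\mathrm{Ker}(d_{x}f)$, the vector $d(j^{\infty}f)(\mathbf{v})$ lies in $\mathbf{d}(\mathbf{K}(\Sigma^{\hat{I}})_{w^{\infty}})$ in the notation of [B, (7.7)]. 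Applying [B, (7.7)], which says $\mathbf{d}(\mathbf{K}(\Sigma^{\hat{I}})_{w^{\infty}})\cap T_{w^{\infty}}\Sigma^{\hat{I}}(N,P)=\{0\}$, we get $d(j^{\infty}f)(\mathbf{v})=0$; and $d(j^{\infty}f)$ is injective because $\pi_{N}^{\infty}\circ j^{\infty}f=\mathrm{id}_{N}$, so $\mathbf{v}=0$, which is what we wanted.

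The step I expect to require the most care is the bookkeeping over $J^{\infty}(N,P)$: verifying cleanly that $\Sigma^{\hat{I}}(N,P)=(\pi_{k-1}^{\infty})^{-1}(\Sigma^{I}(N,P))$, that $\pi_{k-1}^{\infty}$ restricts to a submersion between the relevant Boardman strata so that tangent spaces pull back as claimed, and that the kernel subbundle and the bundle map $\mathbf{d}$ at level $k$ are compatible with their $\infty$-jet counterparts under $\pi_{k}^{\infty}$. All of the genuinely analytic content — the transversality of the kernel field to the stratum at $\infty$-jet level — is already contained in [B, (7.7)], so once this framework is set up there is nothing further to prove.
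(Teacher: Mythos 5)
Your argument is correct and is exactly the approach the paper intends: the paper offers no written proof beyond the single sentence that the statement ``follows from the corresponding assertion for the case $k=\infty$ in [B, (7.7)],'' and your proposal fills in precisely the bookkeeping that reduction requires (appending zeros to the symbol, identifying $\Sigma^{\hat I}(N,P)=(\pi_{k-1}^{\infty})^{-1}(\Sigma^{I}(N,P))$ via monotonicity of Boardman symbols, pulling back tangent spaces along the submersion $\pi_{k-1}^{\infty}$, and using $d\pi_{k-1}^{\infty}\circ d(j^{\infty}f)=d(j^{k-1}f)$ together with injectivity of $d(j^{\infty}f)$). Nothing is missing.
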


Let us review a general condition on $\mathcal{O}(n,p)$\ for the relative
homotopy principle on the existence level in [An7]. We say that a nonempty
$\mathcal{K}$-invariant open subset $\mathcal{O}(n,p)$ is \emph{admissible} if
$\mathcal{O}(n,p)$ consists of all regular jets and a finite number of
disjoint $\mathcal{K}$-invariant nonempty submanifolds $V^{i}(n,p)$ of
codimension $\rho_{i}$ ($1\leq i\leq\iota$) such that the following properties
(H-i) to (H-v) are satisfied.

(H-i) $V^{i}(n,p)$ consists of singular $k$-jets of rank $r_{i}$, namely,
$V^{i}(n,p)\subset\Sigma^{n-r_{i}}(n,p)$.

(H-ii) For each $i$, the set $\mathcal{O}(n,p)\backslash\{ \cup_{j=i}^{\iota
}V^{j}(n,p)\}$ is an open subset.

(H-iii) For each $i$ with $\rho_{i}\leq n$, there exists a $\mathcal{K}%
$-invariant submanifold $V^{i}(n,p)^{(k-1)}$\ of $J^{k-1}(n,p)$\ such that
$V^{i}(n,p)$\ is open in $(\pi_{k-1}^{k})^{-1}(V^{i}(n,p)^{(k-1)})$.

(H-iv) If $n\geqq p\geqq2$, then $V^{1}(n,p)=\Sigma^{n-p+1,0}(n,p)$.

\noindent Here, $\Sigma^{n-p+1,0}(n,p)$ denotes the Thom-Boardman manifold in
$J^{k}(n,p)$, which consists of $\mathcal{K}$-orbits of fold jets. Let
$V^{i}(N,P)$ denote the subbundle of $J^{k}(N,P)$\ associated to $V^{i}(n,p)$.
Let $\mathbf{K}(V^{i})$ be the kernel bundle in $(\pi_{N}^{k})^{\ast
}(TN)|_{V^{i}(N,P)}$\ defined by $\mathbf{K}(V^{i})_{z}=(z,\mathrm{Ker}%
(d_{x}f))$.

(H-v) For each $i$ with $\rho_{i}\leq n$ and any $z\in V^{i}(N,P)$, we have%
\begin{equation}
\mathbf{d}(\mathbf{K}(V^{i})_{z})\cap(\pi_{k-1}^{k}|V^{i}(N,P))^{\ast}%
(T(V^{i}(N,P)^{(k-1)})_{z}=\{0\}.
\end{equation}

Then we have proved the following theorem in [An7, Theorem 0.1].

\begin{theorem}
Let $k\geqq n-|n-p|+2$. Let $n\geqq p\geqq2$ or $n<p$. Let $\mathcal{O}(n,p)$
denote an admissible open subspace of $J^{k}(n,p)$. Then the relative homotopy
principle in (h-P) holds for $\mathcal{O}$-regular maps.
\end{theorem}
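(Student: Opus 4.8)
The excerpt ends by recalling Theorem 3.4 (= [An7, Theorem 0.1]); the task of Section 3 is to derive Theorems 1.1 and 1.2 from it, and here is how I would carry out the derivation of Theorem 1.1. The plan is to wedge an \emph{admissible} open subspace between the good jets and $\mathcal{O}(n,p)$, to recognise that its defining conditions (H-i)--(H-v) are supplied by Proposition 2.1 and Theorem 3.3, and then to push the given section into it. Concretely, set $\mathcal{O}^{\prime}(n,p)=\mathcal{O}(n,p)\setminus W_{n+1}^{k}(n,p)$; this is open by Lemma 3.1, is $\mathcal{K}$-invariant, and still contains every regular jet and, when $n\geqq p$, every fold jet (on $\Sigma^{n-p+1,0}(n,p)$ the $\mathcal{K}$-codimension equals $n-p+1\leqq n$). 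A singular jet of $\mathcal{O}^{\prime}(n,p)$ has $\mathcal{K}$-codimension $\leqq n$, hence lies on a Boardman manifold of codimension $\leqq n$; by Proposition 2.1 together with the hypothesis $k\geqq n-|n-p|+2$ the length-$k$ Boardman symbol of such a manifold must have the shape $I=(i_{1},\dots,i_{k-2},0,0)$ (a nonzero $i_{k}$ would force codimension $\geqq|n-p|+k\geqq n+2$; a nonzero $i_{k-1}$ alone would force it $\geqq|n-p|+(k-1)\geqq n+1$). Since only finitely many length-$k$ symbols occur, $\mathcal{O}^{\prime}(n,p)=\{\text{regular jets}\}\cup\bigcup_{i=1}^{\iota}V^{i}(n,p)$ with $V^{i}(n,p)=\Sigma^{I_{i}}(n,p)\cap\mathcal{O}^{\prime}(n,p)$ a finite disjoint family of $\mathcal{K}$-invariant submanifolds (Lemma 2.3).

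I would then check that $\mathcal{O}^{\prime}(n,p)$ is admissible, ordering the $V^{i}$ by non-decreasing codimension with $V^{1}=\Sigma^{n-p+1,0}(n,p)$ when $n\geqq p\geqq2$ (legitimate, since the fold stratum has the smallest codimension among singular strata and lies entirely in $\mathcal{O}^{\prime}(n,p)$). Conditions (H-i) and (H-iv) are then immediate. For (H-ii) one writes $\bigcup_{j\geqq i}V^{j}(n,p)=\mathcal{O}^{\prime}(n,p)\cap\{z:\mathrm{codim}\,\Sigma^{I(z)}(n,p)\geqq\rho_{i}\}$ and uses that the closure of a Boardman manifold contains no Boardman manifold of smaller codimension, so this set is closed in $\mathcal{O}^{\prime}(n,p)$. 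For (H-iii) take $V^{i}(n,p)^{(k-1)}=\Sigma^{(i_{1},\dots,i_{k-2},0)}(n,p)$; since $i_{k-1}=0$ the $k$th Boardman symbol of any jet over it vanishes, so $(\pi_{k-1}^{k})^{-1}(V^{i}(n,p)^{(k-1)})=\Sigma^{I_{i}}(n,p)$, in which $V^{i}(n,p)$ is open. Finally (H-v) is precisely Theorem 3.3 for $I^{\prime}=I_{i}$ and $I=(i_{1},\dots,i_{k-2},0)$, because $\mathbf{K}(V^{i})\subseteq\mathbf{K}(\Sigma^{I_{i}})$ and $T(V^{i}(N,P)^{(k-1)})\subseteq T(\Sigma^{(i_{1},\dots,i_{k-2},0)}(N,P))$. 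Hence $\mathcal{O}^{\prime}(n,p)$ is admissible and Theorem 3.4 yields the relative homotopy principle in (h-P) for $\mathcal{O}^{\prime}$-regular maps.

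To pass back to $\mathcal{O}(n,p)$ relative to $C$, the only subtlety is that the given $g$ is merely $\mathcal{O}$-regular, so $j^{k}g$ may leave $\mathcal{O}^{\prime}(N,P)$ near $C$. However $j^{k}g$ meets only finitely many Boardman manifolds; letting $T$ be the set of their symbols over a neighbourhood of $C$ and setting $\mathcal{O}^{\prime\prime}(n,p)=\mathcal{O}^{\prime}(n,p)\cup\bigcup_{I\in T}\bigl(\Omega^{I}(n,p)\cap\mathcal{O}(n,p)\bigr)$ — open by Lemma 2.2 — the argument above applies verbatim (any newly added stratum of codimension $\leqq n$ again ends in $0,0$), so $\mathcal{O}^{\prime\prime}(n,p)$ is admissible and $g$ is $\mathcal{O}^{\prime\prime}$-regular near $C$. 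Since $\mathcal{O}(N,P)\setminus\mathcal{O}^{\prime\prime}(N,P)\subseteq W_{n+1}^{k}(N,P)$, which has fibrewise codimension $\geqq n+1>\dim N$ (a Mather-type estimate; cf. [MaV]), a general-position homotopy stationary on a neighbourhood of $C$ and staying inside the open set $\mathcal{O}(N,P)$ carries $s$ to a section $s_{1}\in\Gamma_{\mathcal{O}^{\prime\prime}}(N,P)$ with $j^{k}g=s_{1}$ near $C$. Applying Theorem 3.4 to $\mathcal{O}^{\prime\prime}(n,p)$ with the data $(s_{1},g,C)$ produces an $\mathcal{O}^{\prime\prime}$-regular — hence $\mathcal{O}$-regular — map $f:N\rightarrow P$ with $j^{k}f$ homotopic to $s_{1}$, and so to $s$, relative to a neighbourhood of $C$ in $\Gamma_{\mathcal{O}}(N,P)$.

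The heart of the argument, and the reason Theorem 3.3 was flagged as very important, is the bookkeeping in the second paragraph: the combination of Proposition 2.1 with the bound $k\geqq n-|n-p|+2$ forces every singular stratum of $\mathcal{K}$-codimension $\leqq n$ to have its length-$k$ Boardman symbol ending in two zeros, which is exactly the hypothesis under which Theorem 3.3 delivers (H-v). The other genuine input is the codimension estimate $\mathrm{codim}_{J^{k}(n,p)}W_{n+1}^{k}(n,p)\geqq n+1$ used in the third paragraph; the closure behaviour of Boardman manifolds invoked for (H-ii), the relative general-position argument near $C$, and the trivial implication ``$\mathcal{O}^{\prime\prime}$-regular $\Rightarrow\mathcal{O}$-regular'' are routine. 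Theorem 1.2 is then the special case $\mathcal{O}(n,p)=\mathcal{O}_{\ell}^{k}(n,p)$ of Theorem 1.1 for finite $k$, the case $k=\infty$ requiring the analogous argument with $J^{\infty}$.
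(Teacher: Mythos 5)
The statement you were asked to prove is Theorem 3.4, whose hypothesis is that $\mathcal{O}(n,p)$ is \emph{admissible}. This is exactly [An7, Theorem 0.1], and the present paper gives no proof of it; the only ``proof'' in the text is the citation. Your proposal explicitly takes Theorem 3.4 as a black box and derives Theorem 1.1 from it, so as a proof of the statement in question it is circular: you cannot invoke Theorem 3.4 to establish Theorem 3.4. What you have actually sketched is a variant of the paper's proof of Theorem 1.1, which is a different theorem (its hypothesis on $\mathcal{O}(n,p)$ is that it be $\mathcal{K}$-invariant, open, contain all regular jets, and contain the fold jets when $n\geqq p\geqq 2$ — \emph{not} that it be admissible; the whole point of Theorem 1.1 is to derive admissibility).

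Even read as a proof of Theorem 1.1, your route is more elaborate than the paper's. The paper shows directly that $\mathcal{O}(n,p)$ itself is admissible: it decomposes the singular part into the finitely many sets $V_{J}(n,p)=\mathcal{O}(n,p)\cap\Sigma^{J}(n,p)$ indexed by length-$k$ Boardman symbols $J$, observes that conditions (H-iii) and (H-v) are only demanded of strata with $\rho_{i}\leqq n$, and that for those strata Proposition 2.1 combined with $k\geqq n-|n-p|+2$ forces $J$ to end in two zeros, so Lemma 3.5 and Theorem 3.3 apply. There is no need to first cut down to $\mathcal{O}'(n,p)=\mathcal{O}(n,p)\setminus W_{n+1}^{k}$ and then restore the higher-codimension strata by a general-position homotopy rel a neighbourhood of $C$: the definition of admissibility already exempts strata of codimension $>n$ from (H-iii) and (H-v), so the detour through $\mathcal{O}'$, $\mathcal{O}''$ and the codimension estimate for $W_{n+1}^{k}$ buys nothing and adds bookkeeping (tracking which Boardman symbols $j^{k}g$ meets near $C$) that the paper avoids entirely.
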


We set%
\[
V_{I}(n,p)=\mathcal{O}(n,p)\cap\Sigma^{I}(n,p).
\]
Let $J=(j_{1},\cdots,j_{k})$ be a symbol of a singular jet with codim$\Sigma
^{J}(n,p)\leq n$. If $k\geqq n-|n-p|+2$, we have by Proposition 2.1 that
$i_{k-1}=i_{k}=0$. Indeed, if $i_{k-1}>0$, then%
\[
\text{codim}\Sigma^{J}(n,p)\geqq|n-p|+k-1\geqq n+1.
\]
So we set $J=(j_{1},\cdots,j_{k-2},0,0)$, $J^{\ast}=(i_{1},\cdots,i_{k-2,}0)$
and%
\[
V_{J^{\ast}}(n,p)^{(k-1)}=\pi_{k-1}^{k}(\mathcal{O}(n,p))\cap\Sigma^{J\ast
}(n,p).
\]

\begin{lemma}
Let $J=(j_{1},\cdots,j_{k-2},0,0)$ and $J^{\ast}=(j_{1},\cdots,j_{k-2,}0)$ be
as above. Then $V_{J}(n,p)$ is open in $(\pi_{k-1}^{k})^{-1}(V_{J^{\ast}%
}(n,p)^{(k-1)})$.
\end{lemma}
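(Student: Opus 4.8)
The plan is to unwind the definitions on both sides of the claimed identity and reduce the statement to the fact that $\pi_{k-1}^k$ is an open continuous map together with the relation between the Boardman strata $\Sigma^J(n,p)$ and $\Sigma^{J^\ast}(n,p)$ under the projection $\pi_{k-1}^k$. First I would recall that $\Sigma^J(n,p) = (\pi_{k-1}^k)^{-1}(\Sigma^{J^\ast}(n,p)) \cap \Sigma^J(n,p)$ in an appropriate sense — more precisely, that $\Sigma^J(n,p)$ is contained in $(\pi_{k-1}^k)^{-1}(\Sigma^{J^\ast}(n,p))$, since the symbol $J^\ast = (j_1,\dots,j_{k-2},0)$ is exactly the truncation of $J = (j_1,\dots,j_{k-2},0,0)$ and the Thom--Boardman symbol of a jet is determined compatibly with jet truncation. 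This is the standard Boardman symbol stability, available from [Bo] and [MaTB], and is already implicit in the statement of Theorem 3.3 (the Boardman analogue of [B, (7.7)]).

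Next I would use openness of $\mathcal{O}(n,p)$ in $J^k(n,p)$ and the fact that $\pi_{k-1}^k : J^k(n,p) \to J^{k-1}(n,p)$ is an open surjection (it is a bundle projection with linear-space fibers), so $\pi_{k-1}^k(\mathcal{O}(n,p))$ is open in $J^{k-1}(n,p)$. By definition $V_{J^\ast}(n,p)^{(k-1)} = \pi_{k-1}^k(\mathcal{O}(n,p)) \cap \Sigma^{J^\ast}(n,p)$, which is therefore open in $\Sigma^{J^\ast}(n,p)$. Consequently $(\pi_{k-1}^k)^{-1}(V_{J^\ast}(n,p)^{(k-1)})$ is open in $(\pi_{k-1}^k)^{-1}(\Sigma^{J^\ast}(n,p))$, and since $\Sigma^J(n,p)$ is by Lemma 2.2 (applied to $\Omega^J$, or directly by the Boardman stratification being locally closed with $\Sigma^J$ open in $\Omega^J$) relatively open inside the relevant ambient, I can intersect with $\Sigma^J(n,p)$ to stay inside the correct stratum.

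The actual verification then reads: $V_J(n,p) = \mathcal{O}(n,p) \cap \Sigma^J(n,p)$. A point $w \in \Sigma^J(n,p)$ satisfies $\pi_{k-1}^k(w) \in \Sigma^{J^\ast}(n,p)$ by symbol stability, so $w \in (\pi_{k-1}^k)^{-1}(V_{J^\ast}(n,p)^{(k-1)})$ iff $\pi_{k-1}^k(w) \in \pi_{k-1}^k(\mathcal{O}(n,p))$; hence
\[
(\pi_{k-1}^k)^{-1}(V_{J^\ast}(n,p)^{(k-1)}) \cap \Sigma^J(n,p) = (\pi_{k-1}^k)^{-1}\bigl(\pi_{k-1}^k(\mathcal{O}(n,p))\bigr) \cap \Sigma^J(n,p).
\]
Within this set, $V_J(n,p) = \mathcal{O}(n,p) \cap \Sigma^J(n,p)$ is cut out by intersecting with $\mathcal{O}(n,p)$, and since $\mathcal{O}(n,p)$ is open in $J^k(n,p)$ and is contained in $(\pi_{k-1}^k)^{-1}(\pi_{k-1}^k(\mathcal{O}(n,p)))$, it follows that $V_J(n,p)$ is open in $(\pi_{k-1}^k)^{-1}(V_{J^\ast}(n,p)^{(k-1)})$.

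I expect the main obstacle to be the bookkeeping around the symbol-stability claim, namely that $w \in \Sigma^J(n,p)$ forces $\pi_{k-1}^k(w) \in \Sigma^{J^\ast}(n,p)$ with $J^\ast$ precisely the $(k-1)$-truncation of $J$. This requires knowing that the Thom--Boardman symbol of a $(k-1)$-jet is the truncation of the symbol of any $k$-jet lying over it, which is a standard but slightly delicate property of the Jacobian-extension construction (one must be careful that the conditions defining $\Sigma^{J^\ast}$ at level $k-1$ are literally the first $k-1$ of the conditions defining $\Sigma^J$ at level $k$); I would cite [Bo, Section 6] and [MaTB] for this. Everything else is soft point-set topology: openness of bundle projections and of preimages, and the definition of the subspace topology on the Boardman strata.
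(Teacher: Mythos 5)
Your argument follows the same route as the paper's proof, but it relies on only one of the two inclusions that are actually needed, and the missing one is precisely the one that makes the openness conclusion close up.

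You assert (and are careful to say you will only use) the forward inclusion $\Sigma^{J}(n,p)\subset(\pi_{k-1}^{k})^{-1}(\Sigma^{J^{\ast}}(n,p))$, i.e.\ symbol stability under truncation. This gives you
\[
(\pi_{k-1}^{k})^{-1}(V_{J^{\ast}}(n,p)^{(k-1)})\cap\Sigma^{J}(n,p)=(\pi_{k-1}^{k})^{-1}\bigl(\pi_{k-1}^{k}(\mathcal{O}(n,p))\bigr)\cap\Sigma^{J}(n,p),
\]
and since $\mathcal{O}(n,p)$ is open in $J^{k}(n,p)$ you correctly conclude that $V_{J}(n,p)=\mathcal{O}(n,p)\cap\Sigma^{J}(n,p)$ is open \emph{inside} $(\pi_{k-1}^{k})^{-1}(V_{J^{\ast}}(n,p)^{(k-1)})\cap\Sigma^{J}(n,p)$. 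But the lemma asks for openness inside the (a priori larger) set $(\pi_{k-1}^{k})^{-1}(V_{J^{\ast}}(n,p)^{(k-1)})$, and your argument does not bridge this gap: $\Sigma^{J}(n,p)$ is a positive-codimension submanifold, not an open subset, so ``open in $B\cap\Sigma^{J}$'' does not by itself imply ``open in $B$.''

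What is missing is the reverse inclusion $(\pi_{k-1}^{k})^{-1}(\Sigma^{J^{\ast}}(n,p))\subset\Sigma^{J}(n,p)$, which the paper states as an outright equality $\Sigma^{J}(n,p)=(\pi_{k-1}^{k})^{-1}(\Sigma^{J^{\ast}}(n,p))$. This reverse inclusion is where the specific shape of $J$ and $J^{\ast}$ enters: because $J^{\ast}=(j_{1},\dots,j_{k-2},0)$ already ends in a zero and Boardman symbols are non-increasing, any $k$-jet lying over a point of $\Sigma^{J^{\ast}}$ is forced to have $k$-th symbol entry $0$, hence lies in $\Sigma^{J}$. With that in hand, $(\pi_{k-1}^{k})^{-1}(V_{J^{\ast}}(n,p)^{(k-1)})\subset\Sigma^{J}(n,p)$, so the intersection with $\Sigma^{J}(n,p)$ in your displayed identity is vacuous and the openness you proved is openness in the full preimage, as required. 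Once you add this one sentence your proof coincides with the paper's; the preliminary material in your second paragraph about $\pi_{k-1}^{k}(\mathcal{O}(n,p))$ being open and Lemma 2.2 is not actually used and can be dropped.
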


\begin{proof}
It is evident that%
\[
\Sigma^{J}(n,p)=(\pi_{k-1}^{k})^{-1}(\Sigma^{J^{\ast}}(n,p))\text{ \ and
\ }\mathcal{O}(n,p)\subset(\pi_{k-1}^{k})^{-1}(\pi_{k-1}^{k}(\mathcal{O}%
(n,p))).
\]
So we have $V_{J}(n,p)\subset(\pi_{k-1}^{k})^{-1}(V_{J^{\ast}}(n,p)^{(k-1)})$.
Since $\pi_{k-1}^{k}$ is an open map, we have that $V_{J}(n,p)$ is an open
subset of $(\pi_{k-1}^{k})^{-1}(V_{J^{\ast}}(n,p)^{(k-1)})$.
\end{proof}

Let us prove Theorem 1.1.

\begin{proof}
[Proof of Theorem 1.1]By Theorem 3.4 it is enough to prove that $\mathcal{O}%
(n,p)$ is admissible. Let $J$\ be a symbol of length $k$. By Lemma 2.3,
$V_{J}(n,p)$ is $\mathcal{K}$-invariant. We have that

(H1) $\mathcal{O}(n,p)$\ is decomposed into a finite union of all $V_{J}(n,p)$,

(H2) For each symbol $J$, the set $\mathcal{O}(n,p)\cap\Omega^{J}(n,p)$\ is an
open subset of $\mathcal{O}(n,p)$,

(H3) $V_{J}(n,p)$ is open in $(\pi_{k-1}^{k})^{-1}(V_{J^{\ast}}(n,p)^{(k-1)})$
by lemma 3.5,

(H4) If $n\geqq p\geqq2$, then $\mathcal{O}(n,p)\supset\Sigma^{n-p+1,0}(n,p)$
by the assumption,

(H5) Property (3.2) holds for $V_{J}(n,p)$ by Theorem 3.3 and Lemma 3.5.

Since $\mathcal{O}(n,p)$ satisfies the properties (H1) to (H5), we have proved
Theorem 1.1.
\end{proof}

We next prove Theorem 1.2.

\begin{proof}
[Proof of Theorem 1.2]If $\ell$ is finite, then it follows from Lemma 3.2 that
if $k>\ell$, then any $k$-jet $z$ of $J^{k}(n,p)\backslash W_{\ell+1}^{k}$ is
$\mathcal{K}$-$k$-determined and we have%
\[
(\pi_{k}^{\infty})^{-1}(\mathcal{O}_{\ell}^{k}(n,p))=\mathcal{O}_{\ell
}^{\infty}(n,p).
\]
Therefore, if $k\geqq\max\{\ell+1,n-|n-p|+2\}$, then the relative homotopy
principle in (h-P) holds for $\mathcal{O}_{\ell}^{k}$-regular maps by Theorem
1.1 and also for $\mathcal{O}_{\ell}^{\infty}$-regular maps.
\end{proof}

\begin{corollary}
Under the same assumption of Theorem 1.2, given a map $f:N\rightarrow P$ is
homotopic to an $\mathcal{O}_{\ell}^{k}$-regular map if and only if there
exists a section $s\in\Gamma_{\mathcal{O}_{\ell}^{k}}(N,P)$ such that $\pi
_{P}^{k}\circ s$ is homotopic to $f$.
\end{corollary}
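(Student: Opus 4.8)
The plan is to obtain this statement as an essentially immediate consequence of Theorem 1.2, applied with an empty relative part. The asserted equivalence splits into two implications, one of which requires no work beyond unwinding definitions.

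For the ``only if'' direction, suppose $f$ is homotopic to an $\mathcal{O}_{\ell}^{k}$-regular map $g$. Since $g$ is $\mathcal{O}_{\ell}^{k}$-regular, its $k$-jet extension $j^{k}g$ is a (smooth, hence continuous) section of $\pi_{N}^{k}|\mathcal{O}_{\ell}^{k}(N,P)$, so setting $s=j^{k}g$ produces an element of $\Gamma_{\mathcal{O}_{\ell}^{k}}(N,P)$. Moreover $\pi_{P}^{k}\circ s=\pi_{P}^{k}\circ j^{k}g=g$, which is homotopic to $f$ by hypothesis. This settles the first implication.

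For the ``if'' direction, suppose $s\in\Gamma_{\mathcal{O}_{\ell}^{k}}(N,P)$ satisfies $\pi_{P}^{k}\circ s$ homotopic to $f$. I would invoke Theorem 1.2 with $C=\emptyset$: the hypothesis in (h-P) requiring an $\mathcal{O}_{\ell}^{k}$-regular map defined on a neighborhood of $C$ whose $k$-jet agrees with $s$ there is vacuous when $C=\emptyset$, so the conclusion yields an $\mathcal{O}_{\ell}^{k}$-regular map $h:N\rightarrow P$ together with a homotopy $s_{\lambda}$ in $\Gamma_{\mathcal{O}_{\ell}^{k}}(N,P)$ with $s_{0}=s$ and $s_{1}=j^{k}h$. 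Post-composing this homotopy with the continuous projection $\pi_{P}^{k}$ gives a homotopy $\lambda\mapsto\pi_{P}^{k}\circ s_{\lambda}$ of maps $N\rightarrow P$ from $\pi_{P}^{k}\circ s$ to $\pi_{P}^{k}\circ j^{k}h=h$. Hence $f$ is homotopic to $\pi_{P}^{k}\circ s$, which is homotopic to $h$, and $h$ is $\mathcal{O}_{\ell}^{k}$-regular, as required. When $k=\infty$ the same argument applies verbatim, using the $k=\infty$ clause of Theorem 1.2.

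There is no genuine obstacle here; the entire content is carried by Theorem 1.2. The only points meriting a word of care are that a homotopy of sections descends, under the continuous map $\pi_{P}^{k}$, to a homotopy of the underlying maps (so that the homotopy class of $f$ is preserved), and that the choice $C=\emptyset$ legitimately discharges the auxiliary hypothesis built into (h-P). Both are routine.
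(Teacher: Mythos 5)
Your proof is correct and is essentially the intended (and unstated) argument: the paper presents this corollary as an immediate consequence of Theorem 1.2, and your two directions — taking $s=j^{k}g$ for the forward implication, and invoking (h-P) with $C=\emptyset$ plus post-composition of the section homotopy with $\pi_{P}^{k}$ for the reverse — are exactly what is needed.
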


\begin{corollary}
Let $\mathfrak{h}_{\ell}(P)$ be as in Introduction. Then the homotopy class of
a homotopy equivalence $f:P\rightarrow P$ lies in $\mathfrak{h}_{\ell}(P)$\ if
and only if $j^{\infty}f$ is homotopic to a section in $\Gamma_{\mathcal{O}%
_{\ell}^{\infty}}(N,P)$.
\end{corollary}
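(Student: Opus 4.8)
The plan is to reduce the statement directly to Corollary 3.6, applied with $N=P$, $k=\infty$ and (implicitly) $C=\emptyset$, together with the elementary observation that the property of being a homotopy equivalence is invariant under homotopy. The passage between the two sides is carried out by the mutually compatible operations $f\mapsto j^{\infty}f$ and $s\mapsto\pi_{P}^{\infty}\circ s$: a homotopy of maps $P\to P$, which we may assume smooth in the homotopy parameter, induces by taking $\infty$-jets a homotopy of sections of $J^{\infty}(P,P)$, while composing a homotopy of sections of $J^{\infty}(P,P)$ with the continuous projection $\pi_{P}^{\infty}$ yields a homotopy of maps; these are compatible because $\pi_{P}^{\infty}\circ j^{\infty}f=f$. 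Recall also that $\mathcal{O}_{\ell}^{\infty}(P,P)$ is open in $J^{\infty}(P,P)$, so that ``$j^{\infty}f$ is homotopic to a section in $\Gamma_{\mathcal{O}_{\ell}^{\infty}}(P,P)$'' means that there is a homotopy through sections of $J^{\infty}(P,P)$ from $j^{\infty}f$ to a section whose image lies in $\mathcal{O}_{\ell}^{\infty}(P,P)$.

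First I would prove the forward implication. Suppose $[f]\in\mathfrak{h}_{\ell}(P)$. By the definition in the Introduction there is an $\mathcal{O}_{\ell}^{\infty}$-regular homotopy equivalence $g:P\to P$ homotopic to $f$; after a standard smoothing we may take the homotopy $f_{\lambda}$ with $f_{0}=f$, $f_{1}=g$ to be smooth in $\lambda$. Then $\lambda\mapsto j^{\infty}f_{\lambda}$ is a homotopy of sections of $J^{\infty}(P,P)$ from $j^{\infty}f$ to $j^{\infty}g$, and since $g$ is $\mathcal{O}_{\ell}^{\infty}$-regular we have $j^{\infty}g(P)\subset\mathcal{O}_{\ell}^{\infty}(P,P)$, i.e. $j^{\infty}g\in\Gamma_{\mathcal{O}_{\ell}^{\infty}}(P,P)$. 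Hence $j^{\infty}f$ is homotopic to a section in $\Gamma_{\mathcal{O}_{\ell}^{\infty}}(P,P)$.

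Next I would prove the converse. Suppose $j^{\infty}f$ is homotopic, through sections of $J^{\infty}(P,P)$, to some $s\in\Gamma_{\mathcal{O}_{\ell}^{\infty}}(P,P)$. Composing that homotopy with $\pi_{P}^{\infty}$ shows $\pi_{P}^{\infty}\circ s$ is homotopic to $\pi_{P}^{\infty}\circ j^{\infty}f=f$. By Corollary 3.6 (with $N=P$, $k=\infty$), $f$ is then homotopic to an $\mathcal{O}_{\ell}^{\infty}$-regular map $g:P\to P$. Since $f$ is a homotopy equivalence and $g$ is homotopic to $f$, the map $g$ is also a homotopy equivalence, so $g$ is an $\mathcal{O}_{\ell}^{\infty}$-regular homotopy equivalence and $[f]=[g]\in\mathfrak{h}_{\ell}(P)$.

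The substantive content is entirely carried by Theorem 1.2 and Corollary 3.6, so there is no real obstacle; the only points needing care are the routine facts that a homotopy between smooth maps may be taken smooth (so that $j^{\infty}$ is defined along it) and that composition with $\pi_{P}^{\infty}$ sends homotopies of sections to homotopies of maps, while the homotopy-invariance of being a homotopy equivalence is immediate. Accordingly I expect the write-up to be brief, its purpose being to record the reformulation of Corollary 3.6 in the language of the filtration (1.1) that will be used in Section 5.
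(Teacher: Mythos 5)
Your argument is correct and is essentially the paper's intended reasoning: the paper states Corollary 3.7 without a written proof, treating it as an immediate consequence of Corollary 3.6 (applied with $N=P$, $k=\infty$) together with the definition of $\mathfrak{h}_{\ell}(P)$ and the two routine facts you isolate — that a smooth homotopy of maps induces a homotopy of $\infty$-jet sections, and that composing a homotopy of sections with $\pi_{P}^{\infty}$ gives a homotopy of the underlying maps — plus the observation that being a homotopy equivalence is homotopy-invariant, which upgrades the $\mathcal{O}_{\ell}^{\infty}$-regular map produced by Corollary 3.6 to an $\mathcal{O}_{\ell}^{\infty}$-regular homotopy equivalence.
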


Here we give two remarks.

\begin{remark}
\textrm{Let }$W_{\infty}^{\infty}$\textrm{ denote the subspace of }$J^{\infty
}(n,p)$\textrm{ which consists of all jets }$z$\textrm{ such that any smooth
map germ }$f$\textrm{ with }$z=j^{\infty}f$\textrm{ is not finitely
determined. Let }$W_{\infty}^{\infty}(N,P)$\textrm{ is the subbundle of
}$J^{\infty}(N,P)$\textrm{ associated to }$W_{\infty}^{\infty}$\textrm{. It
has been proved (see, for example, [W, Theorem\ 5.1]) that }$W_{\infty
}^{\infty}$\textrm{ is not of finite codimension in }$J^{\infty}%
(n,p)$\textrm{. Consequently, the space of all smooth maps }$f:N\rightarrow
P$\textrm{ with }$j^{\infty}f(N)\subset J^{\infty}(N,P)\backslash W_{\infty
}^{\infty}(N,P)$\textrm{ is dense in }$C^{\infty}(N,P)$\textrm{. In other
words if }$N$\textrm{ is compact, then a smooth map }$f:N\rightarrow
P$\textrm{ has an integer }$\ell$\textrm{ such that }$f$\textrm{ is homotopic
to an }$O_{\ell}^{\infty}$\textrm{-regular map.}
\end{remark}

\begin{remark}
\textrm{It is very important to study the topology of the space }$W_{\ell
+1}^{k}(n,p)$\textrm{ and obstructions for finding an }$O_{\ell}^{k}%
$\textrm{-regular map. The Thom polynomials related to }$W_{\ell+1}^{k}%
(n,p)$\textrm{ have been studied in the dimensions }$n=p\leqq8$\textrm{ in [O]
and [F-R].}
\end{remark}

\section{Nonexistence theorems}

In this section we will discuss the nonexistence of $\mathcal{O}_{\ell}^{k}%
$-regular maps $f:N\rightarrow P$. Let $W_{\ell+1}^{k}(N,P)$ denote the
subbundle of $J^{k}(N,P)$ associated to $W_{\ell+1}^{k}(n,p)$. By the homotopy
principle for $\mathcal{O}_{\ell}^{k}$-regular maps in Theorem 1.2, the
existence of a section of $J^{k}(N,P)\backslash W_{\ell+1}^{k}(N,P)$ over $N$
is equivalent to the existence of an $\mathcal{O}_{\ell}^{k}$-regular map.
However, it is not so easy to find obstructions associated to $W_{\ell+1}%
^{k}(N,P)$ such as Thom polynomials of $W_{\ell+1}^{k}(N,P)$, and so we will
adopt a method applied in [An1], [I-K] and [duP4] in this section.

For $k\geqq p+1$, let $\Sigma(n,p;k)$ denote the algebraic subset of all
$C^{\infty}$-nonstable $k$-jets of $J^{k}(n,p)$ defined in [MaV]. Note that
for $k^{\prime}>k$, $(\pi_{k}^{k^{\prime}})^{-1}(\Sigma(n,p;k))=\Sigma
(n,p;k^{\prime})$. We have proved the following\ proposition in [An1,
Corollary 5.6].

\begin{proposition}
Let $k\geqq p+1$. If%
\[
(p-n+i)(\frac{1}{2}i(i+1)-p+n)-i^{2}\geqq n,
\]
then we have that $\Sigma^{i}(n,p)\subset\Sigma(n,p;k)$.
\end{proposition}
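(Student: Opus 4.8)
The plan is to prove, in contrapositive form, that as soon as the displayed inequality holds, \emph{no} germ $f\colon(\mathbb{R}^n,0)\rightarrow(\mathbb{R}^p,0)$ of corank $i$ is $C^{\infty}$-stable; since for $k\geqq p+1$ the set $\Sigma(n,p;k)$ is exactly the set of $k$-jets which are not the $k$-jet of an infinitesimally stable germ (Mather~[MaV], together with the identification of stability and infinitesimal stability from [MaIII], [MaIV]), this gives $\Sigma^{i}(n,p)\subset\Sigma(n,p;k)$. (The inclusion is vacuous if $\Sigma^{i}(n,p)=\varnothing$.) So I would fix $z\in\Sigma^{i}(n,p)$, assume $z\notin\Sigma(n,p;k)$, pick an infinitesimally stable germ $f$ with $j^{k}f(0)=z$ (legitimate since $k\geqq p+1$), and note $\mathrm{rank}\,d_{0}f=n-i$.

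Next I would pass to the genotype of $f$. By the structure of stable germs (splitting lemma; [MaIV], [Mart]) $f$ is $\mathcal{A}$-equivalent to an unfolding
\[
(x,u)\longmapsto(h_{u}(x),u),\qquad x\in\mathbb{R}^{i},\ u\in\mathbb{R}^{n-i},\quad h_{u}\colon(\mathbb{R}^{i},0)\rightarrow(\mathbb{R}^{q},0),\ q=p-n+i,
\]
of $h:=h_{0}$, where $j^{1}h=0$, and $f$ is infinitesimally stable only if this unfolding is $\mathcal{A}_{e}$-versal, which forces $h$ to have finite singularity type and
\[
n-i\ \geqq\ \mathcal{A}_{e}\text{-}\mathrm{codim}\,h:=\dim_{\mathbb{R}}\theta(h)_{0}\big/\bigl(th(\theta(\mathbb{R}^{i})_{0})+\omega h(\theta(\mathbb{R}^{q})_{0})\bigr),
\]
where $th$ is as in Section~2 and $\omega h\colon\theta(\mathbb{R}^{q})_{0}\rightarrow\theta(h)_{0}$ is composition with $h$. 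Hence it is enough to establish, for \emph{every} germ $h\colon(\mathbb{R}^{i},0)\rightarrow(\mathbb{R}^{q},0)$ with $j^{1}h=0$, the lower bound
\[
\mathcal{A}_{e}\text{-}\mathrm{codim}\,h\ \geqq\ (p-n+i)\Bigl(\tfrac{1}{2}i(i+1)-(p-n)\Bigr)-i^{2}-(i-1),
\]
for then the hypothesis gives $n-i\geqq n-i+1$, a contradiction, and $z\in\Sigma(n,p;k)$.

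The core of the argument is this bound, which I would get by truncating at order two. Because $j^{1}h=0$ one has $th(\theta(\mathbb{R}^{i})_{0})\subset\mathfrak{m}\,\theta(h)_{0}$ and $\omega h(\mathfrak{m}\,\theta(\mathbb{R}^{q})_{0})=(h^{\ast}\mathfrak{m})\,\theta(h)_{0}\subset\mathfrak{m}^{2}\theta(h)_{0}$, while the constant target vector fields span the free part; so, writing $B:=th(\theta(\mathbb{R}^{i})_{0})+(h^{\ast}\mathfrak{m})\,\theta(h)_{0}\subset\mathfrak{m}\,\theta(h)_{0}$,
\[
\theta(h)_{0}\big/\bigl(th(\theta(\mathbb{R}^{i})_{0})+\omega h(\theta(\mathbb{R}^{q})_{0})\bigr)\ \cong\ \mathfrak{m}\,\theta(h)_{0}/B,\qquad\text{whence}\qquad \mathcal{A}_{e}\text{-}\mathrm{codim}\,h\ \geqq\ \dim_{\mathbb{R}}\ \mathfrak{m}\,\theta(h)_{0}\big/\bigl(B+\mathfrak{m}^{3}\theta(h)_{0}\bigr).
\]
Here $\dim_{\mathbb{R}}\mathfrak{m}\,\theta(h)_{0}/\mathfrak{m}^{3}\theta(h)_{0}=q\cdot\dim(\mathfrak{m}/\mathfrak{m}^{3})=\tfrac{1}{2}qi(i+3)$, and the image of $B$ in $\mathfrak{m}\,\theta(h)_{0}/\mathfrak{m}^{3}\theta(h)_{0}$ is spanned by the images of $\partial h/\partial x_{j}$, of $x_{m}(\partial h/\partial x_{j})$ ($1\leqq m,j\leqq i$) and of $h^{c}e_{b}$ ($1\leqq b,c\leqq q$, with $e_{1},\dots,e_{q}$ the standard frame of $T\mathbb{R}^{q}$) — at most $i+i^{2}+q^{2}$ elements — among which one has the Euler relation $\sum_{m}x_{m}(\partial h/\partial x_{m})\equiv2h$ (compare degree-two parts), so the span has dimension $\leqq i+i^{2}+q^{2}-1$. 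Subtracting, and using that $(p-n+i)(\tfrac12 i(i+1)-(p-n))-i^{2}=\tfrac12 qi(i+3)-q^{2}-i^{2}$ for $q=p-n+i$, yields exactly the stated bound. (If the two-jet of $h$ vanishes the generators in question are already zero and the bound holds a fortiori, so no case distinction is really needed.)

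The step I expect to be delicate is precisely this counting of constants: one needs the value $\dim(\mathfrak{m}/\mathfrak{m}^{3})=\tfrac12 i(i+3)$ and the single dimension saved by the Euler relation, since a coarser estimate only delivers the weaker hypothesis ``$\geqq n+1$'' and loses the term $-(i-1)$. One must also be disciplined about invoking Mather's description of $\Sigma(n,p;k)$ only in the range $k\geqq p+1$ where it is available, and about the (standard, but worth citing) equivalences: stability $=$ infinitesimal stability, infinitesimal stability of $f$ $\Rightarrow$ $\mathcal{A}_{e}$-versality of the genotype unfolding $\Rightarrow$ $n-i\geqq\mathcal{A}_{e}\text{-}\mathrm{codim}\,h$.
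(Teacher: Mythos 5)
The paper does not actually reproduce a proof of Proposition 4.1; it cites [An1, Corollary 5.6]. So I evaluate your argument on its own merits, and it is correct: reducing to the genotype via the splitting lemma, bounding the $\mathcal{A}_{e}$-codimension from below by truncating at order two, and contradicting the versality inequality $n-i\geqq\mathcal{A}_{e}\text{-}\mathrm{codim}\,h$ is the natural line of attack here (and given the shape of the inequality, almost certainly the argument of [An1]). The arithmetic checks out: $\dim(\mathfrak{m}/\mathfrak{m}^{3})=\tfrac12 i(i+3)$, the single dimension recovered from the Euler identity, the degenerate case $j^{2}h=0$, and the algebraic identity $(p-n+i)\bigl(\tfrac12 i(i+1)-(p-n)\bigr)-i^{2}=\tfrac12 qi(i+3)-q^{2}-i^{2}$ with $q=p-n+i$.

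Two small slips, neither of which damages the proof because you only need an inequality in one direction. First, $\omega h(\mathfrak{m}_{q}\theta(\mathbb{R}^{q})_{0})=(h^{*}\mathfrak{m}_{q})\theta(h)_{0}$ is false as an equality; only the inclusion $\omega h(\mathfrak{m}_{q}\theta(\mathbb{R}^{q})_{0})\subset(h^{*}\mathfrak{m}_{q})\theta(h)_{0}$ holds (for $h(x)=x^{2}$ in one variable, the left side consists of even germs vanishing at $0$, the right side is the whole ideal $(x^{2})$, which contains $x^{3}$). Second, and consequently, the displayed $\cong$ should be replaced by the statement that $\theta(h)_{0}/T\mathcal{A}_{e}h$ surjects onto $\theta(h)_{0}/(\mathbb{R}^{q}\oplus B)\cong\mathfrak{m}\theta(h)_{0}/B$, so $\mathcal{A}_{e}\text{-}\mathrm{codim}\,h\geqq\dim\mathfrak{m}\theta(h)_{0}/B$. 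Since you then only use a lower bound on $\mathcal{A}_{e}\text{-}\mathrm{codim}\,h$, the direction of the inequality is exactly what is needed and the conclusion stands.
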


In [I-K] the following proposition has been proved, while it has not been
stated explicitly and\ the proof has been given in the context without the
details. So we give a sketchy proof.

\begin{proposition}
[\lbrack I-K\rbrack]Let $\ell$ be a nonnegative integer and $k\geqq p+\ell+1$.
If%
\[
(p-n+i)(\frac{1}{2}i(i+1)-p+n)-i^{2}\geqq n+\ell,
\]
then we have that $\Sigma^{i}(n,p)\subset W_{\ell+1}^{k}(n,p)$. In particular,
if $n=p$ and $\frac{1}{2}i^{2}(i-1)\geqq n+\ell,$ then we have that
$\Sigma^{i}(n,n)\subset W_{\ell+1}^{k}(n,n).$
\end{proposition}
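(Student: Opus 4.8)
The plan is to deduce the statement from Proposition 4.2 together with the stability and codimension estimates for the Thom--Boardman stratum $\Sigma^i(n,p)$, working in the framework of Mather's finite determinacy theory reviewed in Section 2. First I would recall from \cite{MaV} that a jet $z \in J^k(n,p)$ with $k \geq p+1$ which is $C^\infty$-stable (i.e.\ $z \notin \Sigma(n,p;k)$) has a germ representative $f$ whose $\mathcal{K}$-codimension $d(f,\mathcal{K}) = \mathrm{codim}\,\mathcal{K}z$ is bounded by $p$, indeed the relevant invariant is controlled by the nilpotent algebra $Q(f) = A_n/(f^*\mathfrak{m}_y A_n + \ldots)$, which for a stable germ has $\dim_{\mathbb{R}} Q(f) \leq p$ in the equidimensional range and an analogous bound in general. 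The point of Proposition 4.2 of \cite{An1} is that on $\Sigma^i(n,p)$ the defect between the codimension of the stratum and the codimension of the individual $\mathcal{K}$-orbit through a generic point of that stratum is exactly the quantity
\[
\delta(i) := (p-n+i)\left(\tfrac{1}{2}i(i+1)-p+n\right) - i^2,
\]
so that a point of $\Sigma^i(n,p)$ generic in its stratum has $\mathcal{K}$-codimension (inside the fiber $J^k(n,p)$) at least $\delta(i)$, and hence under the hypothesis $\delta(i) \geq n$ the whole stratum lies in $\Sigma(n,p;k)$ once $k$ is large enough (this is precisely Proposition 4.2).

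The refinement to $W^k_{\ell+1}(n,p)$ is then a shift by $\ell$. I would argue as follows: the quantity $\Sigma(n,p;k)$ is the locus where $\mathrm{codim}\,\mathcal{K}z > p$, equivalently where the germ is $C^\infty$-unstable; more precisely, along $\Sigma^i(n,p)$ one has $\mathrm{codim}\,\mathcal{K}z \geq \delta(i) + (\text{something}) $, but the cleanest route is to observe that $z \in W^k_{\rho}(n,p)$ exactly when $\mathrm{codim}\,\mathcal{K}z \geq \rho$, by definition. The estimate underlying Proposition 4.2 actually produces the sharper inequality $\mathrm{codim}\,\mathcal{K}z \geq \delta(i)$ for every $z$ in (an open dense subset of, and then by $\mathcal{K}$-invariance and Lemma 3.1 the closed algebraic condition propagates to) $\Sigma^i(n,p)$; therefore if $\delta(i) \geq n+\ell$ then every such $z$ has $\mathrm{codim}\,\mathcal{K}z \geq n+\ell \geq \ell+1$, so $z \in W^k_{\ell+1}(n,p)$. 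The constraint $k \geq p+\ell+1$ (rather than $k \geq p+1$) enters because one needs $k$ large enough that $k$-determinacy holds on the relevant orbits: by Proposition 3.2 a jet with $\mathcal{K}$-codimension $\leq \ell$ is $\mathcal{K}$-$k$-determined once $k > \ell$, and combined with the stability threshold $p+1$ from \cite{MaV} this gives the stated bound; moreover by the remark that $(\pi^{k'}_k)^{-1}(\Sigma(n,p;k)) = \Sigma(n,p;k')$ the conclusion is independent of the particular $k$ above the threshold.

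For the special case $n=p$, I would simply substitute $n=p$ into $\delta(i)$: the first factor $p-n+i$ becomes $i$, and $\tfrac12 i(i+1) - p + n$ becomes $\tfrac12 i(i+1)$, so $\delta(i) = i \cdot \tfrac12 i(i+1) - i^2 = \tfrac12 i^2(i+1) - i^2 = \tfrac12 i^2(i+1) - i^2 = \tfrac12 i^2(i-1)$, which recovers the stated criterion $\tfrac12 i^2(i-1) \geq n+\ell$. The main obstacle, and the reason a fully detailed proof would be longer, is justifying the sharp codimension bound $\mathrm{codim}\,\mathcal{K}z \geq \delta(i)$ uniformly on $\Sigma^i(n,p)$ rather than merely at a generic point: this requires the analysis in \cite{Bo, Section 6} and \cite{An1, Section 5} of how the $\mathcal{K}$-tangent space $tf(\mathfrak{m}_x\theta(N)_x) + f^*(\mathfrak{m}_y)\theta(f)_x$ sits inside $\mathfrak{m}_x\theta(f)_x$ along the Boardman stratum, together with semicontinuity of $\mathcal{K}$-codimension under specialization (so that non-generic points only have larger codimension). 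Since \cite{I-K} carries this out in context, I would present the substitution and the $\ell$-shift in detail and cite \cite{I-K} and \cite{An1, Corollary 5.6} for the uniform codimension estimate, which is exactly the "sketchy proof" the paper promises.
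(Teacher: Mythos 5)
Your plan correctly identifies that the problem reduces to producing a lower bound on $\mathrm{codim}\,\mathcal{K}z$ for $z\in\Sigma^i(n,p)$, and your substitution checking that $\delta(i)=\tfrac12 i^2(i-1)$ when $n=p$ is fine. But the central claim you rest on --- that the proof of the earlier proposition already establishes $\mathrm{codim}\,\mathcal{K}z\ge\delta(i)$ pointwise on $\Sigma^i(n,p)$ --- is not in the paper, is not what Proposition 4.1 asserts (which concerns membership in the set $\Sigma(n,p;k)$ of non-stable jets, a different condition from a $\mathcal{K}$-codimension threshold), and is in fact suspiciously strong. If it were true, your argument would yield the conclusion already under the much weaker hypothesis $\delta(i)\ge\ell+1$, rather than $\delta(i)\ge n+\ell$; the presence of the ``$n$'' in the hypothesis is telling you that the mechanism is not a direct codimension comparison in $J^k(n,p)$.

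The idea you are missing is the versal unfolding step, which is the actual content of the proof and of the $\ell$-shift. One argues by contradiction: if $z=j_0^kf\in\Sigma^i(n,p)$ had $\mathrm{codim}\,\mathcal{K}z\le\ell$, then by Mather ([MaIV]) $f$ admits a $\mathcal{K}$-versal unfolding $F:(\mathbb{R}^{n+\ell},0)\to(\mathbb{R}^{p+\ell},0)$ with $\ell$ parameters, and versality forces $j^k_{(0,0)}F$ to be a stable jet, i.e.\ $j^k_{(0,0)}F\notin\Sigma(n+\ell,p+\ell;k)$. The unfolding adds a block of rank $\ell$ to the differential, so $j^k_{(0,0)}F$ still has kernel rank $i$, i.e.\ lies in $\Sigma^i(n+\ell,p+\ell)$. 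Now apply Proposition 4.1 in dimensions $(n+\ell,p+\ell)$: since $p'-n'=p-n$ for $n'=n+\ell$, $p'=p+\ell$, the quantity $\delta(i)$ is unchanged, while the right-hand side becomes $n'=n+\ell$; the hypothesis $\delta(i)\ge n+\ell$ is therefore exactly what is needed to conclude $\Sigma^i(n+\ell,p+\ell)\subset\Sigma(n+\ell,p+\ell;k)$, contradicting stability of $F$. This also explains the threshold $k\ge p+\ell+1=(p+\ell)+1$: it is the $k\ge p'+1$ required by Proposition 4.1 in the unfolded dimensions, not (as you suggested) a $\mathcal{K}$-determinacy bound. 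What the unfolding argument actually yields is the weaker estimate $\mathrm{codim}\,\mathcal{K}z\ge\delta(i)-n+1$, which is exactly sharp enough for the stated inequality but does not support the stronger pointwise bound your proposal invokes. Without the unfolding step, your proof has a genuine gap: the inequality it needs is neither proved nor correctly attributed.
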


\begin{proof}
Take a jet $z$ in $\Sigma^{i}(n,p)$ such that $z=j_{0}^{k}f$. Suppose that
$z\notin W_{\ell+1}^{k}$, and hence codim$\mathcal{K}z\leqq\ell$. By [MaIV]
there exists a versal unfolding $F:(\mathbb{R}^{n}\times\mathbb{R}^{\ell
},0)\rightarrow(\mathbb{R}^{p}\times\mathbb{R}^{\ell},0)$ of $f$ and
$j_{(0,0)}^{k}F\notin\Sigma(n+\ell,p+\ell;k)$. Here, we note that
$j_{(0,0)}^{k}F$ is of kernel rank $i$. By the assumption and Proposition 4.1
we have%
\[
\Sigma^{i}(n+\ell,p+\ell)\subset\Sigma(n+\ell,p+\ell;k).
\]
This implies $j_{(0,0)}^{k}F\in\Sigma(n+\ell,p+\ell;k)$. This is a
contradiction. Hence, $z$ lies in $W_{\ell+1}^{k}$.
\end{proof}

We show the following proposition by applying Proposition 4.2.

\begin{proposition}
Let $\ell$ be a nonnegative integer and $k\geqq p+\ell+1$. If $\Sigma
^{i}(n,p)\subset W_{\ell+1}^{k}(n,p)$, then we have that for any positive
integer $m$, $\Sigma^{i}(m+n,m+p)\subset W_{\ell+1}^{k}(m+n,m+p)$.
\end{proposition}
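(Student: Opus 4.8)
The plan is to reduce the statement to Proposition 4.2, exactly as Proposition 4.3 was proved, by exhibiting an appropriate lower bound on $\mathrm{codim}\,\mathcal{K}z$ for $z\in\Sigma^{i}(m+n,m+p)$ that comes from the hypothesis $\Sigma^{i}(n,p)\subset W_{\ell+1}^{k}(n,p)$. First I would observe that the hypothesis, together with Proposition 4.2 read in the contrapositive direction, tells us something about the numerical quantity $\phi(n,p,i):=(p-n+i)(\tfrac12 i(i+1)-p+n)-i^{2}$. More precisely, if $\Sigma^{i}(n,p)\subset W_{\ell+1}^{k}(n,p)$ holds, one checks that the relevant invariant governing $\mathcal{K}$-codimension of a kernel-rank-$i$ jet is controlled by $\phi(n,p,i)$; the key elementary point is that $\phi$ is \emph{invariant under the stabilization} $(n,p)\mapsto(m+n,m+p)$, since $p-n+i$ and $\tfrac12 i(i+1)-p+n$ both depend on $n,p$ only through $p-n$, which is unchanged by adding $m$ to both. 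Thus $\phi(m+n,m+p,i)=\phi(n,p,i)$.

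The second step is to make the link between $\Sigma^{i}(n,p)\subset W_{\ell+1}^{k}(n,p)$ and the inequality $\phi(n,p,i)\geqq n+\ell$ precise. Proposition 4.2 gives the implication in one direction. For the converse direction I would argue as in the proof of Proposition 4.2: take $z=j_{0}^{k}f\in\Sigma^{i}(n,p)$ with $\mathrm{codim}\,\mathcal{K}z\leqq\ell$, form a $\mathcal{K}$-versal unfolding $F:(\mathbb{R}^{n}\times\mathbb{R}^{\ell},0)\rightarrow(\mathbb{R}^{p}\times\mathbb{R}^{\ell},0)$ of $f$ with $j_{(0,0)}^{k}F\notin\Sigma(n+\ell,p+\ell;k)$ and kernel rank $i$, so that $\Sigma^{i}(n+\ell,p+\ell)\not\subset\Sigma(n+\ell,p+\ell;k)$; by Proposition 4.1 this forces $\phi(n+\ell,p+\ell,i)<n+\ell$, i.e. $\phi(n,p,i)<n+\ell$ using stabilization invariance again (here adding $\ell$ to both $n$ and $p$). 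Hence $\Sigma^{i}(n,p)\subset W_{\ell+1}^{k}(n,p)$ implies $\phi(n,p,i)\geqq n+\ell$. (One must be slightly careful: this equivalence presumes $\Sigma^{i}(n,p)$ and $\Sigma^{i}(n+\ell,p+\ell)$ are nonempty and that the kernel-rank of a versal unfolding of a corank-$i$ germ is again $i$, which is already used in the proof of Proposition 4.2.)

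Putting the pieces together: from $\Sigma^{i}(n,p)\subset W_{\ell+1}^{k}(n,p)$ we obtain $\phi(n,p,i)\geqq n+\ell$; by stabilization invariance $\phi(m+n,m+p,i)=\phi(n,p,i)\geqq n+\ell$, and since $n+\ell\leqq (m+n)+\ell$ this gives $\phi(m+n,m+p,i)\geqq (m+n)+\ell$; applying Proposition 4.2 with $(n,p)$ replaced by $(m+n,m+p)$ (and noting $k\geqq p+\ell+1$ implies $k\geqq (m+p)+\ell+1$ only if we also increase $k$ — so in fact the clean statement needs $k\geqq m+p+\ell+1$, or one appeals to $(\pi_{k}^{k'})^{-1}$-stability to pass to large $k$ and then back down) yields $\Sigma^{i}(m+n,m+p)\subset W_{\ell+1}^{k}(m+n,m+p)$.

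The main obstacle I anticipate is the converse half of the numerical equivalence in the second step: Proposition 4.2 as stated only gives one implication, and to run the argument one genuinely needs that $\Sigma^{i}(n,p)\subset W_{\ell+1}^{k}(n,p)$ cannot happen for "accidental" reasons unrelated to the numerical bound. The honest route is not to prove a clean iff but simply to transport the \emph{versal unfolding witness} directly: if $z\in\Sigma^{i}(m+n,m+p)$ had $\mathrm{codim}\,\mathcal{K}z\leqq\ell$, its versal unfolding would exhibit $j^{k}F\notin\Sigma(m+n+\ell,m+p+\ell;k)$ of kernel rank $i$, hence $\Sigma^{i}(m+n+\ell,m+p+\ell)\not\subset\Sigma(\cdot;k)$; but $\Sigma^{i}(n+\ell,p+\ell)\subset\Sigma(n+\ell,p+\ell;k)$ (which follows from the hypothesis on $\Sigma^{i}(n,p)$ via Proposition 4.1 and the numerical implication, since the Boardman–Mather numerical condition in Proposition 4.1 depends on $n,p$ only through $p-n$ and is therefore the same for $(n+\ell,p+\ell)$ and $(m+n+\ell,m+p+\ell)$) — contradiction. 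This avoids asserting a full equivalence and keeps every step within the tools already available in the excerpt.
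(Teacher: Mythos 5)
Your reduction to the numerical criterion of Propositions~4.1 and~4.2 cannot work, and the argument as written contains a sign error that hides this. The middle step reads: ``$\phi(m+n,m+p,i)=\phi(n,p,i)\geqq n+\ell$, and since $n+\ell\leqq(m+n)+\ell$ this gives $\phi(m+n,m+p,i)\geqq(m+n)+\ell$.'' But from $a\geqq b$ and $b\leqq c$ one cannot conclude $a\geqq c$; you would need $n+\ell\geqq(m+n)+\ell$, which is false for $m>0$. The underlying obstruction is structural: the quantity $\phi(n,p,i)=(p-n+i)(\tfrac12 i(i+1)-p+n)-i^{2}$ depends on $(n,p)$ only through $p-n$, whereas the threshold in Proposition~4.2 is $n+\ell$, which strictly increases under $(n,p)\mapsto(m+n,m+p)$. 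So the sufficient condition of Proposition~4.2 for $(m+n,m+p)$, namely $\phi\geqq m+n+\ell$, is strictly stronger than the one for $(n,p)$; even if you could extract $\phi(n,p,i)\geqq n+\ell$ from the hypothesis, it would not yield what is needed. The ``honest route'' at the end inherits the same problem: to reach a contradiction you would need $\Sigma^{i}(m+n+\ell,m+p+\ell)\subset\Sigma(m+n+\ell,m+p+\ell;k)$, but via Proposition~4.1 that requires $\phi\geqq m+n+\ell$, not the $\phi\geqq n+\ell$ you are trying to establish. Moreover, the converse of Proposition~4.2 that your argument tacitly invokes cannot hold in general: if $\Sigma^{i}(n,p)\subset W_{\ell+1}^{k}(n,p)$ forced $\phi(n,p,i)\geqq n+\ell$, then combining with Proposition~4.3 itself and stability invariance of $\phi$ would force $\phi(n,p,i)\geqq m+n+\ell$ for every $m$, which is absurd. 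So the numerical bound is genuinely only a one-way sufficient condition.

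The paper's proof takes a completely different and direct route. Given $z=j_{0}^{k}f\in\Sigma^{i}(m+n,m+p)$ with $\alpha=j_{0}^{1}f$, it splits source and target as $L\oplus L^{\perp}\oplus\mathrm{Ker}(\alpha)$ and $M\oplus M^{\perp}\oplus\mathrm{Im}(\alpha)^{\perp}$ with $\dim L=\dim M=m$ and $\alpha|_{L}:L\to M$ an isomorphism, chooses adapted coordinates in which $y_{j}\circ f=x_{j}$ for $1\leqq j\leqq m$, and defines a corank-$i$ germ $g:(\mathbb{R}^{n},0)\to(\mathbb{R}^{p},0)$ by restricting $f$ to $\{x_{1}=\dots=x_{m}=0\}$ and projecting off the first $m$ target coordinates. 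It then shows by an explicit calculation in $\theta(f)$ and $\theta(g)$ (using the relations $tf(\partial/\partial x_{j})=(\partial/\partial y_{j})\circ f+\dots$ for $1\leqq j\leqq m$ and the fact that $y_{t}\circ f-y_{t}\circ g\in(x_{1},\dots,x_{m})$) that $d(f,\mathcal{K})=d(g,\mathcal{K})$. Since $j_{0}^{k}g\in\Sigma^{i}(n,p)\subset W_{\ell+1}^{k}(n,p)$ by hypothesis, $d(g,\mathcal{K})\geqq\ell+1$, hence $d(f,\mathcal{K})\geqq\ell+1$, i.e.\ $z\in W_{\ell+1}^{k}(m+n,m+p)$. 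In short: the paper proves invariance of $\mathcal{K}$-codimension under this ``desuspension'' of corank-$i$ jets, rather than appealing to any numerical estimate. If you want to salvage your approach, the correct move is not to route through Propositions~4.1--4.2 at all but to prove this desuspension invariance directly.
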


\begin{proof}
Let $z=j_{0}^{k}f\in\Sigma^{i}(m+n,m+p)$. Setting $\alpha=j_{0}^{1}f$, we
identify $\alpha$ with the homomorphism $\mathbb{R}^{m+n}\rightarrow
\mathbb{R}^{m+p}$.\ Let $\mathrm{Ker}(\alpha)^{\perp}$ and
$\mathrm{\operatorname{Im}}(\alpha)^{\perp}$ be the orthogonal complement of
the kernel $\mathrm{Ker}(\alpha)$ and the image $\mathrm{\operatorname{Im}%
}(\alpha)$ of $\alpha$ respectively. Let $L$ and $M$ be subspaces of
$\mathrm{Ker}(\alpha)^{\perp}$ and $\mathrm{\operatorname{Im}}(\alpha)$ of
dimension $m$ such that $\alpha$ maps $L$ onto $M$ isomorphically. Let
$L^{\perp}$ and $M^{\perp}$ be their orthogonal complements in $\mathrm{Ker}%
(\alpha)^{\perp}$ and $\mathrm{\operatorname{Im}}(\alpha)$ respectively. Then
$\alpha$\ is decomposed as in the following exact sequence.%
\[
0\rightarrow\mathrm{Ker}(\alpha)\rightarrow L\oplus L^{\perp}\oplus
\mathrm{Ker}(\alpha)\overset{\alpha}{\rightarrow}M\oplus M^{\perp}%
\oplus\mathrm{\operatorname{Im}}(\alpha)^{\perp}\rightarrow
\mathrm{\operatorname{Im}}(\alpha)^{\perp}\rightarrow0
\]
Let us choose coordinates%
\[
(u_{1},\cdots,u_{m})\text{, }(u_{m+1},\cdots,u_{m+n-i})\text{ and
}(u_{m+n-i+1},\cdots,u_{m+n})
\]
of $L$, $L^{\perp}$ and $\mathrm{Ker}(\alpha)$, and coordinates%
\[
(y_{1},\cdots,y_{m})\text{, }(y_{m+1},\cdots,y_{m+n-i})\text{ and
}(y_{m+n-i+1},\cdots,y_{m+p})\
\]
of $M$, $M^{\perp}$ and $\mathrm{\operatorname{Im}}(\alpha)^{\perp}%
$\ respectively. Since $\alpha$ maps $L$ onto $M$ isomorphically, there exist
the new coordinates $(x_{1},\cdots,x_{m+n})$\ of $\mathbb{R}^{m+n}$\ such that%
\[
x_{j}=x_{j}(u_{1},\cdots,u_{m+n})\text{ }(1\leq j\leq m)\text{ \ and }%
x_{j}=u_{j}\text{ }(m+1\leq j\leq m+n)
\]
and that%
\begin{equation}
y_{j}\circ f(x_{1},\cdots,x_{m+n})=x_{j}\text{ }(1\leq j\leq m).
\end{equation}

Setting $\overset{\bullet}{x}=(x_{m+1},\cdots,x_{m+n})$, we define the map
$g:(\mathbb{R}^{n},0)\rightarrow(\mathbb{R}^{p},0)$ by%
\[
y_{j}\circ g(\overset{\bullet}{x})=y_{j}\circ f(0,\cdots,0,\overset{\bullet
}{x})\text{ \ \ }(m+1\leq j\leq m+p).
\]
Then $f$ is an unfolding of $g$\ by (4.1) and $g$ is of kernel rank $i$ at the
origin.\ We next prove by following the argument and the notation used in
[MaIV, Section 1] that $d(g,\mathcal{K})$\ is equal to $d(f,\mathcal{K})$.
Define $\pi:\theta(f)\rightarrow\theta(g)$\ by%
\[
\pi\left(
{\displaystyle\sum\limits_{j=1}^{m}}
a_{j}tf(\frac{\partial}{\partial xj})+\sum\limits_{j=m+1}^{m+p}a_{j}%
(\frac{\partial}{\partial yj}\circ f)\right)  =%
{\displaystyle\sum\limits_{j=m+1}^{m+p}}
a_{j}^{\prime}(\frac{\partial}{\partial yj}\circ g),
\]
where $a_{j}\in C^{\infty}(\mathbb{R}^{m+n},0)$, $a_{j}^{\prime}\in C^{\infty
}(\mathbb{R}^{n},0)$\ and $a_{j}^{\prime}(\overset{\bullet}{x})=a_{j}%
(0,\cdots,0,\overset{\bullet}{x})$. We note that%
\[%
\begin{array}
[c]{l}%
tf(\partial/\partial x_{j})=(\partial/\partial y_{j})\circ f+\sum
_{t=m+1}^{m+p}(\partial y_{t}\circ f/\partial x_{j})(\partial/\partial
y_{t})\circ f\text{ \ }(1\leq j\leq m),\\
tf(\partial/\partial x_{j})=\sum_{t=m+1}^{m+p}(\partial y_{t}\circ f/\partial
x_{j})(\partial/\partial y_{t})\circ f\text{ \ }(m+1\leq j\leq m+n),\\
(\partial y_{t}\circ f/\partial x_{j})(0,\cdots,0,\overset{\bullet}%
{x})=(\partial y_{t}\circ g/\partial x_{j})(\overset{\bullet}{x})\text{
\ }(m+1\leq t\leq m+p).
\end{array}
\]
Since%
\[
y_{t}\circ f(x_{1},\cdots,x_{m+n})-y_{t}\circ f(0,\cdots,0,\overset{\bullet
}{x})=\sum_{u=1}^{m}x_{u}b_{u}(x_{1},\cdots,x_{m+n}),
\]
for some $b_{j}\in C^{\infty}(\mathbb{R}^{m+n},0)$, we have%
\[
\partial y_{t}\circ f/\partial x_{j}-\partial y_{t}\circ g/\partial x_{j}%
=\sum_{u=1}^{m}x_{u}(\partial b_{u}/\partial x_{j})\text{ \ \ }(m+1\leq j\leq
m+n).
\]
Hence, the assertion follows from an elementary calculation under the
definition in (2.3).

Since $j_{0}^{k}g\in\Sigma^{i}(n,p)\subset W_{\ell+1}^{k}(n,p)$, we have
$d(g,\mathcal{K})\geqq\ell+1$. Hence, we have $d(f,\mathcal{K})\geqq\ell+1$.
This shows $z\in W_{\ell+1}^{k}(m+n,m+p)$. This is what we want.
\end{proof}

Let $\xi$\ be a stable vector bundle over a space. Let $\mathbf{c}(\Sigma
^{i},\xi)$ denote the determinant of the $(p-n+i)$-matrix whose $(s,t)$%
-component is the $(i+s-t)$-th Stiefel-Whitney class $W_{i+s-t}(\xi)$. If
$n-p$ and $i$ are even, say $n-p=2u$ and $i=2v$, and if $\xi$\ is orientable,
then $\mathbf{c}_{\mathbb{Z}}(\Sigma^{i},\xi)$ expresses the determinant of
the $(v-u)$-matrix whose $(s,t)$-component is the $(v+s-t)$-th Pontrjagin
class $P_{v+s-t}(\xi)$.%
\[
\left\vert
\begin{array}
[c]{ccc}%
W_{i} & \cdots & W_{n-p+1}\\
\vdots & \ddots & \vdots\\
W_{p-n+2i-1} & \cdots & W_{i}%
\end{array}
\right\vert \text{ \ \ and \ \ }\left\vert
\begin{array}
[c]{ccc}%
P_{v} & \cdots & P_{u+1}\\
\vdots & \ddots & \vdots\\
P_{2v-u-1} & \cdots & P_{v}%
\end{array}
\right\vert
\]
Let $\tau_{X}$ denote the stable tangent bundle of a manifold $X$. If
$f:N\rightarrow P$ is a smooth map transverse to $\Sigma^{i}(N,P)$\ and
$\xi=\tau_{N}-f^{\ast}(\tau_{P})$, then $\mathbf{c}(\Sigma^{i},\xi)$ (resp.
$\mathbf{c}_{\mathbb{Z}}(\Sigma^{i},\xi)$) is equal to the (resp. integer)
Thom polynomial of the topological closure of $(j^{k}f)^{-1}(\Sigma^{i}(N,P))$
([Po], [Ro] and see also [An1, Proposition 5.4]). If it does not vanish, then
$(j^{k}f)^{-1}(\Sigma^{i}(N,P))$ cannot be empty by the obstruction theory in
[St]. Hence, we have the following corollary of Propositions 4.2 and 4.3.

\begin{corollary}
Let $f:M\rightarrow Q$\ be a smooth map with $\dim M=m+n$ and $\dim Q=m+p$.
Under the same assumption of Proposition 4.2. we assume that either

(i) $\mathbf{c}(\Sigma^{i},\tau_{M}-f^{\ast}(\tau_{Q}))$ does not vanish, or

(ii) $M$ and $\tau_{M}-f^{\ast}(\tau_{Q})$ are orientable, $n-p$ and $i$ are
even and$\ \mathbf{c}_{\mathbb{Z}}(\Sigma^{i},\tau_{M}-f^{\ast}(\tau_{Q}))$
does not vanish.

\noindent Then $f$ is not homotopic to any $\mathcal{O}_{\ell}^{k}$-regular map.
\end{corollary}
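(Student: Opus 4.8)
The plan is to argue by contradiction: suppose $f$ is homotopic to an $\mathcal{O}_{\ell}^{k}$-regular map $g:M\to Q$, and derive a contradiction with (i) or (ii). The first step is to locate $\Sigma^{i}$ inside the bad set $W_{\ell+1}^{k}$ in the relevant dimensions. Under the assumption of Proposition 4.2 --- namely $k\geqq p+\ell+1$ together with $(p-n+i)(\tfrac{1}{2}i(i+1)-p+n)-i^{2}\geqq n+\ell$ --- that proposition yields $\Sigma^{i}(n,p)\subset W_{\ell+1}^{k}(n,p)$, and Proposition 4.3 upgrades this to $\Sigma^{i}(m+n,m+p)\subset W_{\ell+1}^{k}(m+n,m+p)$; passing to the associated subbundles of $J^{k}(M,Q)$ gives $\Sigma^{i}(M,Q)\subset W_{\ell+1}^{k}(M,Q)$. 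By the very definition of an $\mathcal{O}_{\ell}^{k}$-regular map, $j^{k}g(M)\subset\mathcal{O}_{\ell}^{k}(M,Q)\subset J^{k}(M,Q)\setminus W_{\ell+1}^{k}(M,Q)$, so $j^{k}g(M)$ is disjoint from $W_{\ell+1}^{k}(M,Q)$, hence from $\Sigma^{i}(M,Q)$; thus $(j^{k}g)^{-1}(\Sigma^{i}(M,Q))=\varnothing$. (Only the definition of $\mathcal{O}_{\ell}^{k}$-regularity enters here, not the homotopy principle of Theorem 1.2.)

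The second step invokes the Thom polynomial obstruction recalled just before the statement. Since $g\simeq f$ we have $g^{\ast}(\tau_{Q})\cong f^{\ast}(\tau_{Q})$ as stable bundles, so $\tau_{M}-g^{\ast}(\tau_{Q})=\tau_{M}-f^{\ast}(\tau_{Q})=:\xi$ as stable bundles over $M$; consequently $\mathbf{c}(\Sigma^{i},\tau_{M}-g^{\ast}(\tau_{Q}))=\mathbf{c}(\Sigma^{i},\xi)$ in case (i) and $\mathbf{c}_{\mathbb{Z}}(\Sigma^{i},\tau_{M}-g^{\ast}(\tau_{Q}))=\mathbf{c}_{\mathbb{Z}}(\Sigma^{i},\xi)$ in case (ii), and both are nonzero by hypothesis. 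Now $g$ is (vacuously) transverse to $\Sigma^{i}(M,Q)$, its jet image missing $\Sigma^{i}(M,Q)$ altogether, so by [Po], [Ro] (see also [An1, Proposition 5.4]) the class $\mathbf{c}(\Sigma^{i},\tau_{M}-g^{\ast}(\tau_{Q}))$ (resp.\ $\mathbf{c}_{\mathbb{Z}}(\Sigma^{i},\tau_{M}-g^{\ast}(\tau_{Q}))$) is the mod $2$ (resp.\ integral) Poincar\'e dual of the topological closure of $(j^{k}g)^{-1}(\Sigma^{i}(M,Q))$, which is empty. Hence this class vanishes, contradicting the non-vanishing just recorded. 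Therefore no $\mathcal{O}_{\ell}^{k}$-regular map is homotopic to $f$.

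For bookkeeping one should note that $W_{\ell+1}^{k}(M,Q)$ is closed, being the subbundle associated to the algebraic set $W_{\ell+1}^{k}(n,p)$ of Lemma 3.1, so it in fact contains $\overline{\Sigma^{i}(M,Q)}$; this renders the passage to "topological closure" in the Thom polynomial statement harmless. The only point that genuinely requires care is that the obstruction class $\mathbf{c}(\Sigma^{i},\xi)$ be a homotopy invariant of $f$, so that it transfers to the hypothetical $\mathcal{O}_{\ell}^{k}$-regular representative $g$; this is exactly why it is formulated through the stable bundle $\tau_{M}-f^{\ast}(\tau_{Q})$ rather than through $f$ itself. Once that is in place, the two cases run identically, the orientability of $M$ and $\tau_{M}-f^{\ast}(\tau_{Q})$ and the evenness of $n-p$ and $i$ in (ii) being precisely the hypotheses under which $\mathbf{c}_{\mathbb{Z}}(\Sigma^{i},\xi)$ and the integral Thom polynomial are defined; and the whole argument is then an assembly of Propositions 4.2 and 4.3 with the quoted results.
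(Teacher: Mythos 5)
Your argument is correct and coincides with the paper's: the paper's ``proof'' of this corollary is the paragraph immediately preceding it, which combines Propositions 4.2 and 4.3 (to place $\Sigma^{i}$ inside $W_{\ell+1}^{k}$) with the Thom--polynomial obstruction recalled from [Po], [Ro], [An1] and the homotopy invariance of $\tau_{M}-f^{\ast}(\tau_{Q})$, exactly as you do. Your added remarks --- that only the easy direction (and not the homotopy principle of Theorem 1.2) is needed, and that $W_{\ell+1}^{k}$ being closed handles the passage to the topological closure of $\Sigma^{i}$ --- are correct clarifications of points the paper leaves implicit.
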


\section{Homotopy equivalences}

In this section we will study the filtration in (1.1) in Introduction by
applying Corollaries 3.7 and 4.4 and Remark 3.8.

Let us first review what is called the Sullivan's exact sequence in the
surgery theory following [M-M] (see also [K-M], [Su] and [Br]).

In what follows $P$\ is a closed and oriented $n$-manifold. We define the set
$\mathcal{S}(P)$\ to be the set of all equivalence classes of homotopy
equivalences $f:N\rightarrow P$ of degree $1$ under the following equivalence
relation. Let\ $N_{j}$\ be closed oriented $n$-manifolds and let $f_{j}%
:N_{j}\rightarrow P$ be homotopy equivalences of degree $1$\ ($j=1,2$). We say
that $f_{1}$\ and $f_{2}$\ are equivalent if there exists an $h$-cobordism
$W$\ of $N_{1}$\ and $N_{2}$\ and a homotopy equivalence $F:(W,N_{1}%
\cup(-N_{2}))\rightarrow(P\times\lbrack0,1],P\times0\cup(-P)\times1)$\ of
degree $1$\ such that $F|N_{j}=f_{j}$\ ($j=1,2$).

Let $O(k)$ denote the rotation group of $\mathbb{R}^{k}$\ and let $G_{k}%
$\ denote the space of all homotopy equivalence of the $(k-1)$-sphere
$S^{k-1}$\ equipped with the compact-open topology. By considering the
canonical inclusions $O(k)\rightarrow O(k+1)$\ and $G_{k}\rightarrow G_{k+1}$,
we set $O=\lim_{k\rightarrow\infty}O(k)$\ and $G=\lim_{k\rightarrow\infty
}G_{k}$. Let $BO$\ and $BG$\ denote the classifying spaces for $O$ and $G$.
Then we have the canonical maps $\pi(m):BO(m)\rightarrow BG(m)$ and
$\pi:BO\rightarrow BG$, which are regarded as fibrations with fibers
$G(m)/O(m)$\ and $G/O$ respectively. For a sufficiently large number $m$, let
$\eta_{O(m)}$ denote the universal vector bundle over $BO(m)$ and let
$i_{G/O}:G(m)/O(m)\rightarrow BO(m)$ be the inclusion of a fiber. Set
$\eta_{G/O}=(i_{G/O})^{\ast}\eta_{O(m)}$. Then $\eta_{G/O}$ has a
trivialization $t_{G/O}:\eta_{G/O}\rightarrow\mathbb{R}^{m}$ as a spherical fibration.

We next recall the surgery obstruction $\mathfrak{s}_{4q}^{P}%
:[P,G/O]\rightarrow\mathbb{Z}$ only in the case of $n=4q$. For $[\alpha
]\in\lbrack P,G/O]$ let $\eta=\alpha^{\ast}(\eta_{G/O})$ with the canonical
bundle map $\overline{\alpha}:\eta\rightarrow\eta_{G/O}$ covering $\alpha$ and
the projection $\pi_{\eta}$ onto $P$. We deform $t_{G/O}\circ\overline{\alpha
}$ to a map transverse to $0\in\mathbb{R}^{m}$ and let $M$ be the inverse
image of $0$ with a map $\pi_{\eta}|M:M\rightarrow P$ of degree $1$. We define
$\mathfrak{s}_{4q}^{P}([\alpha])=(1/8)(\sigma(M)-\sigma(P))$. If $P$ is simply
connected in addition, then there have been defined an injection
$j^{P}:\mathcal{S}(P)\rightarrow\lbrack P,G/O]$ such that if $\mathfrak{s}%
_{4q}^{P}([\alpha])=0$, $\pi_{\eta}|M$ is deformed to a homotopy equivalence
$f:N\rightarrow P$ of degree $1$\ under a certain cobordism. The following is
the Sullivan's exact sequence.%
\[
0\longrightarrow\mathcal{S}(P)\overset{j^{P}}{\longrightarrow}[P,G/O]\overset
{\mathfrak{s}_{4q}^{P}}{\longrightarrow}\mathbb{Z}%
\]

Let us recall the \textit{cobordism group} $\Omega_{n}^{h-eq}$ of homotopy
equivalences of degree $1$ in [An5]. Let $N_{j}$ and $P_{j}$\ be oriented
closed $n$-manifolds and let $f_{j}:N_{j}\rightarrow P_{j}$\ be homotopy
equivalences\ of degree $1$\ ($j=1,2$). We say that $f_{1}$ and $f_{2}$\ are
cobordant if there exists an oriented $(n+1)$-manifold $W$, $V$ and a
homotopy\ equivalence $F:(W,\partial W)\rightarrow(V,\partial V)$ of degree
$1$ such that $\partial W=N_{1}\cup(-N_{2})$, $\partial V=P_{1}\cup(-P_{2}%
)$\ and $F|N_{j}=f_{j}$. The cobordism class of $f:N\rightarrow P$ is denoted
by $[f:N\rightarrow P]$.\ Let $\Omega_{n}^{h-eq}$\ denote the set which
consists of all cobordism classes of homotopy equivalences of degree $1$. We
provide $\Omega_{n}^{h-eq}$ with a module structure by setting

$\bullet$ $[f_{1}:N_{1}\rightarrow P_{1}]+[f_{2}:N_{2}\rightarrow
P_{2}]=[f_{1}\cup f_{2}:N_{1}\cup N_{2}\rightarrow P_{1}\cup P_{2}]$,

$\bullet$ $-[f:N\rightarrow P]=[f:(-N)\rightarrow(-P)].$

\noindent The null element is defined to be $[f:N\rightarrow P]$ which bound a
homotopy\ equivalence $F:(W,\partial W)\rightarrow(V,\partial V)$ of degree
$1$ such that $\partial W=N$, $\partial V=P$\ and $F|N=f$. Even if $P$ is not
simply connected, we can find $f_{1}:N_{1}\rightarrow P_{1}$ with $P_{1}$
being simply connected in the same cobordism class by killing $\pi
_{1}(N)\approx\pi_{1}(P)$ by usual surgery.

Let $\mathbf{c}_{\mathbb{Q}}(\Sigma^{2i},\eta_{G/O})$ denote the image of
$\mathbf{c}_{\mathbb{Z}}(\Sigma^{2i},\eta_{G/O})$\ in $H^{4i^{2}%
}(G/O;\mathbb{Q})$. Let $\alpha=j^{P}([f:N\rightarrow P])$ and let
$c_{P}:P\rightarrow BSO$ be a classifying map of the tangent bundle $TP$\ of
$P$. Then it induces the homomorphism $\mathcal{C}_{2i}:{\Omega}_{4q}%
^{h-eq}\rightarrow H_{4q-4i^{2}}(G/O;\mathbb{Q})$\ defined by%
\begin{align*}
\mathcal{C}_{2i}([f  &  :N\rightarrow P])=\mathbf{c}_{\mathbb{Q}}(\Sigma
^{2i},\eta_{G/O})\cap\alpha([P])\\
&  =\mathbf{c}_{\mathbb{Q}}(\Sigma^{2i},\eta_{G/O})\otimes1\cap(\alpha\times
c_{P})_{\ast}([P]),
\end{align*}
under the identification%
\[
H_{4q-4i^{2}}(G/O;\mathbb{Q})=H_{4q-4i^{2}}(G/O;\mathbb{Q})\otimes1
\]
in $\Sigma_{j=0}^{q-i^{2}}H_{4j}(G/O;\mathbb{Q})\otimes H_{4q-4i^{2}%
-4j}(BSO;\mathbb{Q})$. We have that%
\begin{align*}
\mathcal{C}_{2i}(\alpha)  &  =\mathbf{c}_{\mathbb{Q}}(\Sigma^{2i},\eta
_{G/O})\cap(\alpha)_{\ast}([P])\\
&  =\mathbf{c}_{\mathbb{Q}}(\Sigma^{2i},\eta_{G/O})\cap(\alpha\circ f)_{\ast
}([N])\\
&  =(\alpha\circ f)_{\ast}((\alpha\circ f)^{\ast}(\mathbf{c}_{\mathbb{Q}%
}(\Sigma^{2i},\eta_{G/O}))\cap\lbrack N])\\
&  =(\alpha\circ f)_{\ast}(\mathbf{c}_{\mathbb{Z}}\mathbf{(}\Sigma^{2i}%
,\tau_{N}-f^{\ast}(\tau_{P}))\cap\lbrack N]).
\end{align*}

Furthermore, we have proved in [An5, Theorems 3.2 and 4.1] that for integers
$q$ and $i$ with $q\geqq i^{2}\geqq1$,%
\begin{equation}
\dim\Omega_{4q}^{h-eq}/(\Omega_{4q}^{h-eq}\cap\mathrm{Ker}(\mathcal{C}%
_{2i}))\otimes\mathbb{Q}=\dim H_{4q-4i^{2}}(BSO;\mathbb{Q}).
\end{equation}

The following theorem follows from (5.1), Proposition 4.2 and Corollary 4.4.

\begin{theorem}
Let $\ell$, $q$ and $i$ be integers with $\ell\geqq0$\ and $q\geqq i^{2}$. Let
$k\geqq4q+\ell+1$.\ There exists a cobordism class $[f:N\rightarrow
P]\in\Omega_{4q}^{h-eq}$ such that $\mathbf{c}_{\mathbb{Z}}(\Sigma^{2i}%
,\tau_{N}-f^{\ast}(\tau_{P}))$ is not a torsion element and that if
$4i^{3}-2i^{2}\geqq4q+\ell\geqq4i^{2}+\ell$, then $f$\ is not cobordant in
$\Omega_{4q}^{h-eq}$ to any $\mathcal{O}_{\ell}^{k}$-regular map.
\end{theorem}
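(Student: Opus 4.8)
The plan is to feed the algebraic input (5.1) about $\Omega_{4q}^{h-eq}$ into the singularity obstruction of Corollary 4.4; the only real work is to line up indices so that the Thom polynomial in play is the one attached to the corank-$2i$ stratum $\Sigma^{2i}$, and to keep track of cobordism invariance. Throughout we may assume $i\geqq 1$, as in (5.1).

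First I would produce the cobordism class. Since $q\geqq i^{2}\geqq 1$, we have $4q-4i^{2}=4(q-i^{2})\geqq 0$, and $H^{\ast}(BSO;\mathbb{Q})$ is the polynomial algebra on the Pontrjagin classes in degrees $4,8,12,\dots$, so $H_{4q-4i^{2}}(BSO;\mathbb{Q})\neq 0$. By (5.1) the quotient $\Omega_{4q}^{h-eq}/\mathrm{Ker}(\mathcal{C}_{2i})$, which injects into the rational vector space $H_{4q-4i^{2}}(G/O;\mathbb{Q})$ and is therefore torsion free, is nonzero; hence there is a class $[f:N\rightarrow P]\in\Omega_{4q}^{h-eq}$ with $\mathcal{C}_{2i}([f:N\rightarrow P])\neq 0$. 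Replacing the representative by one with $P$ simply connected (surgery on $\pi_{1}$, as in the definition of $\Omega_{4q}^{h-eq}$), the computation preceding (5.1) gives
\[
\mathcal{C}_{2i}([f:N\rightarrow P])=(\alpha\circ f)_{\ast}\bigl(\mathbf{c}_{\mathbb{Z}}(\Sigma^{2i},\tau_{N}-f^{\ast}(\tau_{P}))\cap\lbrack N]\bigr),\qquad\alpha=j^{P}([f:N\rightarrow P]).
\]
If $\mathbf{c}_{\mathbb{Z}}(\Sigma^{2i},\tau_{N}-f^{\ast}(\tau_{P}))$ were a torsion class, the right-hand side would be torsion in $H_{4q-4i^{2}}(G/O;\mathbb{Z})$, hence $0$ rationally, a contradiction. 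Thus $\mathbf{c}_{\mathbb{Z}}(\Sigma^{2i},\tau_{N}-f^{\ast}(\tau_{P}))$ is not a torsion element, which is the first assertion. Here $n=p=4q$ and $2i$ are even and $\tau_{N}-f^{\ast}\tau_{P}$ is orientable ($N$ and $P$ being oriented), so $\mathbf{c}_{\mathbb{Z}}(\Sigma^{2i},-)$ is genuinely the integral Thom polynomial of $\Sigma^{2i}$.

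Next, assuming $4i^{3}-2i^{2}\geqq 4q+\ell\geqq 4i^{2}+\ell$, I would argue by contradiction: suppose $g:N'\rightarrow P'$ is an $\mathcal{O}_{\ell}^{k}$-regular map with $[g:N'\rightarrow P']=[f:N\rightarrow P]$ in $\Omega_{4q}^{h-eq}$. Since $\mathcal{C}_{2i}$ is a homomorphism on $\Omega_{4q}^{h-eq}$, $\mathcal{C}_{2i}([g:N'\rightarrow P'])=\mathcal{C}_{2i}([f:N\rightarrow P])\neq 0$; by the same factorization through the normal invariant $j^{P'}$, $\mathbf{c}_{\mathbb{Z}}(\Sigma^{2i},\tau_{N'}-g^{\ast}(\tau_{P'}))$ is not torsion, in particular it does not vanish. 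On the other hand, Proposition 4.2 applied with corank index $2i$ and $n=p=4q$ has hypothesis $\tfrac12(2i)^{2}(2i-1)\geqq n+\ell$, i.e. exactly $4i^{3}-2i^{2}\geqq 4q+\ell$, together with $k\geqq 4q+\ell+1=p+\ell+1$; hence $\Sigma^{2i}(4q,4q)\subset W_{\ell+1}^{k}(4q,4q)$. Then Corollary 4.4(ii), applied with $m=0$ ($N'$ and $\tau_{N'}-g^{\ast}\tau_{P'}$ orientable, $n-p=0$ and $2i$ even, Thom polynomial nonvanishing), says $g$ is not homotopic to any $\mathcal{O}_{\ell}^{k}$-regular map — contradicting the choice of $g$. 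Therefore no $\mathcal{O}_{\ell}^{k}$-regular map lies in the cobordism class of $f$, which is the second assertion.

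The hard part is not analytic but bookkeeping. First, one must recognize that the relevant stratum is the corank-$2i$ locus $\Sigma^{2i}$ — forced by the half-size $i$ of the Pontrjagin-class determinant $\mathbf{c}_{\mathbb{Z}}(\Sigma^{2i},-)$ — and verify that Proposition 4.2's numerical condition for $\Sigma^{2i}$ in dimensions $n=p=4q$ collapses precisely to $4i^{3}-2i^{2}\geqq 4q+\ell$, while $k\geqq 4q+\ell+1$ matches $k\geqq p+\ell+1$. Second, one must use that $\mathcal{C}_{2i}$, hence the non-torsion-ness it detects, is a genuine $\Omega_{4q}^{h-eq}$-cobordism invariant rather than merely a bordism invariant of the underlying map; this is exactly what the factorization of $\mathcal{C}_{2i}$ through $j^{P}$ supplies, and it is the point I would check most carefully (including that the reduction to simply connected targets does not disturb it). Everything else is already delivered by formula (5.1) and Corollary 4.4.
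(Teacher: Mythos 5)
Your proposal is correct and follows exactly the route the paper intends; the paper itself writes no argument for Theorem 5.1 beyond the one-line attribution ``follows from (5.1), Proposition 4.2 and Corollary 4.4,'' and you have reconstructed precisely that chain: produce $[f]$ with $\mathcal{C}_{2i}([f])\neq 0$ from (5.1), read off non-torsion of $\mathbf{c}_{\mathbb{Z}}(\Sigma^{2i},\tau_N-f^{\ast}\tau_P)$ from the factorization through $j^P$, match Proposition 4.2's numerical hypothesis ($\tfrac12(2i)^2(2i-1)=4i^3-2i^2\geq 4q+\ell$ with $n=p=4q$, $k\geq p+\ell+1$), and close the contradiction with Corollary 4.4(ii) at $m=0$. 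The point you flag at the end --- that the Thom-polynomial factorization of $\mathcal{C}_{2i}$ must apply to the arbitrary cobordant representative $g:N'\to P'$, not merely to a simply connected one --- is indeed the only place where one leans on [An5] beyond what this paper spells out, and you identify it appropriately.
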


We can prove the following theorem using Theorem 5.1 by applying the same
argument in the proof of [An5, Theorem 0.2]. However, Theorem 1.2 is very
important in the following and the situation is rather different. Therefore,
we give its proof.

\begin{theorem}
Let $\ell$, $q$ and $i$ be given integers with $\ell\geqq0$\ and $q\geqq
i^{2}$. Let $k\geqq8q+\ell+1$.\ If $4i^{3}-2i^{2}\geqq4q+\ell\geqq4i^{2}+\ell
$, then there exists a closed connected oriented $8q$-manifold $P$ and a
homotopy equivalence $f:P\rightarrow P$\ of degree $1$\ such that
$\mathbf{c}_{\mathbb{Z}}(\Sigma^{2i},\tau_{P}-f^{\ast}(\tau_{P}))\neq0$ and
that $f$ is not cobordant in $\Omega_{8q}^{h-eq}$ to any $\mathcal{O}_{\ell
}^{k}$-regular homotopy equivalence of degree $1$.
\end{theorem}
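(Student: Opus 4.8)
The plan is to build a self-equivalence out of the homotopy equivalence furnished by Theorem 5.1 via the usual ``product-and-interchange'' device, and then to run, one dimension-doubling higher, the cobordism-invariant argument used for Theorem 5.1 and in [An5, Theorem 0.2]. First apply Theorem 5.1 with the given $\ell$, $q$, $i$, $k$ --- legitimate, since $k\geqq 8q+\ell+1\geqq 4q+\ell+1$ and the inequalities $4i^{3}-2i^{2}\geqq 4q+\ell\geqq 4i^{2}+\ell$ are exactly its hypotheses; it yields closed connected oriented $4q$-manifolds $N_{1}$, $N_{2}$, which we may take simply connected by the surgery on $\pi_{1}$ recalled in Section 5, and a degree-$1$ homotopy equivalence $\varphi:N_{1}\to N_{2}$ with $\mathbf{c}_{\mathbb{Z}}(\Sigma^{2i},\tau_{N_{1}}-\varphi^{\ast}\tau_{N_{2}})$ not a torsion element. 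Choose a smooth degree-$1$ homotopy inverse $\psi:N_{2}\to N_{1}$ of $\varphi$, set $P:=N_{1}\times N_{2}$ (a closed connected oriented $8q$-manifold), let $\sigma:N_{2}\times N_{1}\to N_{1}\times N_{2}$ be the interchange diffeomorphism, and define $f:=\sigma\circ(\varphi\times\psi):P\to P$, i.e.\ $f(x,y)=(\psi(y),\varphi(x))$. Since $\varphi\times\psi$ is a homotopy equivalence ($\psi$ being one with homotopy inverse $\varphi$) and $\sigma$ is a diffeomorphism, $f$ is a homotopy equivalence of degree $(\deg\varphi)(\deg\psi)(\deg\sigma)=1$.

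For the characteristic-class assertion, from $\tau_{P}=\mathrm{pr}_{1}^{\ast}\tau_{N_{1}}\oplus\mathrm{pr}_{2}^{\ast}\tau_{N_{2}}$ and $f^{\ast}\tau_{P}=\mathrm{pr}_{1}^{\ast}\varphi^{\ast}\tau_{N_{2}}\oplus\mathrm{pr}_{2}^{\ast}\psi^{\ast}\tau_{N_{1}}$ one obtains $\tau_{P}-f^{\ast}\tau_{P}=\mathrm{pr}_{1}^{\ast}\xi_{1}\oplus\mathrm{pr}_{2}^{\ast}\xi_{2}$ with $\xi_{1}=\tau_{N_{1}}-\varphi^{\ast}\tau_{N_{2}}$, $\xi_{2}=\tau_{N_{2}}-\psi^{\ast}\tau_{N_{1}}$, and $w_{1}(\tau_{P}-f^{\ast}\tau_{P})=0$, so $\mathbf{c}_{\mathbb{Z}}(\Sigma^{2i},\tau_{P}-f^{\ast}\tau_{P})$ is defined. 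Working rationally and using the Künneth isomorphism together with the Whitney formula $P_{j}(\tau_{P}-f^{\ast}\tau_{P})=\sum_{a+b=j}P_{a}(\xi_{1})\otimes P_{b}(\xi_{2})$, the component of the $i\times i$ determinant $\mathbf{c}_{\mathbb{Z}}(\Sigma^{2i},\tau_{P}-f^{\ast}\tau_{P})$ lying in $H^{4i^{2}}(N_{1};\mathbb{Q})\otimes H^{0}(N_{2};\mathbb{Q})$ equals $\mathbf{c}_{\mathbb{Z}}(\Sigma^{2i},\xi_{1})\otimes 1$ (every other Künneth component carries strictly positive degree on the $N_{2}$-factor). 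Since $\mathbf{c}_{\mathbb{Z}}(\Sigma^{2i},\xi_{1})$ is not torsion, that component is nonzero, so $\mathbf{c}_{\mathbb{Z}}(\Sigma^{2i},\tau_{P}-f^{\ast}\tau_{P})$ is not torsion, in particular $\neq 0$.

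For the second (cobordism) assertion, first note that from $4i^{3}-2i^{2}=\frac{1}{2}(2i)^{2}(2i-1)\geqq 4q+\ell$ and Proposition 4.2 (with $n=p=4q$, kernel rank $2i$) we get $\Sigma^{2i}(4q,4q)\subset W_{\ell+1}^{k}(4q,4q)$, whence Proposition 4.3 with $m=4q$ gives $\Sigma^{2i}(8q,8q)\subset W_{\ell+1}^{k}(8q,8q)$; the bound $k\geqq 8q+\ell+1$ covers the $k$-hypotheses of both propositions. Let $\mathcal{C}_{2i}:\Omega_{8q}^{h-eq}\to H_{8q-4i^{2}}(G/O;\mathbb{Q})$ be the cobordism homomorphism defined exactly as in Section 5 with $4q$ replaced by $8q$, which makes sense since $2q\geqq i^{2}$. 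If $g:M\to Q$ is any $\mathcal{O}_{\ell}^{k}$-regular degree-$1$ homotopy equivalence of closed oriented $8q$-manifolds, then, $\mathcal{O}_{\ell}^{k}(8q,8q)$ being open and disjoint from $W_{\ell+1}^{k}(8q,8q)\supset\Sigma^{2i}(8q,8q)$, a small perturbation of $g$ stays $\mathcal{O}_{\ell}^{k}$-regular and is transverse to $\Sigma^{2i}(M,Q)$ with empty preimage; hence the integer Thom polynomial $\mathbf{c}_{\mathbb{Z}}(\Sigma^{2i},\tau_{M}-g^{\ast}\tau_{Q})$ vanishes (it is homotopy invariant, and by transversality equals the dual of that empty preimage), so $\mathcal{C}_{2i}([g:M\to Q])=0$ --- after the routine reduction, by surgery on $\pi_{1}$, to a simply connected target, which leaves $\mathbf{c}_{\mathbb{Z}}(\Sigma^{2i},\cdot)$ unchanged since the Pontryagin classes it involves sit in degrees $\leqq 8i-4<8q$, below the range touched by that surgery. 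Since $\mathcal{C}_{2i}$ is a cobordism invariant, the theorem follows once we know $\mathcal{C}_{2i}([f:P\to P])\neq 0$.

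This last claim is the hard part. By the projection-formula computation preceding (5.1), $\mathcal{C}_{2i}([f:P\to P])=\mathbf{c}_{\mathbb{Q}}(\Sigma^{2i},\eta_{G/O})\cap\alpha_{\ast}[P]$, where $\alpha=j^{P}([f:P\to P]):P\to G/O$ is the normal invariant (defined, $P=N_{1}\times N_{2}$ being simply connected) and $\alpha^{\ast}\eta_{G/O}$ agrees rationally with the difference bundle $h^{\ast}(\tau_{P}-f^{\ast}\tau_{P})$ for a homotopy inverse $h$ of $f$. The key structural fact is that the normal invariant of the split self-map $f=\sigma\circ(\varphi\times\psi)$ decomposes, via the $H$-space (infinite-loop) structure on $G/O$, as the sum of the pullbacks along the two projections of the normal invariants of $\varphi$ and of $\psi$; tracking this through the Künneth decomposition of $H_{\ast}(P;\mathbb{Q})$ as in the second paragraph, the summand of $\mathcal{C}_{2i}([f:P\to P])$ coming from $H^{4i^{2}}(N_{1};\mathbb{Q})\otimes H^{0}(N_{2};\mathbb{Q})$ is carried by $\mathcal{C}_{2i}([\varphi:N_{1}\to N_{2}])$, nonzero by Theorem 5.1. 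Making this bookkeeping precise --- verifying that the $H$-space addition does not cancel that summand and that the $[N_{2}]$-factor survives the cap product --- is the principal technical obstacle; everything else is a routine product computation or a verbatim transcription of the arguments for Theorem 5.1 and [An5, Theorem 0.2].
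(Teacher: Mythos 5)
Your construction and your main computation coincide with the paper's: the paper also takes the $4q$-dimensional homotopy equivalence $f:N\rightarrow P$ supplied by Theorem 5.1, forms the product $N\times P$ with the interchange self-map $g(x,y)=(f^{-1}(y),f(x))$, and isolates the $H^{4i^{2}}(N;\mathbb{Z})\otimes H^{0}(P;\mathbb{Z})$ Künneth component of $\mathbf{c}_{\mathbb{Z}}(\Sigma^{2i},\tau_{N\times P}-g^{\ast}(\tau_{N\times P}))$, which modulo torsion equals $\mathbf{c}_{\mathbb{Z}}(\Sigma^{2i},\tau_{N}-f^{\ast}(\tau_{P}))\otimes1$ and hence does not vanish. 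Up to and including your second paragraph you have reproduced the paper's proof.

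Where you diverge is the endgame, and there you have made the problem harder than the paper does. Having checked $\Sigma^{2i}(8q,8q)\subset W_{\ell+1}^{k}(8q,8q)$ via Propositions 4.2 and 4.3 (exactly as in your third paragraph) and the non-vanishing of the Thom polynomial, the paper simply invokes Corollary 4.4 applied to $g:N\times P\rightarrow N\times P$ and concludes that $g$ is not homotopic to any $\mathcal{O}_{\ell}^{k}$-regular map; this is also the only form of the conclusion that is used afterwards, in the proof of Theorem 1.3. Your fourth paragraph --- the attempt to prove $\mathcal{C}_{2i}([f:P\rightarrow P])\neq0$ by decomposing the normal invariant of the split self-map via the $H$-space structure on $G/O$ --- has no counterpart in the paper, and it is precisely there that you leave your ``principal technical obstacle'' open. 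That step is unnecessary for the homotopy-level conclusion, which follows at once from Corollary 4.4; and for the cobordism-level phrasing in the statement the paper offers nothing beyond the mechanism you already set up in your third paragraph (an $\mathcal{O}_{\ell}^{k}$-regular representative would force the Thom polynomial, hence the invariant $\mathcal{C}_{2i}$, to vanish). So the substance of your argument is correct and matches the paper; the unresolved bookkeeping you flag at the end is a self-imposed detour rather than a step the paper actually carries out, and you should replace it by a direct appeal to Corollary 4.4.
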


\begin{proof}
It follows from Theorem 5.1 that there exists a homotopy equivalence
$f:N\rightarrow P$\ of degree $1$ between $4q$-manifolds such that
$\mathbf{c}_{\mathbb{Z}}(\Sigma^{2i},\tau_{N}-f^{\ast}(\tau_{P}))$\ is not a
torsion element. Let $f^{-1}:P\rightarrow N$\ be a homotopy inverse of $f$.
Define $g:N\times P\rightarrow N\times P$ by $g(x,y)=(f^{-1}(y),f(x))$. We
have $k\geq\dim N\times P+\ell+1$. If we prove\ that $\mathbf{c_{\mathbb{Z}}%
}(\Sigma^{2i},\tau_{N\times P}-g^{\ast}(\tau_{N\times P}))$ does not vanish,
then, by Corollary 4.4, $g$\ is not\ homotopic to any $\mathcal{O}_{\ell}^{k}%
$-regular map. We set $\xi=\tau_{N\times P}-g^{\ast}(\tau_{N\times P}%
)=\tau_{N}\times\tau_{P}-f^{\ast}(\tau_{P})\times(f^{-1})^{\ast}(\tau_{N})$.
Then%
\begin{align*}
p_{j}(\xi)  &  =\sum_{s+t=j}p_{s}(\tau_{N}\times\tau_{P})\overline{p}%
_{t}(f^{\ast}(\tau_{P})\times(f^{-1})^{\ast}(\tau_{N}))\\
&  =\sum_{s+t=j}\sum_{%
\begin{array}
[c]{l}%
s_{1}+s_{2}=s\\
t_{1}+t_{2}=t
\end{array}
}p_{s_{1}}(\tau_{N})\overline{p}_{t_{1}}(f^{\ast}(\tau_{P}))\otimes p_{s_{2}%
}(\tau_{P})\overline{p}_{t_{2}}((f^{-1})^{\ast}(\tau_{N}))
\end{align*}
modulo torsion in $H^{\ast}(N;\mathbb{Z})\otimes H^{\ast}(P;\mathbb{Z})$. The
term of $p_{j}(\xi)$\ which lies in $H^{4j}(N;\mathbb{Z})\otimes
H^{0}(P;\mathbb{Z})$\ is equal modulo torsion to%
\[
\sum_{s+t=j}p_{s}(\tau_{N})\overline{p}_{t}(f^{\ast}(\tau_{P}))\otimes
1=p_{j}(\tau_{N}-f^{\ast}(\tau_{P}))\otimes1.
\]
Hence, we have that $\mathbf{c_{\mathbb{Z}}}(\Sigma^{2i},\tau_{N\times
P}-g^{\ast}(\tau_{N\times P})$ is equal to the sum of $\mathbf{c_{\mathbb{Z}}%
}(\Sigma^{2i},\tau_{N}-f^{\ast}(\tau_{P}))\otimes1$ and the other term which
lies in $\Sigma_{j=1}^{i^{2}}H^{4i^{2}-4j}(N;\mathbb{Z})\otimes H^{4j}%
(P;\mathbb{Z})$\ modulo torsion. Since $\mathbf{c_{\mathbb{Z}}}(\Sigma
^{2i},\tau_{N}-f^{\ast}(\tau_{P}))$\ does not vanish, it follows that
$\mathbf{c_{\mathbb{Z}}}(\Sigma^{2i},\tau_{N\times P}-g^{\ast}(\tau_{N\times
P}))$\ does not vanish. This completes the proof.
\end{proof}

We are now ready to prove Theorem 1.3.

\begin{proof}
[Proof of Theorem 1.3]In the proof $k$ refers to a sufficiently large integer.
Let $i_{0}=2$, which is the smallest integer such that $4i^{3}-2i^{2}%
\geqq4i^{2}$ with $q=4$ and $\ell=8$. Then we have, by Theorem 5.2, a closed
connected oriented $8\cdot4$-manifold $P_{0}$ and a homotopy equivalence
$f_{0}:P_{0}\rightarrow P_{0}$\ of degree $1$\ such that $\mathbf{c}%
_{\mathbb{Z}}(\Sigma^{4},\tau_{P_{0}}-f_{0}^{\ast}(\tau_{P_{0}}))\neq0$ and
that $f_{0}$ is not homotopic to any $\mathcal{O}_{8}^{k}$-regular map. By
Remark 3.8 there exists an integer $\ell$ such that $f_{0}$ is homotopic to an
$\mathcal{O}_{\ell}^{k}$-regular map. Let $\ell_{1}$ be such a smallest integer.

We assume the following (A-$t$) for an integer $t\geqq0$, where $\ell_{0}=8$.

(A-$t$) We have constructed integers $\ell_{t}$, $\ell_{t+1}$, $i_{t}$, a
closed oriented $8\cdot i_{t}^{2}$-manifold $P_{t}$ and an $\mathcal{O}%
_{\ell_{t+1}}^{k}$-regular homotopy equivalence $f_{t}:P_{t}\rightarrow P_{t}%
$\ of degree $1$\ such that $4i_{t}^{3}-2i_{t}^{2}\geqq4i_{t}^{2}+\ell_{t}$,
$\ell_{t+1}>\ell_{t}$, $\mathbf{c}_{\mathbb{Z}}(\Sigma^{2i_{t}},\tau_{P_{t}%
}-f_{t}^{\ast}(\tau_{P_{t}}))\neq0$ and that $f_{t}$ is not homotopic to any
$\mathcal{O}_{\ell_{t}}^{k}$-regular map.

Under the assumption (A-$t$) we prove (A-$(t+1)$) with $\ell_{t+1}<\ell_{t+2}%
$. Let $i_{t+1}$ be the smallest integer among the integers $i>0$ with
$4i^{3}-2i^{2}\geqq4i^{2}+\ell_{t+1}$. Then it follows from Theorem 5.2 that
there exist a closed connected oriented $8\cdot i_{t+1}^{2}$-manifold
$P_{t+1}$ and a homotopy equivalence $f_{t+1}:P_{t+1}\rightarrow P_{t+1}$\ of
degree $1$\ such that $\mathbf{c}_{\mathbb{Z}}(\Sigma^{2i_{t}},\tau_{P_{t+1}%
}-f_{t+1}^{\ast}(\tau_{P_{t+1}}))\neq0$ and that $f_{t+1}$ is not homotopic to
any $\mathcal{O}_{\ell_{t+1}}^{k}$-regular map. It follows Remark 3.8 that
there exists an integer $\ell$ such that $f_{t+1}$ is homotopic to an
$\mathcal{O}_{\ell}^{k}$-regular map. Let $\ell_{t+2}$ be the smallest integer
among those integers $\ell$. Hence, we have $\ell_{t+2}>\ell_{t+1}$. This
proves (A-$(t+1)$).

Thus we have defined the sequences $\{i_{t}\}$, $\{ \ell_{t}\}$, closed
connected oriented manifolds $\{P_{t}\}$\ of dimensions $\{8\cdot i_{t}^{2}\}$
and homotopy equivalences $\{f_{t}\}$ of degree $1$ which satisfy the above properties.

Given a positive integer $d$, let%
\begin{align*}
P  &  =P_{0}\times P_{1}\times P_{2}\times\cdots\times P_{d},\\
F_{t}  &  =id_{P_{0}}\times\cdots\times id_{P_{t-1}}\times f_{t}\times
id_{P_{t+1}}\times\cdots\times id_{P_{d}}\text{ \ }(0\leqq t\leqq d),
\end{align*}
and $p=\sum_{t=0}^{d}8\cdot i_{t}^{2}$. We show that $F_{t}\notin
\mathfrak{h}_{\ell_{t}}(P)$ and $F_{t}\in\mathfrak{h}_{\ell_{t+1}}(P)$. Let
$q_{t}:P\rightarrow P_{t}$ be the canonical projection. Then the stable
tangent bundle $\tau_{P}$ is isomorphic to $q_{0}^{\ast}(\tau_{P_{0}})\oplus
q_{1}^{\ast}(\tau_{P_{1}})\oplus\cdots\oplus q_{d}^{\ast}(\tau_{P_{d}})$.
Hence, $\tau_{P}-F_{t}^{\ast}(\tau_{P})$ is equal to%
\begin{align*}
&  q_{0}^{\ast}(\tau_{P_{0}})\oplus q_{1}^{\ast}(\tau_{P_{1}})\oplus
\cdots\oplus q_{d}^{\ast}(\tau_{P_{d}})\\
&  -((q_{0}\circ F_{t})^{\ast}(\tau_{P_{0}})\oplus(q_{1}\circ F_{t})^{\ast
}(\tau_{P_{1}})\oplus\cdots\oplus(q_{d}\circ F_{t})^{\ast}(\tau_{P_{d}}))\\
&  =q_{0}^{\ast}(\tau_{P_{0}})\oplus q_{1}^{\ast}(\tau_{P_{1}})\oplus
\cdots\oplus q_{d}^{\ast}(\tau_{P_{d}})\\
&  -(q_{0}^{\ast}(\tau_{P_{0}})\oplus\cdots\oplus q_{t-1}^{\ast}(\tau
_{P_{t-1}})\oplus(f_{t}\circ q_{t})^{\ast}(\tau_{P_{t}})\oplus\cdots\oplus
q_{d}^{\ast}(\tau_{P_{d}}))\\
&  =q_{t}^{\ast}(\tau_{P_{t}})-(f_{t}\circ q_{t})^{\ast}(\tau_{P_{t}})\\
&  =q_{t}^{\ast}((\tau_{P_{t}})-f_{t}^{\ast}(\tau_{P_{t}})).
\end{align*}
This shows that
\begin{align*}
\mathbf{c}_{\mathbb{Z}}(\Sigma^{2i_{t}},\tau_{P}-F_{t}^{\ast}(\tau_{P}))  &
=\mathbf{c}_{\mathbb{Z}}(\Sigma^{2i_{t}},q_{t}^{\ast}((\tau_{P_{t}}%
)-f_{t}^{\ast}(\tau_{P_{t}}))\\
&  =q_{t}^{\ast}(\mathbf{c}_{\mathbb{Z}}(\Sigma^{2i_{t}},\tau_{P_{t}}%
-f_{t}^{\ast}(\tau_{P_{t}})),
\end{align*}
which does not vanish in $H^{2i_{t}^{2}}(P;\mathbb{Z})$ since $\mathbf{c}%
_{\mathbb{Z}}(\Sigma^{2i_{t}},\tau_{P_{t}}-f_{t}^{\ast}(\tau_{P_{t}}))\neq0$
and since $q_{t}^{\ast}:H^{2i_{t}^{2}}(P_{t};\mathbb{Z})\rightarrow
H^{2i_{t}^{2}}(P;\mathbb{Z})$ is injective. Furthermore, it follows from
Proposition 4.3 that $\Sigma^{2i_{t}}(p,p)\subset W_{\ell+1}^{k}(p,p)$ and
from Corollary 4.4 that $F_{t}$ is not homotopic to any $\mathcal{O}_{\ell
_{t}}^{k}$-regular map. However, since $f_{t}$ is homotopic to an
$\mathcal{O}_{\ell_{t+1}}^{k}$-regular map, $F_{t}$ is also homotopic to an
$\mathcal{O}_{\ell_{t+1}}^{k}$-regular map. This proves the theorem.
\end{proof}

We prepare further results which are necessary to study the filtration in
(1.1). The assertions (i) and (ii) in the following theorem have been proved
in [An2, Theorem 4.8] and [An4, Theorem 4.1] respectively, which are
applications of the relative homotopy principles for $\mathcal{O}$-regular maps.

\begin{theorem}
Let $P$ be orientable and $f:P\rightarrow P$ be a smooth map.

(i) A map $f$ is homotopic to a fold-map if and only if $\tau_{P}$ is
isomorphic to $f^{\ast}(\tau_{P})$.

(ii) If a map $f$ is $\Omega^{1}$-regular, then $f$ is homotopic to an
$\Omega^{(1,1,0)}$-regular map.
\end{theorem}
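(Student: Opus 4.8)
The plan is to deduce both parts from the relative homotopy principle of Theorem 1.1, applied to a carefully chosen $\mathcal{K}$-invariant open set of jets. Since $f:P\rightarrow P$ we have $n=p$ (and $p\geqq2$, so $n=p\geqq2$), whence $\Sigma^{n-p+1,0}=\Sigma^{1,0}$. For (i) I would take $\mathcal{O}(n,n)=\Omega^{(1,0)}(n,n)$, the regular jets together with the fold jets $\Sigma^{1,0}(n,n)$; for (ii) I would take $\mathcal{O}(n,n)=\Omega^{(1,1,0)}(n,n)$, the regular jets together with the fold jets and the cusp jets $\Sigma^{1,1,0}(n,n)$. In either case, with $k$ sufficiently large, this set is open (Lemma 2.2), is $\mathcal{K}$-invariant (Lemma 2.3), contains every regular jet, and contains $\Sigma^{n-p+1,0}(n,n)=\Sigma^{1,0}(n,n)$, so Theorem 1.1 applies: by that theorem with $C=\emptyset$ (see also Corollary 3.7), $f$ is homotopic to an $\mathcal{O}$-regular map if and only if $\mathcal{O}(P,P)\rightarrow P$ admits a continuous section whose base map is homotopic to $f$.

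For (i), sufficiency is then immediate: an isomorphism $\Phi:\tau_{P}\rightarrow f^{\ast}(\tau_{P})$ is, in particular, a bundle homomorphism $TP\rightarrow f^{\ast}(TP)$ that is everywhere of rank $p$, hence a section over $f$ into the regular $1$-jets, which extends over the affine fibres of $\pi_{1}^{k}$ (the regular $k$-jets being $(\pi_{1}^{k})^{-1}$ of the regular $1$-jets) to a section of $\Omega^{(1,0)}(P,P)$; Theorem 1.1 then produces an $\Omega^{(1,0)}$-regular map, i.e. a fold-map, homotopic to $f$. The real content of (i) is the necessity. For this one uses the fold normal form: over the fold locus $S$ the differential $df$ has a line kernel bundle $\kappa$ and a line cokernel bundle $\lambda$, the intrinsic second derivative of $f$ along $S$ (nondegenerate, by the fold condition) relates $\kappa$ and $\lambda$, and since $df|_{S}$ identifies $TS$ with a complement of $\lambda$ in $f^{\ast}(TP)|_{S}$ one gets an isomorphism of $\tau_{P}$ and $f^{\ast}(\tau_{P})$ over a tubular neighbourhood of $S$; off that neighbourhood $df$ is itself an isomorphism, and one must patch these together. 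I expect the patching --- reconciling the near-$S$ isomorphism with $df$ across the two sides of the codimension-one locus $S$, controlling the clutching and destabilization data so as to land in an honest (not merely stable) isomorphism of rank-$p$ bundles --- to be the main obstacle in (i).

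For (ii): since $f$ is $\Omega^{1}$-regular, $j^{k}f$ is already a section of $\Omega^{1}(P,P)$ over $f$, and $\Omega^{(1,1,0)}(n,n)$ is obtained from $\Omega^{1}(n,n)$ by removing the Boardman manifolds $\Sigma^{J}(n,n)$ with $J\geqq(1,1,1)$; by Proposition 2.1, $\Sigma^{1,1,1}(n,n)$ has codimension $\geqq|n-p|+3=3$ in $J^{k}(n,n)$, hence so do all these strata. After perturbing $f$ to be generic among $\Omega^{1}$-regular maps by Thom transversality --- which keeps $f$ $\Omega^{1}$-regular because $\Omega^{1}(n,n)$ is open --- the locus $\Sigma^{1,1,1}(f)$ of points where the Thom--Boardman symbol of $f$ is $\geqq(1,1,1)$ becomes a stratified subset of $P$ of codimension $\geqq3$; one then wishes to homotope $j^{k}f$, as a section of $\pi_{N}^{k}:J^{k}(P,P)\rightarrow P$, so that it avoids $\Sigma^{1,1,1}(P,P)$ and thus lands in $\Omega^{(1,1,0)}(P,P)$, after which Theorem 1.1 yields the required $\Omega^{(1,1,0)}$-regular map. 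The hard part is this deformation: its primary obstruction is the Poincar\'{e}--Lefschetz dual of the swallowtail cycle, namely the Thom polynomial $\mathbf{c}(\Sigma^{1,1,1,0},\tau_{P}-f^{\ast}(\tau_{P}))$, together with the analogous classes of the deeper strata $\Sigma^{1,1,1,1,0},\dots$, and one must show these are killed. I would attempt this by a local surgery supported near the codimension-two cusp stratum, in the spirit of Levine's elimination of cusps; I expect this cohomological and surgical step, rather than the appeal to the homotopy principle, to carry the real content of (ii).
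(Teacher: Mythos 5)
The paper does not actually prove this theorem: it states in the sentence preceding it that assertions (i) and (ii) ``have been proved in [An2, Theorem 4.8] and [An4, Theorem 4.1] respectively,'' and records only that those proofs are applications of the relative homotopy principle for $\mathcal{O}$-regular maps. So at the level of strategy your proposal is aligned with what the paper says---reduce each claim, via Theorem 1.1 and Corollary 3.7 applied to $\Omega^{(1,0)}(n,n)$ and $\Omega^{(1,1,0)}(n,n)$, to the existence of a section of the corresponding jet bundle---but the paper leaves the substantive steps to the cited references, and those are precisely the places where your sketch leaves gaps.

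Beyond the gaps you yourself flag, there is one genuine error I want to highlight. In your treatment of (i) you claim the sufficiency direction is ``immediate'' because an isomorphism $\Phi:\tau_{P}\rightarrow f^{\ast}(\tau_{P})$ is ``in particular a bundle homomorphism $TP\rightarrow f^{\ast}(TP)$ that is everywhere of rank $p$.'' This is not so: as defined in Section 4, $\tau_{P}$ is the \emph{stable} tangent bundle, so $\tau_{P}\cong f^{\ast}(\tau_{P})$ means only that $TP\oplus\epsilon^{N}\cong f^{\ast}(TP)\oplus\epsilon^{N}$ for some $N$. For rank exactly $p=\dim P$ this does not destabilize: the Euler class of $TP-f^{\ast}(TP)$ is the primary obstruction, and when $\chi(P)\neq 0$ and $\deg f\neq 1$ it is nonzero, so there is no fibrewise isomorphism $TP\to f^{\ast}(TP)$ and hence no section into the regular $1$-jets. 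What actually happens is that the fold jets in $\Omega^{(1,0)}(P,P)$ are needed precisely to absorb this Euler-class defect; producing a section of $\Omega^{(1,0)}(P,P)$ (rather than of $\Sigma^{0}(P,P)$) from the stable isomorphism is the content of [An3]/[An2] and is not a one-line destabilization. Conversely, in the necessity direction a fold map $g$ gives naturally a stable isomorphism $TP\oplus\epsilon\cong g^{\ast}TP\oplus\epsilon$ built from $dg$ together with the fold data, not (in general) an unstable one, so the ``patching'' you describe should be framed after a single stabilization. Your description of (ii) as an elimination of the cusp stratum controlled by Thom polynomials of $\Sigma^{1,1,1}$ and deeper is plausibly the right shape of argument, but since the paper only cites [An4, Theorem 4.1] there is no in-paper proof to check it against, and the surgical step you defer is exactly where the substance would lie.
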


Let $V(n,p)$ be an algebraic set of $J^{k}(n,p)$\ which is invariant with
respect to the actions of local diffeomorphisms of $(\mathbb{R}^{n},0)$\ and
$(\mathbb{R}^{n},0)$ and Let $V(N,P)$\ be the subbundle of $J^{k}%
(N,P)$\ associated to $V(n,p)$. By [B-H] we have the fundamental class of
$V(N,P)$ under the coefficient group $\mathbb{Z}/2$, and have the Thom
polynomial $\mathbf{c}(V(n,p),\tau_{N}-f^{\ast}(\tau_{P}))$ of $V(N,P)$.

\begin{theorem}
Let $V(p,p)$ be as above. Let $P$ be orientable and $f:P\rightarrow P$ be a
smooth map.

(i) If $f$\ is a homotopy equivalence, then $\mathbf{c}(V(p,p),\tau
_{P}-f^{\ast}(\tau_{P}))$ vanishes.

(ii) $\mathbf{c}_{\mathbb{Z}}(W_{p}^{k}(p,p),\tau_{P}-f^{\ast}(\tau_{P}))=0$
for $p=5,6,7$ and%
\[
\mathbf{c}_{\mathbb{Z}}(W_{8}^{k}(8,8),\tau_{P}-f^{\ast}(\tau_{P}%
))=9P_{2}(\tau_{P}-f^{\ast}(\tau_{P}))+3P_{1}^{2}(\tau_{P}-f^{\ast}(\tau
_{P}))
\]
for $p=8$.

(iii) Let $2\leqq p\leqq8$. Then there exists a section $s$ of $\mathcal{O}%
_{p-1}^{k}(P,P)$ over $P$ with $\pi_{P}^{k}\circ s$ and $f$ being homotopic if
and only if $\mathbf{c}_{\mathbb{Z}}(W_{p}^{k}(p,p),\tau_{P}-f^{\ast}(\tau
_{P}))=0$.
\end{theorem}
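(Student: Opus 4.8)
The plan is to treat the three parts in turn: part (i) is immediate from homotopy invariance of Stiefel--Whitney classes, part (ii) is the quotation of the Thom polynomial computations of [O] and [F-R] organised by Propositions 2.1 and 4.2, and part (iii)---the substantive assertion---combines the homotopy principle of Theorem 1.2 with an obstruction-theoretic analysis of $\mathcal{O}_{p-1}^{k}(p,p)=J^{k}(p,p)\setminus W_{p}^{k}(p,p)$. For (i): the $\mathbb{Z}/2$ Thom polynomial $\mathbf{c}(V(p,p),\xi)$ of $V(N,P)$ is, by the Borel--Haefliger construction recalled just above the statement (together with [Po], [Ro]), a universal polynomial in the Stiefel--Whitney classes $w_{1}(\xi),w_{2}(\xi),\ldots$ of $\xi=\tau_{P}-f^{\ast}(\tau_{P})$; since $V(p,p)$ is a proper algebraic subset this polynomial is homogeneous of positive degree and hence has zero constant term. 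As Stiefel--Whitney classes of a closed manifold are homotopy invariants (Wu), a homotopy equivalence $f:P\rightarrow P$ satisfies $f^{\ast}(w(\tau_{P}))=w(\tau_{P})$, whence $w(f^{\ast}\tau_{P})=w(\tau_{P})$ and $w(\xi)=w(\tau_{P})\,w(f^{\ast}\tau_{P})^{-1}=1$; every positive-degree Stiefel--Whitney class of $\xi$ vanishes, and so does $\mathbf{c}(V(p,p),\xi)$.

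For (ii): I would invoke the explicit computations of the Thom polynomials of the $\mathcal{K}$-classes $W_{\ell+1}^{k}(n,n)$ in the dimensions $n\leqq 8$ carried out in [O] and [F-R], together with Propositions 2.1 and 4.2, which identify which Boardman strata $\Sigma^{i}(p,p)$ lie in $W_{p}^{k}(p,p)$ and, for $p\leqq 8$, pin down that $W_{p}^{k}(p,p)$ has no stratum of codimension $<p$, so that only its codimension-$p$ part contributes to a class in $H^{p}(P)$. The vanishing $\mathbf{c}_{\mathbb{Z}}(W_{p}^{k}(p,p),\xi)=0$ for $p=5,6,7$ then reflects the absence of Pontrjagin classes in degrees $5,6,7$ (those classes, which survive rationally for an equidimensional coorientable singularity locus, live only in degrees divisible by $4$), while in degree $8$ the computation of [O] and [F-R] produces $9P_{2}(\xi)+3P_{1}^{2}(\xi)$.

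For (iii): by Theorem 1.2 with $N=P$, $\ell=p-1$ (equivalently Corollary 3.6), a section $s$ of $\mathcal{O}_{p-1}^{k}(P,P)=J^{k}(P,P)\setminus W_{p}^{k}(P,P)$ over $P$ with $\pi_{P}^{k}\circ s$ homotopic to $f$ exists if and only if $f$ is homotopic to an $\mathcal{O}_{p-1}^{k}$-regular map, so it suffices to characterise the latter. If $f$ is homotopic to such a map $g$, then $(j^{k}g)^{-1}(W_{p}^{k}(P,P))=\varnothing$, and since the closure of the codimension-$p$ part of $W_{p}^{k}(P,P)$ is Poincar\'e dual to $\mathbf{c}_{\mathbb{Z}}(W_{p}^{k}(p,p),\tau_{P}-f^{\ast}(\tau_{P}))$ (by [Po], [Ro], [St], exactly as in Section 4), that class must vanish. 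Conversely, assume $\mathbf{c}_{\mathbb{Z}}(W_{p}^{k}(p,p),\tau_{P}-f^{\ast}(\tau_{P}))=0$ and consider lifting the section of the source projection $J^{k}(P,P)\rightarrow P$ determined up to homotopy by $f$ to a section of $\mathcal{O}_{p-1}^{k}(P,P)\rightarrow P$: since $J^{k}(p,p)$ is contractible and, by the codimension facts used in (ii), $\mathcal{O}_{p-1}^{k}(p,p)$ is $(p-2)$-connected with $\pi_{p-1}(\mathcal{O}_{p-1}^{k}(p,p))\cong\mathbb{Z}$ generated by the linking sphere of a smooth point of its codimension-$p$ (Morin $A_{p}$) stratum, the only obstruction to the lift lies in $H^{p}(P;\mathbb{Z})$ and is precisely the integral Thom polynomial $\mathbf{c}_{\mathbb{Z}}(W_{p}^{k}(p,p),\tau_{P}-f^{\ast}(\tau_{P}))$. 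As $\dim P=p$ there are no further obstructions, so the lift exists; and since the section space of $J^{k}(P,P)\rightarrow P$ is homotopy equivalent to the space of continuous maps $P\rightarrow P$, it covers $f$ up to homotopy, as required.

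The hard part is the structural input underlying (ii) and (iii): one must know that, in the range $p\leqq 8$, $W_{p}^{k}(p,p)$ genuinely has codimension $\geqq p$ (equivalently, there is no map germ of $\mathcal{K}$-codimension $\geqq p$ whose $k$-jet lies in a Boardman stratum of codimension $<p$, and there are no moduli of $\mathcal{K}$-orbits of codimension $\leqq p$), and that its codimension-$p$ part is connected and coorientable, so that $\pi_{p-1}$ of the complement is exactly $\mathbb{Z}$ and the primary obstruction is literally the integral Thom polynomial rather than merely being detected by it. This is precisely where Propositions 2.1 and 4.2 and the explicit singularity lists of [O] and [F-R] do the real work; once these are granted, what remains is routine obstruction theory together with the appeal to the homotopy principle.
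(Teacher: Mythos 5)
Your proof is correct and essentially matches the paper's own: part (i) follows from the homotopy invariance of Stiefel--Whitney classes (you cite Wu, where the paper instead invokes Spivak [Sp] to conclude that the spherical fibrations of $\tau_P$ and $f^*(\tau_P)$ agree, but either route yields $w(\tau_P - f^*\tau_P)=1$), part (ii) is the quotation of the integer Thom polynomial computations from [F-R], and part (iii) combines the homotopy principle (Corollary 3.6) with the obstruction-theoretic identification of $H^p(P;\pi_{p-1}(\mathcal{O}_{p-1}^k(p,p)))$ as the home of the single obstruction, just as the paper does. You also correctly flag as the ``hard part'' the structural input that $W_p^k(p,p)$ has codimension $\geq p$ for $p\leq 8$ and that $\pi_{p-1}(\mathcal{O}_{p-1}^k(p,p))\cong\mathbb{Z}$ so that the primary obstruction is literally the integer Thom polynomial; the paper likewise leaves this to [F-R] together with Mather's nice-dimensions result (cited as [MaVI], used to reduce to $\mathcal{K}$-simple singularities), so your proposal and the paper are in full agreement on where the substance lies.
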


\begin{proof}
(i) Let $S(\nu_{P})$ denote the spherical normal fiber space of $P$. It
follows from [Sp] that $S(\nu_{P})$ is equivalent to $f^{\ast}(S(\nu_{P}))$.
Hence, the associated spherical spaces of $\tau_{P}$ and $f^{\ast}(\tau_{P}%
)$\ are equivalent. In particular, the Stiefel-Whitney classes of $\tau
_{P}-f^{\ast}(\tau_{P})$ vanish.

(ii) If $p\leqq8$, then a map $f:P\rightarrow P$ is homotopic to a smooth map
with only $\mathcal{K}$-simple singularities by [MaVI]. According to [F-R],
the integer Thom polynomial of $W_{p}^{k}(p,p)$ is equal to the formula for
$p=8$ and vanish for $p=5,6,7$ in $H^{p}(P;\mathbb{Z)\approx Z}$.

(iii) It follows from the relative homotopy principle for $\mathcal{O}%
_{p-1}^{k}$-regular maps $P\rightarrow P$ that the primary obstruction in
$H^{p}(P;\pi_{p-1}(\mathcal{O}_{p-1}^{k}(p,p))$\ is the unique obstruction for
finding the required section. By an elementary argument we have%
\[
\pi_{p-1}(\mathcal{O}_{p-1}^{k}(p,p))\approx H_{p-1}(\mathcal{O}_{p-1}%
^{k}(p,p);\mathbb{Z})\approx H^{\dim W_{p}^{k}(p,p)}(W_{p}^{k}(p,p);\mathbb{Z}%
).
\]
\ This shows the assertion.
\end{proof}

Finally we study the filtration in (1.1) in the case of $P$ being orientable
and $p\leqq8$ by applying the homotopy principles in Theorems 1.2 and 5.3. We
have $\mathfrak{h}_{p}(P)=\mathfrak{h}(P)$.

\bigskip

\textbf{Examples.}

Case: $p\leqq3$; $\mathfrak{h}_{0}(P)\subset\mathfrak{h}_{1}(P)=\mathfrak{h}%
(P)$.

Since $P$ is parallelizable, $TP$ and $f^{\ast}(TP)$ are trivial. So a map
$f:P\rightarrow P$ is homotopic to a fold-map. We refer the reader to [Ru, 1].

Case: $p=4$; $\mathfrak{h}_{0}(P)\subset\mathfrak{h}_{1}(P)\subset
\mathfrak{h}_{2}(P)=\mathfrak{h}_{3}(P)\subset\mathfrak{h}_{4}(P)$.

It is known that $\mathbf{c}_{\mathbb{Z}}(\Sigma^{4};\tau_{P}-f^{\ast}%
(\tau_{P}))=P_{2}(\tau_{P}-f^{\ast}(\tau_{P}))$. If this class vanish, then
there exists a section $P\rightarrow\Omega^{1}(P,P)$ covering $f$, and hence
an $\Omega^{1}$-regular map by [F]. By Theorems 5.3 and 5.4 we obtain an
$\Omega^{(1,1,0)}$-regular map homotopic to $f$. It has been proved in [Ak]
that $\mathfrak{h}_{0}(P)\neq\mathfrak{h}(P)$ for $P=S^{3}\times S^{1}%
\#S^{2}\times S^{2}$.

Case: $5\leqq p\leqq7$; $\mathfrak{h}_{0}(P)\subset\mathfrak{h}_{1}%
(P)\subset\cdots\subset\mathfrak{h}_{p-1}(P)=\mathfrak{h}_{p}(P)$.

This follows from Theorems 1.2 and 5.4.

Case: $p=8$; $\mathfrak{h}_{0}(P)\subset\mathfrak{h}_{1}(P)\subset
\cdots\subset\mathfrak{h}_{7}(P)\subset\mathfrak{h}_{8}(P)$.

If $9P_{2}(\tau_{P}-f^{\ast}(\tau_{P}))+3P_{1}^{2}(\tau_{P}-f^{\ast}(\tau
_{P}))=0$, then the homotopy class of $f$ lies in $\mathfrak{h}_{7}(P)$ by
Theorems 1.2 and 5.4.

For more precise information we must investigate the obstructions for finding
sections in $\Gamma_{\mathcal{O}_{\ell}^{k}}(P,P)$ related to $W_{\ell+1}%
^{k}(p,p)$.

Department of Mathematical Sciences

Faculty of Science, Yamaguchi University

Yamaguchi 753-8512, Japan

E-mail: andoy@yamaguchi-u.ac.jp
\end{document}